\newtheorem{thm}{Theorem}[section]
\newtheorem{cor}[thm]{Corollary}
\newtheorem{lem}[thm]{Lemma}
\newtheorem{prop}[thm]{Proposition}
\theoremstyle{definition}
\newtheorem{defn}[thm]{Definition}
\theoremstyle{remark}
\newtheorem{rem}[thm]{Remark}
\newtheorem{question}[thm]{Question}
\numberwithin{equation}{section}
\newcommand{\norm}[1]{\left\Vert#1\right\Vert}
\newcommand{\abs}[1]{\left\vert#1\right\vert}
\newcommand{\set}[1]{\left\{#1\right\}}
\newcommand{\dbar}{\bar\partial}
\newcommand{\ddbar}{\partial\bar\partial}
\newcommand{\sff}{\mathrm{I\!I}}
\DeclareMathOperator{\range}{Range}
\DeclareMathOperator{\re}{Re}
\DeclareMathOperator{\im}{Im}
\DeclareMathOperator{\supp}{supp}
\DeclareMathOperator{\dist}{dist}
\DeclareMathOperator{\Span}{span}
\DeclareMathOperator{\Hess}{Hess}
\begin{document}

\title[The Diederich-Forn{\ae}ss Index on $C^2$ Domains in Hermitian Manifolds]{The Strong Diederich-Forn{\ae}ss Index on $C^2$ Domains in Hermitian Manifolds}%
\author{Phillip S. Harrington}%
\address{SCEN 309, 1 University of Arkansas, Fayetteville, AR 72701}%
\email{psharrin@uark.edu}%

\subjclass[2010]{32U10, 32T35, 32Q28}

\begin{abstract}
  For a relatively compact Stein domain $\Omega$ with $C^2$ boundary in a Hermitian manifold $M$, we consider the strong Diederich-Forn{\ae}ss index, denoted $DF(\Omega)$: the supremum of all exponents $0<\eta<1$ such that eigenvalues of the complex Hessian of $-(-\rho)^\eta$ are bounded below by some positive multiple of $(-\rho)^\eta$ on $\Omega$ for some $C^2$ defining function $\rho$.  We will show that $DF(\Omega)$ is completely characterized by the existence of a Hermitian metric with curvature terms satisfying a certain inequality when restricted to the null-space of the Levi-form.
\end{abstract}
\maketitle

\tableofcontents


\section{Introduction}

Let $M$ be a Hermitian manifold of dimension $n\geq 2$ and let $\Omega\subset M$ be a relatively compact domain with $C^2$ boundary satisfying the strong Oka property: there exists a $C^2$ defining function $\rho$ for $\Omega$ such that
\begin{equation}
\label{eq:Strong_Oka_Property}
  i\ddbar(-\log(-\rho))\geq C\omega
\end{equation}
on $\Omega$ for some constant $C>0$.  We say that $0<\eta<1$ is a strong Diederich-Forn{\ae}ss exponent for $\Omega$ if there exists a $C^2$ defining function $\rho$ for $\Omega$ and a constant $C>0$ such that
\begin{equation}
\label{eq:DF_Index}
  i\ddbar(-(-\rho)^\eta)\geq \eta C(-\rho)^\eta\omega
\end{equation}
on $\Omega$, where $\omega$ denotes the K\"ahler form for the Hermitian metric on $M$.  The strong Diederich-Forn{\ae}ss index for $\Omega$, denoted $DF(\Omega)$, is the supremum over all strong Diederich-Forn{\ae}ss exponents for $\Omega$.  We note that this index is strong in the sense of both \cite{Har22} (we require $\rho$ to be at least $C^2$) and \cite{AdYu21} (we require $-(-\rho)^\eta$ to be strictly plurisubharmonic).  We include the factor of $\eta$ in the right-hand side of \eqref{eq:DF_Index} so that \eqref{eq:DF_Index} implies that $\rho$ satisfies \eqref{eq:Strong_Oka_Property} on $\Omega$ for the same value of $C$.  The relationship between the strong Oka property and the strong Diederich-Forn{\ae}ss index has been studied in several sources: see \cite{HaSh07} or \cite{FuSh18} and the references therein.

The strong Diederich-Forn{\ae}ss index first appeared in \cite{DiFo77b}, in which Diederich and Forn{\ae}ss proved that $DF(\Omega)>0$ for any bounded pseudoconvex domain in a Stein manifold with $C^2$ boundary.  In \cite{DiFo77a}, Diederich and Forn{\ae}ss proved that for every $0<\eta<1$ there exists a bounded domain $\Omega$ in $\mathbb{C}^n$ with smooth boundary such that $DF(\Omega)<\eta$; these are the famous worm domains (see Section \ref{sec:Examples} below).  Ohsawa and Sibony showed that $DF(\Omega)>0$ for any domain in $\mathbb{CP}^n$ with $C^2$ boundary in \cite{OhSi98}.  Their starting point is Takeuchi's Theorem \cite{Tak64}, which states that the signed distance function satisfies the strong Oka property \eqref{eq:Strong_Oka_Property} for any pseudoconvex domain in $\mathbb{C}\mathbb{P}^n$.  Takeuchi's Theorem has been generalized in \cite{Ele75}, \cite{Suz76}, and \cite{GrWu78}, so it is now known to hold on any manifold with positive holomorphic bisectional curvature.  An alternate proof of Takeuchi's Theorem for domains with $C^2$ boundary was given in \cite{CaSh05}.

Much recent interest in the strong Diederich-Forn{\ae}ss index stems from a key result of Berndtsson and Charpentier \cite{BeCh00}, in which they prove that the Bergman Projection is continuous in the Sobolev space $W^s(\Omega)$ for any $0\leq s<\frac{1}{2}DF(\Omega)$ when $\Omega\subset\mathbb{C}^n$ is a bounded domain with $C^2$ boundary.  In fact, their result only requires a Lipschitz boundary, but we will not consider this possibility in the present paper, except to note that this motivates key results in \cite{Har08a}, \cite{Har17}, and \cite{Har22}.  In \cite{CSW04}, Cao, Shaw, and Wang extended the result of Berndtsson and Charpentier to domains in $\mathbb{C}\mathbb{P}^n$.  When $\partial\Omega$ is smooth, there are many results relating the Diederich-Forn{\ae}ss index to higher order Sobolev regularity.  See, for example, \cite{Koh99}, \cite{Har11}, \cite{PiZa14}, \cite{Liu22}, and \cite{LiSt22}.  Liu and Straube have shown in \cite{LiSt22} that when $\partial\Omega$ is smooth, the Bergman Projection on $(0,n-2)$-forms $P_{n-2}$ is continuous in $W^s_{(0,n-2)}(\Omega)$ for all $s>0$ if $DF(\Omega)=1$.  In particular, this means that when $n=2$ the classical Bergman Projection is continuous in $W^s(\Omega)$ for all $s>0$ if $DF(\Omega)=1$.  Current results in higher dimensions require additional hypotheses, but it is known that when $\Omega\subset\mathbb{C}^n$ admits a defining function which is plurisubharmonic on $\partial\Omega$, we have both $DF(\Omega)=1$ (see \cite{HeFo07} and \cite{HeFo08}) and regularity for the Bergman Projection in $W^s(\Omega)$ for all $s\geq 0$ (see \cite{BoSt91}), so there is great interest in understanding the relationship between the condition $DF(\Omega)=1$ and global regularity for the Bergman Projection.

Classically, we say that $0<\eta<1$ is a Diederich-Forn{\ae}ss exponent for $\Omega$ if there exists a $C^2$ defining function $\rho$ for $\Omega$ and a constant $C>0$ such that $-(-\rho)^\eta$ is strictly plurisubharmonic on $\Omega$ (as in \cite{DiFo77b}).  We note that in either a Stein or projective manifold, if \eqref{eq:DF_Index} holds with $C=0$ then it holds for some $C>0$ (for a different $C^2$ defining function $\rho$).  Hence the classical definition is equivalent to the strong definition that we have given in these contexts.  As shown by Adachi and Yum \cite{AdYu21}, great care must be taken to distinguish these definitions when working in more general classes of manifolds.  In particular, the existence of a $C^2$ defining function $\tilde\rho$ such that $-(-\tilde\rho)^\eta$ is strictly plurisubharmonic on $\Omega$ ($DF_s(\Omega)>\eta$ in the terminology of \cite{AdYu21}) does not necessarily imply the existence of a $C^2$ defining function $\rho$ satisfying \eqref{eq:DF_Index} on general Hermitian manifolds ($DF_s(\partial\Omega)>\eta$ in the terminology of \cite{AdYu21}).  However, as shown in Proposition 3.2 in \cite{AdYu21}, the strong Oka property \eqref{eq:Strong_Oka_Property} implies that the hypotheses of Theorem 2 in \cite{AdYu21} are satisfied, and hence the various definitions of the Diederich-Fornaess index considered by Adachi and Yum agree in this setting.  Since our applications to the Bergman Projection require \eqref{eq:DF_Index} for $C>0$, we restrict to settings in which the strong Oka property holds throughout the present paper.  We note that several sources, including \cite{AdYu21}, only require that strict plurisubharmonicity of $-(-\rho)^\eta$ hold in an interior neighborhood of $\partial\Omega$.  We will see in Propositions \ref{prop:DF_Index_necessary_condition_boundary} and \ref{prop:DF_Index_sufficient_condition_boundary} that our key results also hold for this \textit{a priori} weaker definition.  However, our applications of the Diederich-Forn{\ae}ss index only apply on a Stein domain, and we will see in the proof of Lemma \ref{lem:DF_Index_sufficient_condition} that whenever \eqref{eq:DF_Index} holds on an interior neighborhood of the boundary of a Stein domain, we can obtain a defining function such that \eqref{eq:DF_Index} holds on the entire domain.

In order to facilitate computations of the Diederich-Forn{\ae}ss index, Liu showed that on domains with $C^3$ boundary it is equivalent to consider the existence of a function satisfying a differential inequality on $\partial\Omega$ (\cite{Liu19a} and \cite{Liu19b}).  Using this reduction, Liu was able to explicitly compute the Diederich-Forn{\ae}ss index on the worm domain in \cite{Liu19a}.  In \cite{Yum21}, Yum reformulated Liu's result in terms of D'Angelo's one-form $\alpha$ \cite{DAn79,DAn87} (see Section 5.9 in \cite{Str10} for further background).  Boas and Straube had already shown that this one-form plays a role in the regularity theory for the Bergman Projection in \cite{BoSt93}.  One of our two goals in the present paper is to prove that the result of Liu as reformulated by Yum still holds for any bounded domain with $C^2$ boundary.  Our second goal is to reframe the result of Liu as reformulated by Yum in terms of the geometry of $\partial\Omega$ and $M$.  Although the Diederich-Forn{\ae}ss index itself is independent of the choice of metric, the existence of metrics with certain properties can reveal invariant properties of the ambient manifold, as has been shown in the study of pseudoconvex domains in manifolds with positive holomorphic bisectional curvature.

When studying the Diederich-Forn{\ae}ss index, it is essential to understand how the Levi-form changes near the boundary of $\Omega$.  This is the primary obstacle to working with $C^2$ boundaries: computing the rate of change of the Levi-form requires three derivatives of the defining function.  This is the essence of Range's observation that a simpler, more flexible proof of Diederich and Forn{\ae}ss's original result can be obtained when the boundary is $C^3$ \cite{Ran81}.  The result of Liu \cite{Liu19a} as reformulated by Yum \cite{Yum21} relies on derivatives of the Levi-form, so it also requires a $C^3$ boundary.  In contrast, results on domains with $C^2$ boundaries tend to rely on special properties of the signed distance function, either because in Stein or projective manifolds $-\log(-r)$ is plurisubharmonic on $\Omega$ when $r$ is the signed distance function (as in \cite{DiFo77b}), or via the fact that the eigenvalues of the Hessian of $r$ satisfy a Riccati equation (see for example \cite{Wei75}, \cite{HeMc12}, or \cite{Har19}).  As shown in \cite{CaSh05}, these two properties of the signed distance function are closely linked.

In the present paper, we wish to recover some of the flexibility of Range's method from \cite{Ran81} when working with $C^2$ boundaries.  Given a domain $\Omega$ with $C^2$ boundary, we say that a $C^2$ defining function $r$ for $\Omega$ on some neighborhood $U$ of $\partial\Omega$ is \textit{admissible} if $|dr|$ is $C^2$ on $U$ with respect to the given Hermitian metric.  The signed distance function is always admissible because $|dr|\equiv 1$ on a neighborhood of $\partial\Omega$ (although a result of Krantz and Parks \cite{KrPa81} is essential here).  This gives us a large class of admissible defining functions.

Given an admissible defining function $r$ for $\Omega$ on a neighborhood $U$ of $\partial\Omega$, let $L_r$ be the unique $C^1$ section of $T^{1,0}(U)$ satisfying
\begin{equation}
\label{eq:metric_compatibility_complex}
  |\partial r|^{-2}\partial r(Z)=\left<Z,L_r\right>\text{ on }U\text{ for all }Z\in T^{1,0}(U).
\end{equation}
We define a real-valued $1$-form $\alpha_r\in\Lambda^1(U)$ by
\begin{equation}
\label{eq:alpha_r_defn}
  \alpha_r(Z)=\ddbar r(Z,\bar L_r)\text{ and }\alpha_r(\bar Z)=\ddbar r(L_r,\bar Z)\text{ on }U\text{ for all }Z\in T^{1,0}(U).
\end{equation}
On $\partial\Omega$, this is equivalent to the usual definition by (5.85) in \cite{Str10}.  We will show in Proposition \ref{prop:alpha_closed} that the restriction of $\alpha_r$ to the tangent space of any complex submanifold in the boundary of $\Omega$ is closed in the weak sense.  This follows from Lemma 5.14 in \cite{Str10} when the boundary is at least $C^3$, but is a new result on $C^2$ boundaries.  Even though $\alpha_r$ is only continuous, we will show in Proposition \ref{prop:beta} that there exists a continuous, real-valued $2$-form $\beta_r\in\Lambda^2(U)$ with the property that
\begin{equation}
\label{eq:beta_C3_defn}
  \beta_r=-\frac{i}{2}(\partial\alpha_r-\dbar\alpha_r)
\end{equation}
on $U$ in the weak sense.  We will see (see Lemma \ref{lem:geometric_invariance} and the preceding discussion) that $\alpha_r$ and $\beta_r$ are independent of the choice of metric when restricted to the null-space of the Levi-form.  When restricted to a complex submanifold contained in $\partial\Omega$, the de Rham cohomology class represented by $\alpha_r$ is independent of the choice of $r$ (see \eqref{eq:alpha_r_geometric} below), and the Bott-Chern cohomology class represented by $\beta_r$ is independent of the choice of $r$ (see \eqref{eq:beta_r_geometric} below).

For any $P\in\partial\Omega$, the null-space of the Levi-form $\mathcal{N}_P(\partial\Omega)$ is defined to be the set of all $Z\in T_P^{1,0}(\partial\Omega)$ such that $\ddbar\rho(Z,\bar W)=0$ for all $W\in T_P^{1,0}(\partial\Omega)$, where $\rho$ is any $C^2$ defining function for $\Omega$.

With these definitions, we are ready to state a $C^2$ version of the result of Liu \cite{Liu19a} as reformulated by Yum \cite{Yum21}:
\begin{thm}
\label{thm:boundary_equivalence}
  Let $M$ be a Hermitian manifold of complex dimension $n\geq 2$ and let $\Omega\subset M$ be a relatively compact Stein domain with $C^2$ boundary.  Let $U$ be a neighborhood of $\partial\Omega$ and let $r\in C^2(U)$ be an admissible defining function for $\Omega$.  For $0\leq\eta<1$, the following are equivalent:
  \begin{enumerate}
    \item $\eta<DF(\Omega)$,
    \item there exists $C>0$ and $h\in C^\infty(U)$ such that
      \begin{equation}
      \label{eq:DF_Index_boundary}
        -i\beta_r(Z,\bar Z)+\ddbar h(Z,\bar Z)\geq\frac{\eta}{1-\eta}\abs{\partial h(Z)-\alpha_r(Z)}^2+C|Z|^2
      \end{equation}
      on $\partial\Omega$ for all $Z\in T^{1,0}(\partial\Omega)$, and
    \item there exists $C>0$ and $h\in C^2(\partial\Omega)$ such that \eqref{eq:DF_Index_boundary} holds for all $P\in\partial\Omega$ and $Z\in\mathcal{N}_P(\partial\Omega)$.
  \end{enumerate}
  In case (2) or (3), either $-(-re^{-h})^\eta$ (when $\eta>0$) or $-\log(-re^{-h})$ (when $\eta=0$) is strictly plurisubharmonic on $\tilde U\cap\Omega$ for some neighborhood $\tilde U\subset U$ of $\partial\Omega$.
\end{thm}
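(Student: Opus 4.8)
The plan is to establish the chain $(1)\Rightarrow(2)\Rightarrow(3)\Rightarrow(1)$, treating the first two implications as the "easy" direction (where a defining function satisfying \eqref{eq:DF_Index} is unpacked into a boundary inequality) and the last implication as the substantive one (where a boundary function $h$ on the null-space is promoted to an actual defining function). The final sentence about strict plurisubharmonicity of $-(-re^{-h})^\eta$ or $-\log(-re^{-h})$ should emerge directly from the construction in the $(3)\Rightarrow(1)$ step, so the real work is concentrated there.

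For $(1)\Rightarrow(2)$: given $\eta<\eta'<DF(\Omega)$, take a $C^2$ defining function $\rho$ with $i\ddbar(-(-\rho)^{\eta'})\geq\eta' C(-\rho)^{\eta'}\omega$. Write $\rho=re^{-g}$ for some function $g$ (continuous, but only as regular as the ratio $\rho/r$ allows near the boundary), and compute the complex Hessian of $-(-\rho)^\eta$ in terms of $r$, $g$, $\alpha_r$, and $\beta_r$. The standard expansion is
\begin{equation*}
  i\ddbar(-(-\rho)^\eta) = \eta(-\rho)^\eta\left[\tfrac{1-\eta}{(-\rho)^2}i\partial\rho\wedge\dbar\rho - \tfrac{i\ddbar\rho}{-\rho}\right],
\end{equation*}
and on $\partial\Omega$, restricting to $Z\in T^{1,0}(\partial\Omega)$, the $i\partial\rho\wedge\dbar\rho$ term drops out on the tangential part while $i\ddbar\rho$ restricted tangentially produces the Levi form plus the $\alpha_r$- and $\beta_r$-contributions coming from the change of defining function; taking $h$ to be a smooth approximation of $-g$ and using the positivity of the Levi form off the null-space to absorb error terms yields \eqref{eq:DF_Index_boundary}. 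The implication $(2)\Rightarrow(3)$ is immediate by restriction of $Z$ to $\mathcal{N}_P(\partial\Omega)$, once one checks the tangential inequality \eqref{eq:DF_Index_boundary} is meaningful there; here Proposition \ref{prop:alpha_closed} and Proposition \ref{prop:beta} guarantee $\alpha_r$ and $\beta_r$ are well-behaved and metric-independent on the null-space, so the inequality is coherent.

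For the main implication $(3)\Rightarrow(1)$: the strategy is to start from $h\in C^2(\partial\Omega)$ satisfying \eqref{eq:DF_Index_boundary} on the null-space, extend it to a $C^2$ function on a neighborhood $U$ (being careful that the extension need not satisfy the full inequality off the null-space, only on $\mathcal{N}_P$), and then consider $\tilde\rho = re^{-h}$ as a candidate. Computing $i\ddbar(-(-\tilde\rho)^\eta)$ tangentially as above, one gets a lower bound of the form $C|Z|^2$ for $Z\in\mathcal{N}_P$; for $Z$ with a nonzero Levi-form component the Levi form itself dominates, so by a compactness/continuity argument on $\partial\Omega$ one obtains strict plurisubharmonicity of $-(-\tilde\rho)^\eta$ on $\partial\Omega$, hence on an interior neighborhood $\tilde U\cap\Omega$ of $\partial\Omega$ by continuity (this is exactly the final sentence of the theorem, and works verbatim with $-\log(-\tilde\rho)$ when $\eta=0$). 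Finally, since $\Omega$ is Stein, one invokes the argument referenced in Lemma \ref{lem:DF_Index_sufficient_condition}: strict plurisubharmonicity of $-(-\tilde\rho)^\eta$ near $\partial\Omega$ can be patched with a global strictly plurisubharmonic exhaustion via a smooth cutoff to produce a $C^2$ defining function $\rho$ for which \eqref{eq:DF_Index} holds on all of $\Omega$ with some $C>0$, giving $\eta<DF(\Omega)$.

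The main obstacle is the $C^2$-regularity bookkeeping in $(3)\Rightarrow(1)$ (and symmetrically in $(1)\Rightarrow(2)$): the comparison $\rho=re^{-h}$ trades three derivatives of $\rho$ for derivatives of the Levi form, which one cannot afford, so the entire computation must be organized so that the only second-order information used about $r$ is encoded in $\alpha_r$ and $\beta_r$ — and this is legitimate precisely because $r$ is \emph{admissible}, i.e. $|dr|$ is $C^2$, which is what makes $L_r$ a $C^1$ field and $\beta_r$ a well-defined continuous form by Proposition \ref{prop:beta}. Keeping track of where the identity \eqref{eq:beta_C3_defn} is used in the weak (distributional) sense rather than pointwise, and ensuring all the estimates survive passing to that weak formulation, is the delicate part; everything else is a controlled perturbation argument using the strict positivity of the Levi form transverse to its null-space.
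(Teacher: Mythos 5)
Your overall architecture matches the paper exactly: the cycle $(1)\Rightarrow(2)\Rightarrow(3)\Rightarrow(1)$, with $(1)\Rightarrow(2)$ unpacking the interior Hessian inequality to a boundary statement, $(2)\Rightarrow(3)$ by restriction, and $(3)\Rightarrow(1)$ constructing a defining function, with the Stein hypothesis only invoked to pass from an interior neighborhood of $\partial\Omega$ to all of $\Omega$. This is the same route the paper takes via Propositions \ref{prop:DF_Index_necessary_condition_boundary} and \ref{prop:DF_Index_sufficient_condition_boundary}.

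However, there is a genuine gap in your $(3)\Rightarrow(1)$ step. Condition (1) demands the \emph{strict} inequality $\eta < DF(\Omega)$, but your argument only manufactures a defining function $\tilde\rho$ satisfying \eqref{eq:DF_Index} at the exponent $\eta$ itself, which gives $\eta \leq DF(\Omega)$ since $DF(\Omega)$ is a supremum. To get strictness you must show that some exponent strictly larger than $\eta$ is also a strong Diederich--Forn{\ae}ss exponent. The paper does this with a perturbation: if \eqref{eq:DF_Index_boundary} holds with $C>0$ and exponent $\eta$, then by shrinking $C$ to some $\tilde C<C$ and using that $\tfrac{\eta}{1-\eta}$ is continuous and increasing in $\eta$, one finds $\tilde\eta\in(\eta,1)$ so that \eqref{eq:DF_Index_boundary} still holds with $\tilde C$ and $\tilde\eta$. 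Applying the sufficiency argument at the level $\tilde\eta$ then yields $\tilde\eta\leq DF(\Omega)$, hence $\eta<DF(\Omega)$. Your proposal omits this step entirely. A secondary but nontrivial gap: you label the passage from the interior inequality to the boundary inequality in $(1)\Rightarrow(2)$ as routine, but since the inequality on $U\cap\Omega$ degenerates (coefficients blow up like $(-r)^{-1}$ and $(-r)^{-2}$), the passage to $\partial\Omega$ is the heart of the matter and occupies a distributional integration-by-parts argument along the flow of $X_r$ (together with the regularization Lemma \ref{lem:regularization} and the Whitney extension Lemma \ref{lem:Whitney}), all resting on the level-set Levi-form comparison in Lemma \ref{lem:Levi_form_bounds}. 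Your remark correctly attributes the role of admissibility, but the phrase ``controlled perturbation argument'' underestimates where the real work lies. (Also, your displayed expansion has a sign error: the coefficient of $i\ddbar\rho$ should be $+\tfrac{1}{-\rho}$, not $-\tfrac{1}{-\rho}$; with the sign as written the quantity would be manifestly negative in the normal direction and the argument could not start.)
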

As shown in \cite{Liu19b} and \cite{Har22}, Theorem \ref{thm:boundary_equivalence} leads to several natural sufficient conditions for $DF(\Omega)=1$.  For example, if $\mathcal{N}_P(\partial\Omega)$ is contained in the tangent space to a complex submanifold of $\partial\Omega$ for all $P\in\partial\Omega$ and $\alpha_r$ is exact on this complex submanifold, then we can choose $h$ so that $dh(Z)=\alpha_r(Z)$ for all $Z\in\mathcal{N}_P(\partial\Omega)$ and $P\in\partial\Omega$.  This is closely related to regularity properties of the Bergman Projection \cite{BoSt93} and the existence of defining functions that are plurisubharmonic on the boundary (\cite{Mer20} and \cite{GaHa22}).  On the other hand, if $\partial\Omega$ satisfies McNeal's Property $(\tilde P)$ \cite{McN02}, then for every $C>0$ and $0<\eta<1$ there exists a function $h\in C^\infty(U)$ such that \eqref{eq:DF_Index_boundary} holds on $\partial\Omega$ for all $Z\in T^{1,0}(U)$.  This is also strong enough to imply the existence of a Stein neighborhood base for $\overline\Omega$ by adapting an argument of Sibony \cite{Sib87}, so in this special case we can take $M$ to be a Stein manifold.  If $\partial\Omega$ is $C^m$ for some $m\geq 3$ and $r$ is a $C^m$ defining function, then $r$ is trivially admissible, so for any $0<\eta<DF(\Omega)$ we may choose $h\in C^\infty(U)$ satisfying \eqref{eq:DF_Index_boundary} on $\partial\Omega$ and obtain a $C^m$ defining function $\rho=re^{-h}$ satisfying \eqref{eq:DF_Index}.  This generalizes the proof of the second statement in Theorem 1.3 in \cite{Har22} to Hermitian manifolds.

As a consequence of Theorem \ref{thm:boundary_equivalence}, we are able to generalize a result of Adachi and Yum (Proposition 5.7 in \cite{AdYu21}) to domains with $C^2$ boundaries:
\begin{cor}
\label{cor:complex_submanifold_Kahler}
  Let $M$ be a Hermitian manifold of complex dimension $n\geq 2$ and let $\Omega\subset M$ be a relatively compact Stein domain with $C^2$ boundary.  If $DF(\Omega)>0$, then $\partial\Omega$ contains no compact complex submanifolds of positive dimension.
\end{cor}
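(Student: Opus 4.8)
The plan is to argue by contradiction using the characterization in Theorem~\ref{thm:boundary_equivalence}, specifically the implication $(1)\Rightarrow(3)$. Suppose $DF(\Omega)>0$; fix any $0<\eta<DF(\Omega)$. Then by Theorem~\ref{thm:boundary_equivalence} there exist $C>0$ and $h\in C^2(\partial\Omega)$ such that the inequality \eqref{eq:DF_Index_boundary} holds at every $P\in\partial\Omega$ for all $Z\in\mathcal N_P(\partial\Omega)$. Now suppose toward a contradiction that $S\subset\partial\Omega$ is a compact complex submanifold with $\dim_{\mathbb C}S=k\geq 1$. Since $S$ lies in $\partial\Omega$, its holomorphic tangent space $T^{1,0}_P S$ is annihilated by the Levi-form (any defining function restricted to $S$ is pluriharmonic — indeed identically zero — so $\ddbar\rho$ vanishes on $T^{1,0}_PS\times\overline{T^{1,0}_PS}$), hence $T^{1,0}_P S\subset\mathcal N_P(\partial\Omega)$ for every $P\in S$. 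Therefore \eqref{eq:DF_Index_boundary} holds for all $Z\in T^{1,0}_PS$, and in particular, restricting the forms $\alpha_r$, $\beta_r$, and $h$ to $S$, we obtain on $S$ the inequality
\begin{equation}
\label{eq:restricted_ineq_on_S}
  -i\beta_r|_S(Z,\bar Z)+\ddbar(h|_S)(Z,\bar Z)\geq\frac{\eta}{1-\eta}\abs{\partial(h|_S)(Z)-\alpha_r|_S(Z)}^2+C|Z|^2
\end{equation}
for all $Z\in T^{1,0}S$.

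Next I would integrate this inequality against the volume form of $S$ to extract a contradiction. The left-hand side, traced against a Hermitian metric on $S$ and integrated, is controlled by cohomological data: $\int_S \ddbar(h|_S)\wedge\omega_S^{k-1}=0$ by Stokes' theorem, so the $h$-Hessian term contributes nothing to the integral; and $\int_S (-i\beta_r|_S)\wedge\omega_S^{k-1}$ depends only on the Bott--Chern class of $\beta_r|_S$, which as noted in the excerpt (see the discussion preceding \eqref{eq:beta_r_geometric}) is independent of the choice of defining function $r$. The key point is that this class is \emph{trivial}: for the distance-type or any particular admissible $r$, the one-form $\alpha_r$ restricted to $S$ represents a de Rham class independent of $r$, and $\beta_r=-\tfrac i2(\partial\alpha_r-\dbar\alpha_r)$ is $\partial\dbar$-exact on $S$ — more precisely, since $\alpha_r|_S$ is closed (Proposition~\ref{prop:alpha_closed}), writing $\alpha_r|_S=dg$ would make $\beta_r|_S=i\ddbar g$ exact in Bott--Chern cohomology; in general $\alpha_r|_S$ need only be closed, but one checks that the pairing $\int_S(-i\beta_r|_S)\wedge\omega_S^{k-1}$ still vanishes because $-i(\partial\alpha_r-\dbar\alpha_r)$ is $d$-exact when $\alpha_r$ is $d$-closed (it equals $-i\,d(\dbar\alpha_r$-type primitive$)$ after decomposing; concretely $-\tfrac i2(\partial-\dbar)\alpha_r = d(\cdot)$ for a real form built from $\alpha_r$), so its integral against any closed form vanishes. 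Hence the integral of the left side of \eqref{eq:restricted_ineq_on_S} over $S$ is zero.

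On the other hand, the right-hand side of \eqref{eq:restricted_ineq_on_S} is pointwise nonnegative, and the term $C|Z|^2$ is strictly positive for $Z\neq0$; tracing against $\omega_S$ gives a trace $\geq kC>0$ everywhere on $S$, so its integral over the compact manifold $S$ is strictly positive. This contradicts the vanishing of the integral of the (larger) left-hand side, and the contradiction establishes that no such $S$ can exist.

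The main obstacle I anticipate is the precise justification that $\int_S(-i\beta_r|_S)\wedge\omega_S^{k-1}=0$ on $C^2$ boundaries, where $\alpha_r$ and $\beta_r$ are only continuous and the identity \eqref{eq:beta_C3_defn} holds merely in the weak sense. One must (i) invoke Proposition~\ref{prop:alpha_closed} to know $\alpha_r|_S$ is weakly closed, (ii) invoke Proposition~\ref{prop:beta} for the continuous representative $\beta_r$, and (iii) make rigorous that pairing a weakly-$d$-exact current of the form $-\tfrac i2(\partial-\dbar)\alpha_r$ against the smooth closed form $\omega_S^{k-1}$ yields zero — a de Rham / Stokes argument for currents, using that $\alpha_r|_S$ has a well-defined (continuous) de Rham class that is independent of $r$, as asserted in the excerpt via \eqref{eq:alpha_r_geometric}. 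A mild technical subtlety is that $h\in C^2$ suffices for $\int_S\ddbar(h|_S)\wedge\omega_S^{k-1}=0$ by standard Stokes for $C^1$ forms, so that term is unproblematic. Everything else is a routine positivity-versus-cohomology comparison.
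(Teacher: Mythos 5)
Your overall plan — restrict the inequality from Theorem~\ref{thm:boundary_equivalence}(3) to $T^{1,0}S\subset\mathcal N_P(\partial\Omega)$ and integrate against $\omega_S^{k-1}$ to play Stokes against the strictly positive term $C|Z|^2$ — is sound in spirit, but as written it has a genuine gap for $\dim_{\mathbb C}S=k\geq 2$: the vanishing $\int_S(-i\beta_r|_S)\wedge\omega_S^{k-1}=0$ requires $d(\omega_S^{k-1})=0$, not just $\partial\dbar(\omega_S^{k-1})=0$. Since $\iota_S^*\beta_r$ is $d$-exact (Corollary~\ref{cor:beta_on_submanifold}) but in general not $\partial_S\dbar_S$-exact, Gauduchon does not suffice, and you cannot simply "trace against a Hermitian metric on $S$" — you need $S$ to carry a Kähler (or at least balanced) metric, and not every compact complex manifold does. (For $k=1$ there is no issue, but one cannot reduce the general case to curves.) The gap is fixable with data you already have: the pointwise inequality gives a continuous, real, $d$-closed $(1,1)$-form $\beta_r|_S+i\ddbar(h|_S)$ that is positive definite on $T^{1,0}S$; regularizing it by convolution yields a smooth closed positive $(1,1)$-form, so $S$ is Kähler, and you should choose $\omega_S$ to be a Kähler form on $S$ before running the Stokes argument.

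It is worth noting that, after this repair, your route is genuinely different from — and arguably shorter than — the paper's. The paper also first establishes that $S$ is Kähler from the positivity, but then switches to a new Hermitian metric on $M$ restricting to a Kähler metric on $S$, performs a Hodge decomposition $\alpha_r|_S=d_Su+\tilde\alpha_r$, invokes the Kähler identity $\Delta_{d_S}=2\Box_S$ to conclude the harmonic part contributes $\tilde\beta_r=0$, deduces $\beta_r|_S=i\partial_S\dbar_S u$, and then observes $h-u$ would be a strictly plurisubharmonic function on a compact complex manifold — impossible. Your integration argument sidesteps the Kähler identities and the metric change on $M$ entirely (the Kähler form on $S$ is used only as a test form), trading the paper's pointwise conclusion $\tilde\beta_r\equiv 0$ for the weaker but sufficient cohomological vanishing of $\int_S\beta_r|_S\wedge\omega_S^{k-1}$. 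The low-regularity concerns you flag (weak $d$-exactness of a merely continuous $\beta_r$ paired against the smooth closed $\omega_S^{k-1}$) are handled by the standard Stokes theorem for currents and do not present an obstruction.
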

When we describe a complex submanifold as compact, we always mean compact relative to the intrinsic topology on the submanifold, so Corollary \ref{cor:complex_submanifold_Kahler} does not exclude the possibility that $\partial\Omega$ might contain a lamination by a non-compact complex submanifold.  As noted above, Ohsawa and Sibony \cite{OhSi98} have shown that $DF(\Omega)>0$ for any pseudoconvex domain $\Omega\subset\mathbb{C}\mathbb{P}^n$ with $C^2$ boundary, so Corollary \ref{cor:complex_submanifold_Kahler} guarantees that no pseudoconvex domain in $\mathbb{C}\mathbb{P}^n$ with $C^2$ boundary can contain a compact complex submanifold of positive dimension in the boundary.  This is a special case of Theorem 1.3 in \cite{DiOh02}.

On the other hand, Diederich and Ohsawa \cite{DiOh82} have constructed a family of relatively compact Stein domains with real-analytic boundaries that contain compact complex submanifolds in their boundaries (see Section \ref{sec:Examples} below for a special case).  For such a domain $\Omega$, Corollary \ref{cor:complex_submanifold_Kahler} implies that $DF(\Omega)=0$.  The construction of Diederich and Ohsawa is closely related to the construction of the worm domain by Diederich and Forn{\ae}ss in \cite{DiFo77a}, so it would be interesting to know the following (motivated by the result of Barrett in \cite{Bar92}):
\begin{question}
  If $M$ is a Hermitian manifold and $\Omega\subset M$ is a relatively compact Stein domain with $C^2$ boundary such that $DF(\Omega)=0$, does it follow that the Bergman Projection is discontinuous in $W^s(\Omega)$ for all $s>0$?
\end{question}

To state our next main result, we need some additional notation.  Denote the complex structure on $M$ by $J$ and the Hermitian metric on $T(M)$ by  $\left<\cdot,\cdot\right>$.  Let $\omega$ denote the K\"ahler form for the Hermitian metric on $M$, i.e., $\omega(Z,\bar W)=i\left<Z,W\right>$ for all $Z,W\in T^{1,0}(M)$.  Let $\nabla$ denote the Chern connection on $T^{1,0}(M)$ with respect to this Hermitian metric, as well as the unique affine connection on $T(M)$ induced by the Chern connection.  For $X,Y\in T(M)$ we define the torsion tensor
\[
  T_\nabla(X,Y)=\nabla_X Y-\nabla_Y X-[X,Y]
\]
and the curvature tensor
\[
  R_\nabla(X,Y)=\nabla_X\nabla_Y-\nabla_Y\nabla_X-\nabla_{[X,Y]}.
\]
Let $\Omega\subset M$ be a relatively compact Stein domain with $C^2$ boundary.  Let $\nu_{\mathbb{R}}$ denote a $C^1$ section of $T(M)$ such that $\nu_{\mathbb{R}}|_{\partial\Omega}$ is the unit outward normal with respect to the given metric on $M$.  For $X,Y\in T(\partial\Omega)$, the Second Fundamental Form for $\partial\Omega$ is defined by
\[
  \sff_\nabla(X,Y)=-\left<\nabla_X\nu_{\mathbb{R}},Y\right>\nu_{\mathbb{R}}.
\]
We extend each of these tensors and forms to $\mathbb{C}T(M)$ via $\mathbb{C}$-linearity.  Throughout this paper, all tensors are taken to be $\mathbb{C}$-linear in their arguments, with the notable exception of the sesquilinear Hermitian inner product.  Define the complex normal $\nu_{\mathbb{C}}\in T^{1,0}(M)$ by $\nu_{\mathbb{C}}=\frac{1}{\sqrt{2}}(\nu_{\mathbb{R}}-iJ\nu_{\mathbb{R}})$, so that $|\nu_{\mathbb{C}}|=1$ on $\partial\Omega$ and $\left<\nu_{\mathbb{C}},Z\right>=0$ on $\partial\Omega$ whenever $Z\in T^{1,0}(\partial\Omega)$.

Our second main result is the following:
\begin{thm}
\label{thm:geometric_equivalence}
  Let $M$ be a complex manifold of complex dimension $n\geq 2$ and let $\Omega\subset M$ be a relatively compact Stein domain with $C^2$ boundary.  For $0\leq\eta<1$, the following are equivalent:
  \begin{enumerate}
    \item $\eta<DF(\Omega)$,
    \item there exists a Hermitian metric on $T(M)$ and a constant $C>0$ such that
        \begin{equation}
        \label{eq:DF_Index_boundary_geometry}
          \sum_{j=1}^{n-1}\abs{\sff_\nabla(Z,W_j)}^2+\frac{1}{2}\left<R_\nabla(Z,\bar Z)\nu_{\mathbb{C}},\nu_{\mathbb{C}}\right>\geq
          \frac{\eta}{1-\eta}\abs{\sff_\nabla(Z,J\nu_\mathbb{R})}^2+C|Z|^2
        \end{equation}
        for all $P\in\partial\Omega$ and $Z\in\mathcal{N}_P(\partial\Omega)$, where $\set{W_j}_{j=1}^{n-1}$ is an orthonormal basis for $T^{1,0}_P(\partial\Omega)$,
    \item there exists a Hermitian metric on $T(M)$ and a constant $C>0$ such that
        \begin{equation}
        \label{eq:DF_Index_boundary_vector_field}
          \frac{1}{2}\abs{\nabla_{\bar Z}\nu_{\mathbb{C}}-\left<\nabla_{\bar Z}\nu_{\mathbb{C}},\nu_{\mathbb{C}}\right>\nu_{\mathbb{C}}}^2+\frac{1}{2}\left<R_\nabla(Z,\bar Z)\nu_{\mathbb{C}},\nu_{\mathbb{C}}\right>\geq
          \frac{\eta}{1-\eta}\abs{\left<\nabla_{\bar Z}\nu_{\mathbb{C}},\nu_{\mathbb{C}}\right>}^2+C|Z|^2
        \end{equation}
        for all $P\in\partial\Omega$ and $Z\in\mathcal{N}_P(\partial\Omega)$, and
    \item there exists a Hermitian metric on $T(M)$ and a constant $C>0$ such that the signed distance function $\rho(z)=-\dist(z,\partial\Omega)$ for $z\in\overline\Omega$ and $\rho(z)=\dist(z,\partial\Omega)$ for $z\notin\overline\Omega$ satisfies \eqref{eq:DF_Index} (when $\eta>0$) or \eqref{eq:Strong_Oka_Property} (when $\eta=0$) on $U\cap\Omega$ for some neighborhood $U$ of $\partial\Omega$.
  \end{enumerate}
\end{thm}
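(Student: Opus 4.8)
The plan is to run Theorem~\ref{thm:boundary_equivalence} with the admissible defining function taken to be the signed distance function of a suitably chosen Hermitian metric, and to rewrite the analytic inequality \eqref{eq:DF_Index_boundary} in terms of the Chern connection of that metric. The role played by the function $h$ in Theorem~\ref{thm:boundary_equivalence} will be taken over by the freedom in the choice of metric, so that in Theorem~\ref{thm:geometric_equivalence} one works with $h\equiv 0$ but a variable metric.

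The first task is to set up the ``dictionary''. Fix a Hermitian metric on $T(M)$ and let $\rho$ be its signed distance function, which is admissible by \cite{KrPa81}. Since $|d\rho|\equiv 1$ near $\partial\Omega$ forces $|\partial\rho|$ to be locally constant, the section $L_\rho$ of \eqref{eq:metric_compatibility_complex} coincides with $\nu_{\mathbb{C}}$ on $\partial\Omega$. Evaluating \eqref{eq:alpha_r_defn} and using that on $\partial\Omega$ the real Hessian of $\rho$ is the second fundamental form $\sff_\nabla$, one identifies $\alpha_\rho(Z)$ with the component of $\nabla_{\bar Z}\nu_{\mathbb{C}}$ along $\nu_{\mathbb{C}}$ (up to normalization), whence $|\alpha_\rho(Z)|^2=|\sff_\nabla(Z,J\nu_{\mathbb R})|^2$ for $Z\in T^{1,0}(\partial\Omega)$. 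For $\beta_\rho$ one differentiates $\alpha_\rho$ once more: using \eqref{eq:beta_C3_defn}, the commutation rule for covariant derivatives (which produces the curvature $R_\nabla$) and the Gauss--Codazzi equations (which produce the $\sff_\nabla$--quadratic terms), and observing that on $\mathcal{N}_P(\partial\Omega)$ the vanishing of the Levi form absorbs the terms that would otherwise require three derivatives of $\rho$ --- this is precisely the point at which admissibility, together with the Riccati equation for the shape operators of the level sets of $\rho$, recovers the flexibility of \cite{Ran81} on merely $C^2$ boundaries --- one obtains
\[
  -i\beta_\rho(Z,\bar Z)=\sum_{j=1}^{n-1}|\sff_\nabla(Z,W_j)|^2+\tfrac12\left<R_\nabla(Z,\bar Z)\nu_{\mathbb{C}},\nu_{\mathbb{C}}\right>\qquad\text{for }Z\in\mathcal{N}_P(\partial\Omega).
\]
With this dictionary, \eqref{eq:DF_Index_boundary_geometry} for a metric and its own signed distance function is literally \eqref{eq:DF_Index_boundary} with $r=\rho$ and $h\equiv 0$, restricted to $\mathcal{N}_P(\partial\Omega)$. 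Separately, the equivalence of (2) and (3) is a pointwise computation: expand $\nu_{\mathbb{C}}=\tfrac{1}{\sqrt 2}(\nu_{\mathbb R}-iJ\nu_{\mathbb R})$, use $\nabla J=0$ and the metric-compatibility of $\nabla$, and split $\nabla_{\bar Z}\nu_{\mathbb{C}}$ into its $\nu_{\mathbb{C}}$--component and its $T^{1,0}(\partial\Omega)$--component; no use of $Z\in\mathcal{N}_P(\partial\Omega)$ is needed here.

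The implications not involving (1)$\Rightarrow$(2) are now short. If a metric satisfies \eqref{eq:DF_Index_boundary_geometry}, the dictionary yields condition (3) of Theorem~\ref{thm:boundary_equivalence} with $r$ the signed distance function and $h\equiv 0$, hence $\eta<DF(\Omega)$, which is (1). Moreover, running the Diederich--Forn{\ae}ss/Range normal-direction estimate for $-(-\rho)^\eta$ (for $\eta>0$), respectively $-\log(-\rho)$ (for $\eta=0$), using \eqref{eq:DF_Index_boundary_geometry} on $\mathcal{N}_P(\partial\Omega)$ in the Levi-null directions, the positivity of the Levi form in the remaining tangential directions, and the control on $\ddbar\rho$ just inside $\partial\Omega$ furnished by admissibility, shows that this function satisfies the quantitative bound \eqref{eq:DF_Index}, respectively \eqref{eq:Strong_Oka_Property}, on $U\cap\Omega$ for a one-sided neighborhood $U$ of $\partial\Omega$ --- no correcting factor $e^{-h}$ is needed because $h\equiv 0$ already works --- which is (4). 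Finally, if (4) holds then, since $\Omega$ is Stein, the construction used in the proof of Lemma~\ref{lem:DF_Index_sufficient_condition} upgrades \eqref{eq:DF_Index} from $U\cap\Omega$ to a defining function valid on all of $\Omega$, giving (1).

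It remains to prove (1)$\Rightarrow$(2), which I expect to be the main obstacle. By Theorem~\ref{thm:boundary_equivalence}, $\eta<DF(\Omega)$ produces a Hermitian metric $g_0$, an admissible defining function $r_0$, and $h_0\in C^\infty$ satisfying \eqref{eq:DF_Index_boundary}. Since $\alpha_r$ and $\beta_r$ transform on $\mathcal{N}_P(\partial\Omega)$ by $\alpha_{re^{-h}}(Z)=\alpha_r(Z)-\partial h(Z)$ and $-i\beta_{re^{-h}}(Z,\bar Z)=-i\beta_r(Z,\bar Z)+\ddbar h(Z,\bar Z)$, inequality \eqref{eq:DF_Index_boundary} is the statement $-i\beta_{\rho_0}(Z,\bar Z)\ge\frac{\eta}{1-\eta}|\alpha_{\rho_0}(Z)|^2+C|Z|^2$ on $\mathcal{N}_P(\partial\Omega)$, where $\rho_0:=r_0e^{-h_0}$. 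I then want a Hermitian metric whose signed distance function is, up to sign, $\rho_0$ near $\partial\Omega$; the natural candidate is the conformal rescaling $\tilde g:=|d\rho_0|^2_{g_0}\,g_0$, for which $|d\rho_0|_{\tilde g}\equiv 1$ near $\partial\Omega$ and hence $\rho_0=\pm\dist_{\tilde g}(\cdot,\partial\Omega)$ there. Because $\alpha_{\rho_0}$ and $\beta_{\rho_0}$ restricted to $\mathcal{N}_P(\partial\Omega)$ do not depend on the metric (Lemma~\ref{lem:geometric_invariance}), the displayed inequality holds for the $\tilde g$--versions of these forms, and the dictionary applied to $\tilde g$ then gives \eqref{eq:DF_Index_boundary_geometry}. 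The genuine difficulty is regularity: $r_0$ is only $C^2$, so $|d\rho_0|^2_{g_0}$ is only $C^1$, and $\tilde g$ has no continuous Chern curvature. I would remedy this by approximation: replace the conformal factor by a $C^\infty$ function $\lambda_\delta\to|d\rho_0|^2_{g_0}$ in $C^1$, set $\tilde g_\delta:=\lambda_\delta g_0$ with signed distance function $\rho_\delta$, and use the slack $C|Z|^2$ in the displayed inequality together with \eqref{eq:beta_C3_defn} and uniform bounds on the shape operators of the level sets of $\rho_\delta$ to conclude that $\alpha_{\rho_\delta}\to\alpha_{\rho_0}$ and $\beta_{\rho_\delta}\to\beta_{\rho_0}$ uniformly on $\partial\Omega$, so that \eqref{eq:DF_Index_boundary_geometry} holds for the smooth metric $\tilde g_\delta$ once $\delta$ is small. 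Controlling the borderline object $\beta$ under this perturbation is the crux; the rest is bookkeeping.
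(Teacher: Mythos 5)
Your ``dictionary'' ($\alpha_r$ and $\beta_r$ in terms of $\sff_\nabla$, $R_\nabla$, and $\log|dr|$), the pointwise equivalence of (2) and (3), and the implications (2)$\Rightarrow$(1), (2)$\Rightarrow$(4), (4)$\Rightarrow$(1) are all essentially correct and match the paper's strategy. The genuine gap is in (1)$\Rightarrow$(2), and it is not merely a bookkeeping issue that you have deferred: your choice of conformal factor is the wrong one, and the approximation you sketch to repair it faces the very obstacle the paper is designed to avoid.

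You want to make $\rho_0=r_0e^{-h_0}$ the signed distance function of the new metric, so you take $\tilde g=|d\rho_0|^2_{g_0}\,g_0$. Since $\rho_0$ is only $C^2$, the factor $|d\rho_0|^2_{g_0}$ is only $C^1$, so $\tilde g$ is only $C^1$: its Chern curvature is not a function, and the objects on the left of \eqref{eq:DF_Index_boundary_geometry} are not defined pointwise. You then invoke Lemma~\ref{lem:geometric_invariance} to transfer $\alpha_{\rho_0}$, $\beta_{\rho_0}$ between metrics, but that lemma is stated for a $C^3$ defining function, which $\rho_0$ is not, so you cannot appeal to it here. Finally, the smoothing $\lambda_\delta\to|d\rho_0|^2_{g_0}$ only converges in $C^1$, so the convergence of $R_{\nabla_\delta}$ (a second-derivative object of the metric) is exactly the kind of loss of one derivative that this whole paper is built to circumvent; ``the rest is bookkeeping'' understates the problem.

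The fix is to choose the conformal factor differently, and to abandon the requirement that the new defining function be a signed distance function --- which is not needed for (2). The paper keeps the \emph{original} admissible $r$ from Theorem~\ref{thm:boundary_equivalence} and rescales by
\[
  \left<\cdot,\cdot\right>_1 \;=\; 2\,e^{-2h}\,|\partial r|_0^2\left<\cdot,\cdot\right>_0 .
\]
Because $r$ is admissible for $g_0$, $|\partial r|_0$ is $C^2$, and $h$ is smooth, so this factor is $C^2$; moreover $|dr|_1=e^h$, so $r$ remains admissible for $g_1$. (Your factor $|d\rho_0|^2_{g_0}$ agrees with $2e^{-2h}|\partial r|^2_0$ \emph{on} $\partial\Omega$, since $d\rho_0=e^{-h}dr$ there, but the former is only the $C^1$ extension while the latter is the natural $C^2$ extension off the boundary.) With $|dr|_1=e^h$, the formulas \eqref{eq:alpha_r_geometric} and \eqref{eq:beta_r_geometric} yield, on $\mathcal{N}_P(\partial\Omega)$,
\[
  \alpha_r(Z)-\partial h(Z)=-\,i\left<\sff_{\nabla^1}(Z,J\nu_{\mathbb R}),\nu_{\mathbb R}\right>_1,\qquad
  -i\beta_r(Z,\bar Z)+\ddbar h(Z,\bar Z)=\sum_j\abs{\sff_{\nabla^1}(Z,W_j)}_1^2+\tfrac12\left<R_{\nabla^1}(Z,\bar Z)\nu_{\mathbb C},\nu_{\mathbb C}\right>_1,
\]
so \eqref{eq:DF_Index_boundary} \emph{is} \eqref{eq:DF_Index_boundary_geometry} for $g_1$, with no intermediate passage through $\rho_0$. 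Only at the very end does one perturb $g_1$ (which is $C^2$, not merely $C^1$) to a smooth metric, which is a stable perturbation of \eqref{eq:DF_Index_boundary_geometry} absorbed by the slack $C$. This is also why the paper proves (4)$\Rightarrow$(2) directly (taking $\rho=r$ and $h\equiv 0$ in Lemma~\ref{lem:DF_Index_necessary_condition} and Proposition~\ref{prop:DF_Index_necessary_condition_boundary}, so no regularization of $h$ is needed) rather than routing (4)$\Rightarrow$(1)$\Rightarrow$(2) as you do.
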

We will see that \eqref{eq:DF_Index_boundary_geometry} and \eqref{eq:DF_Index_boundary_vector_field} are the same estimate with different notation: \eqref{eq:DF_Index_boundary_geometry} is written in terms of the extrinsic curvature of $\partial\Omega$ and \eqref{eq:DF_Index_boundary_vector_field} is written in terms of the components of $\dbar\nu_{\mathbb{C}}$.  We note that the crucial term $\left<\nabla_{\bar Z}\nu_{\mathbb{C}},\nu_{\mathbb{C}}\right>$ appearing in \eqref{eq:DF_Index_boundary_vector_field} is closely related to the good vector field condition introduced by Boas and Straube in \cite{BoSt91} (see \cite{BoSt99} and \cite{Str10} for generalizations of this condition).  As explained in \cite{Har19}, if the set of weakly pseudoconvex points is suitably well-behaved, this observation can be used to show that $DF(\Omega)=1$ when $\Omega\subset\subset\mathbb{C}^n$ has a smooth boundary and admits a family of good vector fields in the sense of \cite{BoSt99}.

Ohsawa and Sibony have already shown that the Diederich-Forn{\ae}ss index is positive on domains with $C^2$ boundary in manifolds with strictly positive holomorphic bisectional curvature (see Theorem 1.1 in \cite{OhSi98}).  The holomorphic bisectional curvature of $M$ is strictly positive precisely when $\frac{\left<R_\nabla(Z,\bar Z)W,W\right>}{|Z|^2|W|^2-|\left<Z,W\right>|^2}>0$ for all linearly independent $Z,W\in T^{1,0}(M)$, and hence \eqref{eq:DF_Index_boundary_geometry} holds for some $\eta>0$ sufficiently small whenever the holomorphic bisectional curvature of $M$ is strictly positive.  Note that every projective manifold admits a metric for which the holomorphic bisectional curvature is positive by pulling back the Fubini-Study metric via the inclusion map.  Furthermore, every relatively compact domain in a Stein manifold can be embedded in a (possibly incomplete) projective manifold by embedding the Stein manifold in $\mathbb{C}^n$ and treating $\mathbb{C}^n$ as a local coordinate chart for a neighborhood in $\mathbb{C}\mathbb{P}^n$, so Theorem \ref{thm:geometric_equivalence} implies both the original result of Diederich and Forn{\ae}ss \cite{DiFo77b} and the analogous result proven by Ohsawa and Sibony \cite{OhSi98}.
\begin{question}
  Does there exist a Hermitian manifold $M$ and a relatively compact Stein domain $\Omega\subset M$ with $C^2$ boundary such that $DF(\Omega)>0$ and there does not exist a K\"ahler metric on any neighborhood of $\overline\Omega$?  Does there exist such a domain such that no neighborhood of $\overline\Omega$ admits a metric with positive holomorphic bisectional curvature?
\end{question}

In Section \ref{sec:Examples}, we will look at several examples that illustrate the main ideas behind this paper.  In Section \ref{sec:Third_derivatives}, we will generalize the Hessian operator to a third-order differential operator and study the symmetries of this operator.  In Section \ref{sec:regularity_properties}, we will look carefully at those special cases in which a third derivative of a $C^2$ defining function will exist.  Sections \ref{sec:alpha} and \ref{sec:beta} will establish the basic properties of the forms $\alpha_r$ and $\beta_r$.  Once this groundwork has been established, Section \ref{sec:level_curves} can finally develop the relationship between the Levi-form of a level curve of an admissible defining function and the Levi-form on the boundary.  This allows us to derive necessary and sufficient conditions for $0<\eta<1$ to be a Diederich-Forn{\ae}ss exponent (or for the strong Oka property to hold) in Sections \ref{sec:DF_index_necessary} and \ref{sec:DF_index_sufficient}.  The proofs of the theorems and corollaries discussed in this introduction are all presented in Section \ref{sec:proofs}.  We conclude with an appendix that outlines the proofs of some well-known results in order to verify that our low regularity hypotheses will suffice.

\section{Examples}
\label{sec:Examples}

Before proving our main results, we will consider several illustrative examples.  First we consider the well-known worm domain of Diederich and Forn{\ae}ss \cite{DiFo77a}.  See 6.4 in \cite{ChSh01} for a detailed exposition of the worm domain and its properties.  We note that the worm domain is traditionally parameterized by a parameter denoted $\beta$, but since $\beta$ has a different meaning in this paper we will replace this parameter with $\gamma$.

Fix $\gamma>\frac{\pi}{2}$.  Let $\lambda\in C^\infty(\mathbb{R})$ be an even, convex function such that $\lambda(x)=0$ when $|x|\leq\gamma-\frac{\pi}{2}$ and $\lambda(x)>0$ when $|x|>\gamma-\frac{\pi}{2}$.  For $z\in\mathbb{C}^2$ such that $z_2\neq 0$, we set
\[
  r_\gamma(z)=\abs{z_1+e^{i\log|z_2|^2}}^2-1+\lambda(\log|z_2|^2),
\]
and we define
\[
  \Omega_\gamma=\set{(z_1,z_2)\in\mathbb{C}^2:z_2\neq 0\text{ and }r_\gamma(z)<0}.
\]
The domain $\Omega_\gamma$ is a bounded pseudoconvex domain with smooth boundary, and the boundary contains the complex submanifold
\[
  S_\gamma=\set{(z_1,z_2)\in\mathbb{C}^2:z_1=0\text{ and }\abs{\log|z_2|^2}<\gamma-\frac{\pi}{2}}.
\]
Moreover, $\partial\Omega_\gamma\backslash\bar S_\gamma$ is strictly pseudoconvex, so it suffices to analyze $\partial\Omega_\gamma$ on the set $S_\gamma$.  Using a version of Theorem \ref{thm:boundary_equivalence}, Liu has explicitly computed the Diederich-Forn{\ae}ss Index of $\Omega_\gamma$ to be $DF(\Omega_\gamma)=\frac{\pi}{2\gamma}$.  We will construct a K\"ahler metric in a neighborhood of $\overline{\Omega_\gamma}$ satisfying \eqref{eq:DF_Index_boundary_geometry}.

Fix $0<\eta<\frac{\pi}{2\gamma}$.  Then there exists a parameter $\frac{1}{\eta}-1>t>\frac{2\gamma}{\pi}-1$.  Let $f\in C^\infty(\mathbb{R})$ be an even, positive-valued function such that $f(x)=(\cos(t^{-1}x))^{2t}$ on some neighborhood of the set $\left[\frac{\pi}{2}-\gamma,\gamma-\frac{\pi}{2}\right]$.  When $f(x)=(\cos(t^{-1}x))^{2t}$, we compute $f'(x)=-2f(x)\tan(t^{-1}x)$.  For some constant $s>0$, we define our K\"ahler metric via the K\"ahler form
\[
  \omega_\gamma=i\ddbar\left(|z_1|^2 f(\log|z_2|^2)+s\log|z_2|^2\right),
\]
so that $d\omega_\gamma\equiv 0$ and we may expand to obtain
\begin{multline*}
  \omega_\gamma=i f(\log|z_2|^2)dz_1\wedge d\bar z_1+i\frac{z_1}{z_2}f'(\log|z_2|^2)dz_2\wedge d\bar z_1+i\frac{\bar z_1}{\bar z_2} f'(\log|z_2|^2)dz_1\wedge d\bar z_2\\
  +i\left(\frac{|z_1|^2}{|z_2|^2} f''(\log|z_2|^2)+\frac{s}{|z_2|^2}\right)dz_2\wedge d\bar z_2.
\end{multline*}
We may choose $s>0$ sufficiently large so that this form is positive definite on a neighborhood of $\overline{\Omega_\gamma}$, and denote this neighborhood $M_\gamma$.  As the notation suggests, we will view $M_\gamma$ as an (incomplete) K\"ahler manifold containing $\overline{\Omega_\gamma}$.  Observe that
\begin{equation}
\label{eq:omega_gamma_on_S}
  \omega_\gamma|_{S_\gamma}=if(\log|z_2|^2)dz_1\wedge d\bar z_1+i \frac{s}{|z_2|^2} dz_2\wedge d\bar z_2.
\end{equation}

We compute
\[
  \partial r_\gamma(z)|_{S_\gamma}=e^{-i\log|z_2|^2}dz_1.
\]
If we set $Z=\frac{\partial}{\partial z_2}$, then $Z|_{S_\gamma}$ spans $T^{1,0}(S_\gamma)$.  Hence, any formula which holds on $S_\gamma$ may be differentiated by $Z$.  The unique vector field $L_{r_\gamma}$ satisfying \eqref{eq:metric_compatibility_complex} must satisfy $L_{r_\gamma}|_{S_\gamma}=e^{i\log|z_2|^2}\frac{\partial}{\partial z_1}$.  We also have
\[
  \ddbar r_\gamma(z)|_{S_\gamma}=dz_1\wedge d\bar z_1-\frac{i}{\bar z_2}e^{-i\log|z_2|^2}dz_1\wedge d\bar z_2+\frac{i}{z_2}e^{i\log|z_2|^2}dz_2\wedge d\bar z_1,
\]
so $\alpha_{r_\gamma}(Z)|_{S_\gamma}=\frac{i}{z_2}$.  If $\iota_{S_\gamma}:S_\gamma\rightarrow M_\gamma$ is the inclusion map, then we have $\iota_{S_\gamma}^*\alpha_{r_\gamma}=\frac{i}{z_2}dz_2-\frac{i}{\bar z_2}d\bar z_2$, and hence $\iota_{S_\gamma}^*\beta_{r_\gamma}=0$ (see \eqref{eq:beta_C3_defn} for the definition).  Clearly $\iota_{S_\gamma}^*\alpha_{r_\gamma}$ is $d_{S_\gamma}$-closed (see Proposition \ref{prop:alpha_closed} below), but this is not $d_{S_\gamma}$-exact, even in the weak sense, since there does not exist a continuous branch of $-2\im\log z$ on $S_\gamma$.  Hence $[\iota_{S_\gamma}^*\alpha_{r_\gamma}]$ represents a non-trivial cohomology class in $H^1_{dR}(S_\gamma)$.  As observed in \cite{BoSt93}, this lies at the heart of the many pathologies of the worm domain.  We will see in Corollary \ref{cor:beta_on_submanifold} that $\iota_{S_\gamma}^*\beta_{r_\gamma}$ always represents a trivial cohomology class in $H^2_{dR}(S_\gamma)$, so it is of greater interest to check if $\iota_{S_\gamma}^*\beta_{r_\gamma}$ is $\partial_{S_\gamma}\dbar_{S_\gamma}$-exact, i.e., if $[\iota_{S_\gamma}^*\beta_{r_\gamma}]$ represents a non-trivial element of the Bott-Chern cohomology.  On the worm domain, $\iota_{S_\gamma}^*\beta_{r_\gamma}$ is trivial, so the Bott-Chern cohomology class represented by $\iota_{S_\gamma}^*\beta_{r_\gamma}$ is also trivial.  This tells us that, in general, we should not expect $\beta_r$ to contain the same cohomological information as $\alpha_r$, a fact which we will exploit in the proof of Corollary \ref{cor:complex_submanifold_Kahler}.

For the Chern connection, we necessarily have
\[
  \nabla_{\bar Z}L_{r_\gamma}|_{S_\gamma}=\bar Z(e^{i\log|z_2|^2})\frac{\partial}{\partial z_1},
\]
so
\begin{equation}
\label{eq:nabla_bar_Z_L}
  \nabla_{\bar Z}L_{r_\gamma}|_{S_\gamma}=\frac{i}{\bar z_2}L_{r_\gamma},
\end{equation}
Using our given metric, we use the metric compatibility of $\nabla$ with \eqref{eq:omega_gamma_on_S} to compute
\begin{align*}
  \nabla_Z L_{r_\gamma}|_{S_\gamma}&=\frac{1}{f(\log|z_2|^2)}\left<\nabla_Z L_{r_\gamma},\frac{\partial}{\partial z_1}\right>\frac{\partial}{\partial z_1}+\frac{|z_2|^2}{s}\left<\nabla_Z L_{r_\gamma},\frac{\partial}{\partial z_2}\right>\frac{\partial}{\partial z_2}\\
  &=\frac{1}{f(\log|z_2|^2)}Z\left<L_{r_\gamma},\frac{\partial}{\partial z_1}\right>\frac{\partial}{\partial z_1}.
\end{align*}
Now \eqref{eq:omega_gamma_on_S} gives us
\begin{align*}
  Z\left<L_{r_\gamma},\frac{\partial}{\partial z_1}\right>\bigg|_{S_\gamma}&=Z\left(f(\log|z_2|^2)e^{i\log|z_2|^2}\right)\\
  &=\left(f'(\log|z_2|^2)+if(\log|z_2|^2)\right)e^{i\log|z_2|^2}\frac{1}{z_2},
\end{align*}
so
\begin{equation}
\label{eq:nabla_Z_L}
  \nabla_Z L_{r_\gamma}|_{S_\gamma}=\left(-2\tan(t^{-1}\log|z_2|^2)+i\right)\frac{1}{z_2}L_{r_\gamma}.
\end{equation}
Differentiating \eqref{eq:nabla_bar_Z_L} and \eqref{eq:nabla_Z_L} and using these formulas again to simplify the results, we have
\[
  \nabla_Z\nabla_{\bar Z}L_{r_\gamma}|_{S_\gamma}=-\left(2i\tan(t^{-1}\log|z_2|^2)+1\right)\frac{1}{|z_2|^2}L_{r_\gamma},
\]
and
\[
  \nabla_{\bar Z}\nabla_Z L_{r_\gamma}|_{S_\gamma}=-\left(2t^{-1}\sec^2(t^{-1}\log|z_2|^2)+\left(2i\tan(t^{-1}\log|z_2|^2)+1\right)\right)\frac{1}{|z_2|^2}L_{r_\gamma},
\]
so
\[
  R_\nabla(Z,\bar Z)L_{r_\gamma}|_{S_\gamma}=2t^{-1}\sec^2(t^{-1}\log|z_2|^2)\frac{1}{|z_2|^2}L_{r_\gamma}.
\]
Since we may choose $\nu_{\mathbb{C}}=|L_{r_\gamma}|^{-1}L_{r_\gamma}$, we have
\[
  \left<R_\nabla(Z,\bar Z)\nu_{\mathbb{C}},\nu_{\mathbb{C}}\right>|_{S_\gamma}=2t^{-1}\sec^2(t^{-1}\log|z_2|^2)\frac{1}{|z_2|^2}.
\]

Since $X_{r_\gamma}=\frac{1}{2}(L_{r_\gamma}+\bar L_{r_\gamma})$, \eqref{eq:nabla_bar_Z_L} and \eqref{eq:nabla_Z_L} also imply that
\[
  \nabla_Z X_{r_\gamma}|_{S_\gamma}=\left(-\tan(t^{-1}\log|z_2|^2)+\frac{i}{2}\right)\frac{1}{z_2}L_{r_\gamma}-\frac{i}{2z_2}\bar L_{r_\gamma}.
\]
We immediately obtain $\sff_\nabla(Z,Z)|_{S_\gamma}\equiv 0$, so the first sum on the left-hand side of \eqref{eq:DF_Index_boundary_geometry} must vanish, since we can take $W_1=|z_2|s^{-1/2}Z$ on $S_\gamma$.  Since $JX_{r_\gamma}=\frac{i}{2}(L_{r_\gamma}-\bar L_{r_\gamma})$, we have
\[
  \left<\nabla_Z X_{r_\gamma},JX_{r_\gamma}\right>|_{S_\gamma}=\left(i\tan(t^{-1}\log|z_2|^2)+1\right)\frac{1}{2z_2}|L_{r_\gamma}|^2.
\]
We have $|L_{r_\gamma}|^2=2|X_{r_\gamma}|^2$, so
\[
  \left<\nabla_Z \nu_{\mathbb{R}},J\nu_{\mathbb{R}}\right>|_{S_\gamma}=\left(i\tan(t^{-1}\log|z_2|^2)+1\right)\frac{1}{z_2},
\]
and hence
\[
  |\sff_\nabla(Z,J\nu_{\mathbb{R}})|^2|_{S_\gamma}=\left(\tan^2(t^{-1}\log|z_2|^2)+1\right)\frac{1}{|z_2|^2}=\sec^2(t^{-1}\log|z_2|^2)\frac{1}{|z_2|^2}.
\]
This gives us
\begin{multline*}
  \left(\frac{1}{2}\left<R_\nabla(Z,\bar Z)\nu_{\mathbb{C}},\nu_{\mathbb{C}}\right>-\frac{\eta}{1-\eta}|\sff_\nabla(Z,J\nu_{\mathbb{R}})|^2\right)\bigg|_{S_\gamma}=\\
  \left(\frac{1}{t}-\frac{\eta}{1-\eta}\right)\sec^2(t^{-1}\log|z_2|^2)\frac{1}{|z_2|^2}.
\end{multline*}
By construction, this is strictly positive, so \eqref{eq:DF_Index_boundary_geometry} must hold for some $C>0$ sufficiently small.  Hence, for any $0<\eta<DF(\Omega_\gamma)$, we have constructed a metric satisfying the requirements of Theorem \ref{thm:geometric_equivalence} for the worm domain $\Omega_\gamma$.  Indeed, any Hermitian metric satisfying \eqref{eq:omega_gamma_on_S} will satisfy \eqref{eq:DF_Index_boundary_geometry}, so Theorem \ref{thm:geometric_equivalence} suggests that such metrics may better capture the geometry of the worm domain than the traditional Euclidean metric.

As a second closely-related example, we will explicitly define a special case of the family of examples constructed by Diederich and Ohsawa in \cite{DiOh82}.  We let $M_\infty$ denote the quotient of the space $\mathbb{C}\times(\mathbb{C}\backslash\{0\})$ by the equivalence relation $(z_1,z_2)\sim(e^{i\log 4}z_1,2z_2)$.  This is a non-compact, non-Stein K\"ahler manifold with K\"ahler form given by $\omega_\infty=idz_1\wedge d\bar z_1+i\frac{1}{|z_2|^2}dz_2\wedge d\bar z_2$.  Define $\Omega_\infty\subset M_\infty$ by the real-analytic defining function $r_\infty(z)=\abs{z_1+e^{i\log|z_2|^2}}^2-1$.  The same proof used for the worm domain can be used to show that $\Omega_\infty$ is a relatively compact pseudoconvex domain, and $\partial\Omega_\infty$ is strictly pseudoconvex except on the set $S_\infty=\{(z_1,z_2)\in M_\infty:z_1=0\}$.  The equivalence relation defining $M_\infty$ guarantees that $S_\infty$ is a compact Riemann surface of genus $1$.  Since $S_\infty$ is a complex submanifold of $\partial\Omega_\infty$, Corollary \ref{cor:complex_submanifold_Kahler} implies that $DF(\Omega_\infty)=0$.  On the other hand, Diederich and Ohsawa show in \cite{DiOh82} that $\Omega_\infty$ is still Stein.

This is not the first known example of a Stein domain with vanishing Diederich-Forn{\ae}ss Index: see Section 4 of \cite{FuSh18} for an example of a Stein domain with a real-analytic, Levi-flat boundary admitting no bounded plurisubharmonic exhaustion functions.  The $L^2$ function theory for this domain is already known to be pathological, since the Cauchy-Riemann operator does not have closed range \cite{ChSh15}.  Note that Corollary \ref{cor:complex_submanifold_Kahler} implies that the Diederich-Forn{\ae}ss Index for any domain bounded by a Levi-flat hypersurface foliated by compact complex manifolds must be zero.  See \cite{DeFo20} for a general technique for constructing such manifolds and a wide range of examples.

\section{Third Derivatives in Hermitian Geometry}

\label{sec:Third_derivatives}

As noted in the introduction, derivatives of the Levi-form arise naturally in the study of the Diederich-Forn{\ae}ss Index, and these necessarily involve third derivatives of a defining function.  In this section, we will review the definition and basic properties of the Hessian in Hermitian geometry.  Then we will generalize this construction to obtain a third-order differential operator and study its symmetry properties.

The Chern connection $\nabla$ on $T^{1,0}(M)$ is the unique connection characterized by the properties that $\nabla$ is compatible with the Hermitian metric and $\nabla$ satisfies $\nabla_{\bar W}Z=0$ for all holomorphic sections $Z$ of $T^{1,0}(U)$, where $U\subset M$ is an open set and $W\in T^{1,0}(U)$.  If $Z,W\in T^{1,0}(U)$ are both holomorphic on $U\subset M$, then we have $T_\nabla(Z,\bar W)\equiv 0$.  Since $T_\nabla$ is a tensor and every vector can be locally expressed as a linear combination of holomorphic vector fields, we conclude that
\begin{equation}
\label{eq:torsion_free}
  T_\nabla(Z,\bar W)\equiv 0\text{ for all }Z,W\in T^{1,0}(M).
\end{equation}

For $X,Y\in T(M)$, we define the Hessian operator
\[
  \Hess_\nabla(X,Y)=XY-\nabla_X Y.
\]
\textit{A priori}, we must use a $C^1$ section $Y$ to define $\Hess_\nabla(X,Y)$.  If $\{V_j\}_{1\leq j\leq 2n}$ is a local basis of smooth sections for $T(M)$ on some open set $U\subset M$, then we can write $Y=\sum_{j=1}^{2n} a^j V_j$ for $\{a^j\}_{1\leq j\leq 2n}\subset C^1(U)$, and hence $\nabla_X Y=\sum_{j=1}^{2n}((X a^j)V_j+a^j\nabla_X V_j)$.  For any $f\in C^2(U)$ we have
\[
  \Hess_\nabla(X,Y)f=\sum_{j=1}^{2n} a^j\Hess_\nabla(X,V_j)f.
\]
Since this demonstrates that $\Hess_\nabla(X,Y)$ depends only on the point-wise values of $X$ and $Y$, we see that $\Hess_\nabla(X,Y)$ is well-defined for all vector fields $X,Y\in T(M)$.  The Hessian operator is not a symmetric tensor when the torsion of the connection is non-trivial, but we have
\begin{equation}
\label{eq:Hessian_symmetry}
  \Hess_\nabla(X,Y)-\Hess_\nabla(Y,X)=-T_\nabla(X,Y)\text{ for all }X,Y\in T^{1,0}(M).
\end{equation}
If we complexify so that this is $\mathbb{C}$-linear in $X$ and $Y$, the complex Hessian has an invariant meaning that is independent of the metric.  For $Z,W\in T^{1,0}(M)$ and $f\in C^2(M)$, the invariant definition of $\partial$ gives us
\[
  \ddbar f(Z,\bar W)=Z\bar W f-\dbar f([Z,\bar W]),
\]
so \eqref{eq:torsion_free} implies
\begin{equation}
\label{eq:complex_hessians_equal}
  \ddbar f(Z,\bar W)=\Hess_\nabla(Z,\bar W)f\text{ for all }Z,W\in T^{1,0}(M).
\end{equation}

We define a third-order generalization of the Hessian in the natural way:
\[
  H^3_\nabla(X_1,X_2,X_3)=X_1\Hess_\nabla(X_2,X_3)-\Hess_\nabla(\nabla_{X_1} X_2,X_3)-\Hess_\nabla(X_2,\nabla_{X_1}X_3)
\]
for all $X_1,X_2,X_3\in T(M)$.  Once again, we must initially assume that $X_2$ and $X_3$ are $C^1$ sections (we do not require $C^2$ sections since we have already shown that $\Hess_\nabla(X_2,X_3)$ only depends on the point-wise values of its arguments).  For $U$ and $\{V_j\}_{1\leq j\leq 2n}$ as before, we may write $X_1=\sum_{j=1}^{2n} a^j V_j$ and $X_2=\sum_{j=1}^{2n} b^j V_j$ for $\{a_j\}_{1\leq j\leq 2n}\subset C^1(U)$ and $\{b_j\}_{1\leq j\leq 2n}\subset C^1(U)$.  As before, we compute
\[
  H^3_\nabla(X_1,X_2,X_3)=\sum_{j,k=1}^{2n}a^j b^k H^3_\nabla(X_1,V_j,V_k),
\]
so $H^3_\nabla(X_1,X_2,X_3)$ only depends on the point-wise values of $X_2$ and $X_3$, and hence $H^3_\nabla(X_1,X_2,X_3)$ is well-defined for any vector fields $H^3_\nabla(X_1,X_3,X_3)$.  We may also complexify so that $H^3_\nabla$ is $\mathbb{C}$-linear in $X_1$, $X_2$, and $X_3$.  We summarize the symmetries of $H^3_\nabla$ in the following lemma:
\begin{lem}
\label{lem:H_3_symmetries}
  For $H^3_\nabla$ defined as above and $L,Z,W\in T^{1,0}(M)$, we have
  \begin{align}
    \label{eq:H_3_first_symmetry_unmixed}
    H^3_\nabla(L,Z,\bar W)-H^3_\nabla(Z,L,\bar W)&=-\Hess_\nabla(T_\nabla(L,Z),\bar W),\\
    \label{eq:H_3_first_symmetry_mixed}
    H^3_\nabla(\bar W,Z,L)-H^3_\nabla(Z,\bar W,L)&=-R_\nabla(\bar W,Z)L,\\
    \label{eq:H_3_second_symmetry_mixed}
    H^3_\nabla(L,Z,\bar W)-H^3_\nabla(L,\bar W,Z)&=0,\text{ and}\\
    \label{eq:H_3_third_symmetry_unmixed}
    H^3_\nabla(Z,\bar W,L)-H^3_\nabla(L,\bar W,Z)&=-\Hess_\nabla(\bar W,T_\nabla(Z,L)).
  \end{align}
\end{lem}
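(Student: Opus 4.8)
The plan is to verify each of the four identities by unwinding the definition of $H^3_\nabla$ and using the previously established facts: the symmetry \eqref{eq:Hessian_symmetry} of the Hessian, torsion-freeness \eqref{eq:torsion_free} in mixed arguments, and the definitions of $T_\nabla$ and $R_\nabla$. Since we have already shown $H^3_\nabla$ depends only on the pointwise values of its last two arguments, we are free to extend all three vector fields to local sections of whatever regularity is convenient and compute; this is the standard device that makes such tensor computations rigorous despite the operator being built from non-tensorial pieces.

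For \eqref{eq:H_3_first_symmetry_unmixed}, I would write out $H^3_\nabla(L,Z,\bar W)-H^3_\nabla(Z,L,\bar W)$ term by term. The leading terms give $L\,\Hess_\nabla(Z,\bar W)-Z\,\Hess_\nabla(L,\bar W)$; applying \eqref{eq:Hessian_symmetry} to rewrite $\Hess_\nabla(Z,\bar W)$ and $\Hess_\nabla(L,\bar W)$ is not directly available since those are already mixed, so instead I would keep the first arguments and note that the difference $L\,\Hess_\nabla(Z,\bar W)-Z\,\Hess_\nabla(L,\bar W)$ combines with the $\nabla_L Z$, $\nabla_Z L$ correction terms and the $\nabla_L\bar W$, $\nabla_Z\bar W$ correction terms. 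The key cancellation: the pieces involving $\bar W$ being differentiated collect into $-\Hess_\nabla(Z,\nabla_L\bar W)+\Hess_\nabla(L,\nabla_Z\bar W)$, and using $[L,Z]=\nabla_L Z-\nabla_Z L-T_\nabla(L,Z)$ together with the Jacobi-type bookkeeping, everything except $-\Hess_\nabla(T_\nabla(L,Z),\bar W)$ cancels. Identity \eqref{eq:H_3_third_symmetry_unmixed} is the mirror image, obtained by the same computation with the roles of the unmixed slots swapped to the last position, producing $-\Hess_\nabla(\bar W,T_\nabla(Z,L))$; I would present it as a corollary of the first computation rather than redoing it.

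For \eqref{eq:H_3_second_symmetry_mixed}, the point is that $\Hess_\nabla(Z,\bar W)f=\ddbar f(Z,\bar W)$ by \eqref{eq:complex_hessians_equal}, but more structurally: expanding $H^3_\nabla(L,Z,\bar W)-H^3_\nabla(L,\bar W,Z)$ gives $L\bigl(\Hess_\nabla(Z,\bar W)-\Hess_\nabla(\bar W,Z)\bigr)$ plus correction terms, and by \eqref{eq:Hessian_symmetry} applied in the mixed case together with \eqref{eq:torsion_free} we have $\Hess_\nabla(Z,\bar W)=\Hess_\nabla(\bar W,Z)$; the correction terms pair up as $-\Hess_\nabla(\nabla_L Z,\bar W)+\Hess_\nabla(\bar W,\nabla_L Z)$ and $-\Hess_\nabla(Z,\nabla_L\bar W)+\Hess_\nabla(\nabla_L\bar W,Z)$, each of which vanishes again by the mixed symmetry of the Hessian (since $\nabla_L Z\in T^{1,0}$ and $\nabla_L\bar W\in T^{0,1}$). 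Hence the whole difference is zero. For \eqref{eq:H_3_first_symmetry_mixed}, expanding $H^3_\nabla(\bar W,Z,L)-H^3_\nabla(Z,\bar W,L)$ and using $\Hess_\nabla(X,Y)=XY-\nabla_X Y$ twice, the second-derivative terms assemble into $\bar W\nabla_Z L-Z\nabla_{\bar W}L-\nabla_{[\bar W,Z]}L$ up to the torsion-free correction $T_\nabla(\bar W,Z)=[\ldots]$ pieces that cancel against the $\nabla_{\nabla_{\bar W}Z}L-\nabla_{\nabla_Z\bar W}L$ terms; what remains is exactly $\nabla_{\bar W}\nabla_Z L-\nabla_Z\nabla_{\bar W}L-\nabla_{[\bar W,Z]}L=R_\nabla(\bar W,Z)L$ with the sign as stated.

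The main obstacle is purely organizational: each identity requires carefully tracking six terms (three from each $H^3_\nabla$) against each other, and the cancellations hinge on correctly using $[X,Y]=\nabla_X Y-\nabla_Y X-T_\nabla(X,Y)$ in the right slot. I expect \eqref{eq:H_3_first_symmetry_unmixed} and \eqref{eq:H_3_first_symmetry_mixed} to be the ones demanding genuine care, since there the residual term is a nonzero tensor ($\Hess$ of torsion, resp. curvature) rather than zero, so no sign error can be hidden. The other two are easier: \eqref{eq:H_3_second_symmetry_mixed} follows entirely from mixed-slot symmetry of the Hessian plus \eqref{eq:torsion_free}, and \eqref{eq:H_3_third_symmetry_unmixed} is structurally identical to \eqref{eq:H_3_first_symmetry_unmixed}.
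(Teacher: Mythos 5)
Your term-by-term expansion is a legitimate alternative to the paper's argument, and your treatments of \eqref{eq:H_3_first_symmetry_mixed} and \eqref{eq:H_3_second_symmetry_mixed} are sound. But in the unmixed identities \eqref{eq:H_3_first_symmetry_unmixed} and \eqref{eq:H_3_third_symmetry_unmixed} there is a gap: the cancellation you describe is incomplete. Expanding carefully, one finds
\[
H^3_\nabla(L,Z,\bar W)-H^3_\nabla(Z,L,\bar W)=-\Hess_\nabla(T_\nabla(L,Z),\bar W)-R_\nabla(L,Z)\bar W,
\]
where the residual curvature term collects from $\nabla_Z\nabla_L\bar W-\nabla_L\nabla_Z\bar W+\nabla_{[L,Z]}\bar W$, and the same extra term survives in \eqref{eq:H_3_third_symmetry_unmixed}. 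This $R_\nabla(L,Z)\bar W$ does not disappear by algebraic bookkeeping; it vanishes because the Chern connection's curvature is of type $(1,1)$, so $R_\nabla(L,Z)=0$ whenever $L,Z\in T^{1,0}(M)$. That is a genuine input you must state and justify, not a consequence of rearranging with the torsion identity. As written, the claim that everything except $-\Hess_\nabla(T_\nabla(L,Z),\bar W)$ cancels is false.

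The paper avoids the issue by reducing at the outset to \emph{holomorphic} representatives of $L,Z,W$ (legitimate since both sides of each identity are tensorial in all three slots). For holomorphic fields, $\nabla_{\bar V}$ of any holomorphic section vanishes, $[Z,\bar W]=0$, $\nabla_{[L,Z]}\bar W=0$, and $\nabla_{\bar W}[Z,L]=0$; the relevant $H^3_\nabla$ expressions collapse to two or three terms apiece and all four identities drop out in a line each. This buys the $(1,1)$-type of the curvature for free, precisely the fact your route needs to invoke separately. Your approach is workable once you add that step; without it, the paper's reduction is both shorter and self-contained.
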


\begin{proof}
  Observe that each identity in Lemma \ref{lem:H_3_symmetries} depends only on the point-wise values of $L$, $Z$, and $W$, so it suffices to proves these identities for vector fields that are locally holomorphic.  Henceforth, we let $Z,W,L\in T^{1,0}(U)$ be holomorphic vector fields on some neighborhood $U$ in $M$.  For holomorphic vector fields, we have $R_\nabla(\bar W,Z)L=\nabla_{\bar W}\nabla_Z L$.  Hence, we have
  \begin{align*}
    H^3_\nabla(L,Z,\bar W)&=LZ\bar W-\Hess_\nabla(\nabla_L Z,\bar W),\\
    H^3_\nabla(L,\bar W,Z)&=L\bar W Z-\Hess_\nabla(\bar W,\nabla_L Z),\text{ and}\\
    H^3_\nabla(\bar W,Z,L)&=\bar W Z L-\Hess_\nabla(\bar W,\nabla_Z L)-R_\nabla(\bar W,Z)L.
  \end{align*}
  These formulas allow us to immediately compute \eqref{eq:H_3_first_symmetry_unmixed} and \eqref{eq:H_3_first_symmetry_mixed}.  If we use \eqref{eq:Hessian_symmetry} and \eqref{eq:torsion_free} to simplify, we obtain \eqref{eq:H_3_second_symmetry_mixed}.  Since $[Z,L]$ is also necessarily holomorphic we have $\nabla_{\bar W}[Z,L]=0$, and hence \eqref{eq:H_3_third_symmetry_unmixed} follows.
\end{proof}

One of our primary applications of Lemma \ref{lem:H_3_symmetries} will be the special case in which we have a section $X$ of $T(M)$ satisfying $X=\frac{1}{2}(L+\bar L)$ for some $L\in T^{1,0}(M)$.  Then we have
\begin{multline*}
  H^3_\nabla(Z,\bar W,X)-H^3_\nabla(X,Z,\bar W)=\\\frac{1}{2}(H^3_\nabla(Z,\bar W,L)-H^3_\nabla(L,Z,\bar W)+H^3_\nabla(Z,\bar W,\bar L)-H^3_\nabla(\bar L,Z,\bar W)),
\end{multline*}
so \eqref{eq:H_3_first_symmetry_mixed} and \eqref{eq:H_3_second_symmetry_mixed} give us
\begin{multline*}
  H^3_\nabla(Z,\bar W,X)-H^3_\nabla(X,Z,\bar W)=\\\frac{1}{2}(H^3_\nabla(Z,\bar W,L)-H^3_\nabla(L,\bar W,Z)-R_\nabla(Z,\bar W)\bar L+H^3_\nabla(\bar W,Z,\bar L)-H^3_\nabla(\bar L,Z,\bar W)),
\end{multline*}
and two applications of \eqref{eq:H_3_third_symmetry_unmixed} give us
\begin{multline}
\label{eq:H_3_symmetry_cycle}
  H^3_\nabla(Z,\bar W,X)-H^3_\nabla(X,Z,\bar W)=\\-\frac{1}{2}\left(\Hess_\nabla(\bar W,T_\nabla(Z,L))+R_\nabla(Z,\bar W)\bar L+\Hess_\nabla(Z,T_\nabla(\bar W,\bar L))\right)
\end{multline}
for all $Z,W,L\in T^{1,0}(M)$ and $X=\frac{1}{2}(L+\bar L)$.

\section{Regularity Properties of \texorpdfstring{$C^2$}{C2} Domains}

\label{sec:regularity_properties}

In this section, we will discuss the basic properties of admissible defining functions.  As noted in the introduction, our primary goal for this section is to develop the tools necessary to take derivatives of the Levi-form in directions transverse to the boundary, even though our defining functions are only $C^2$.  Although we will ultimately use these results on Hermitian manifolds, we do not need the complex structure for the results of this section, so we will work on real Riemannian manifolds.  We first define those transverse directions for which this is possible.  Note that the following definition and lemma are completely independent of the choice of metric.
\begin{defn}
\label{defn:compatibility}
  Let $M$ be a $C^3$ manifold of real dimension $n\geq 2$ and let $\Omega\subset M$ be a relatively compact domain with $C^2$ boundary.  Given a neighborhood $U$ of $\partial\Omega$, we say that a $C^1$ section $X$ of $T(U)$ and a defining function $r\in C^2(U)$ for $\Omega$ are compatible if
  \begin{enumerate}
    \item
    \begin{equation}
    \label{eq:compatible_hypothesis_Xr}
      Xr\equiv 1\text{ on }U\text{, and}
    \end{equation}
    \item
    \begin{equation}
    \label{eq:compatible_hypothesis_commutator}
      dr([X,Y])\in C^1(U)\text{ for any }C^2\text{ section }Y\text{ of }T(U).
    \end{equation}
  \end{enumerate}
\end{defn}

Our fundamental lemma for compatible defining functions and sections is the following:
\begin{lem}
\label{lem:third_derivatives_exist}
  Let $M$ be a $C^3$ manifold of real dimension $n\geq 2$ and let $\Omega\subset M$ be a relatively compact domain with $C^2$ boundary equipped with a defining function $r\in C^2(U)$ on some neighborhood $U$ of $\partial\Omega$ and a $C^1$ section $X$ of $T(U)$ that is compatible with $r$.  Then for any affine connection $\nabla$ on $T(M)$ and every pair of continuous vector fields $Y_1,Y_2\in T(U)$, $H^3_\nabla(X,Y_1,Y_2)r$ exists and is continuous on $U$.
\end{lem}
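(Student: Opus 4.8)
The plan is to first use the tensoriality of $H^3_\nabla$ in its last two slots, established just above, to reduce to the case where $Y_1=V_j$ and $Y_2=V_k$ belong to a smooth local frame $\{V_j\}$: since then $H^3_\nabla(X,Y_1,Y_2)r=\sum_{j,k}a^jb^k\big(H^3_\nabla(X,V_j,V_k)r\big)$ with $a^j,b^k\in C(U)$ the continuous components of $Y_1,Y_2$, it suffices to show that each $H^3_\nabla(X,V_j,V_k)r$ exists and is continuous. Expanding the definition,
\[
  H^3_\nabla(X,V_j,V_k)r=X\!\left(\Hess_\nabla(V_j,V_k)r\right)-\Hess_\nabla(\nabla_XV_j,V_k)r-\Hess_\nabla(V_j,\nabla_XV_k)r,
\]
and since $r\in C^2$ the map $\Hess_\nabla(\cdot,\cdot)r$ is a continuous bilinear form in its (continuous) vector-field arguments, while $\nabla_XV_j$ and $\nabla_XV_k$ are continuous because $X$ is $C^1$ (here one uses, as in all cases of interest, that $\nabla$ has $C^1$ coefficients); thus the last two terms are continuous. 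Writing $\Hess_\nabla(V_j,V_k)r=V_j(V_kr)-(\nabla_{V_j}V_k)r$ with $(\nabla_{V_j}V_k)r=dr(\nabla_{V_j}V_k)$ of class $C^1$, and then passing to a coordinate patch and again invoking $C^0$-bilinearity of $\Hess_\nabla(\cdot,\cdot)r$, the whole lemma is reduced to showing that $X(\partial_l\partial_m r)$ exists and is continuous for every pair of coordinate directions $\partial_l,\partial_m$.

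Here the hypotheses of Definition \ref{defn:compatibility} enter. Because $r\in C^2$, Clairaut's theorem gives the bracket identity $X(Yr)-Y(Xr)=[X,Y]r$ for the $C^1$ field $X$ and any smooth $Y$; taking $Y=\partial_l$ and using $Xr\equiv1$ from \eqref{eq:compatible_hypothesis_Xr} yields $X(\partial_lr)=dr([X,\partial_l])$, which by \eqref{eq:compatible_hypothesis_commutator} (after a routine cutoff localization, since $[X,\partial_l]$ is only $C^0$) lies in $C^1(U)$. Thus $\phi:=\partial_lr$ satisfies $\phi\in C^1$ and $X\phi\in C^1$, and we are reduced to the following local real-analysis statement: if $\phi\in C^1$, $X$ is a $C^1$ vector field, and $X\phi\in C^1$, then for each coordinate direction $\partial_m$ the directional derivative $X(\partial_m\phi)$ exists and equals $\partial_m(X\phi)+[X,\partial_m]\phi$, hence is continuous. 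Granting this, applying it with $\phi=\partial_lr$ produces $X(\partial_l\partial_m r)$, and reversing the chain of reductions proves the lemma.

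The main obstacle is precisely this last statement, since it is where the $C^2$ hypothesis is delicate: $\partial_m\phi$ is only continuous and $X$ only $C^1$, so $X(\partial_m\phi)$ is not covered by the chain rule. The plan is to flow along $X$. Let $\Psi_t$ denote the jointly $C^1$ local flow of $X$ and set $G(t,q)=\phi(\Psi_t(q))$; then $G$ is jointly $C^1$ with $\partial_tG=(X\phi)\circ\Psi_t$, which is again jointly $C^1$ because $X\phi\in C^1$, so $\partial_{q^m}(\partial_tG)$ exists and is continuous. The classical theorem on equality of mixed partial derivatives under weak hypotheses then shows that $\partial_t(\partial_{q^m}G)$ exists and equals $\partial_{q^m}(\partial_tG)$. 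Finally, expanding $\partial_{q^m}G(t,q)=\sum_k(\partial_k\phi)(\Psi_t(q))\,\partial_{q^m}\Psi_t^k(q)$ by the chain rule, using that the flow Jacobian $\partial_{q^m}\Psi_t^k$ is $C^1$ in $t$ with value $\delta^k_m$ at $t=0$, and isolating the $k=m$ term (the remaining sum being a product of a continuous factor and a $C^1$ factor vanishing at $t=0$, hence differentiable there), one finds that $t\mapsto(\partial_m\phi)(\Psi_t(q))$ is differentiable at $t=0$ with derivative $\partial_m(X\phi)(q)-\sum_k(\partial_k\phi)(q)\,\partial_mX^k(q)=\partial_m(X\phi)(q)+([X,\partial_m]\phi)(q)$; continuity in $q$ of the right-hand side is immediate, completing the plan.
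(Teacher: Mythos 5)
Your proposal is correct, and the crux of it is the same mechanism used by the paper, but you package it quite differently. The paper's proof is essentially a two-line reduction to the appendix Lemma~\ref{lem:characteristic_regularity}: with $f_1=Y_2r$ (which is $C^1$ because $Y_2$ is $C^2$ and $r\in C^2$, and which has $Xf_1=[X,Y_2]r\in C^1(U)$ by compatibility), that lemma immediately yields existence and continuity of $X(Y_1(Y_2r))=[X,Y_1](Y_2r)+Y_1([X,Y_2]r)$, and then $H^3_\nabla(X,Y_1,Y_2)r$ is this plus obviously continuous Hessian correction terms; the final step passing from $C^2$ sections to merely continuous $Y_1,Y_2$ via tensoriality matches yours. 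You instead bypass Lemma~\ref{lem:characteristic_regularity} entirely: you reduce to coordinate second derivatives $X(\partial_l\partial_m r)$, use compatibility to see that $\phi=\partial_l r$ has $\phi\in C^1$ and $X\phi=dr([X,\partial_l])\in C^1$, and then you re-derive the required regularity by flowing along $X$, invoking a Young/Schwarz theorem on mixed partials of $G(t,q)=\phi(\Psi_t(q))$, and peeling off the $k\ne m$ terms via the variational equation for the flow Jacobian. This is precisely the content underlying the proof of Lemma~\ref{lem:characteristic_regularity} (as the paper's appendix also notes), so the underlying idea is the same; what your route buys is a self-contained argument that does not presuppose the appendix lemma, at the cost of repeating a computation the paper establishes once for reuse in Lemma~\ref{lem:signed_distance_function} and elsewhere. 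Your chain of reductions, the bracket identity from Clairaut with $Xr\equiv 1$, the cutoff remark for applying~\eqref{eq:compatible_hypothesis_commutator} to $\partial_l$, the application of the weak Schwarz theorem, and the isolation of the $k=m$ term are all sound.
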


\begin{proof}
  Suppose first that $Y_1$ and $Y_2$ are $C^2$ sections.  Since $X$ is compatible with $r$, Definition \ref{defn:compatibility} implies that $X(Y_2 r)=[X,Y_2]r\in C^1(U)$.  By Lemma \ref{lem:characteristic_regularity}, $X(Y_1(Y_2 r))$ exists on $U$ and $X(Y_1(Y_2 r))=[X,Y_1](Y_2 r)+Y_1([X,Y_2]r)$ on $U$, so $X(Y_1(Y_2 r))$ must be continuous on $U$.  Hence, $H^3_\nabla(X,Y_1,Y_2)r$ exists and is continuous on $U$.  For continuous vector fields $Y_1$ and $Y_2$, we may write these as continuous linear combinations of $C^2$ sections and use the fact that $H^3_\nabla(X,Y_1,Y_2)r$ is a tensor with respect to $Y_1$ and $Y_2$ to complete the proof.
\end{proof}

To build a useful class of examples, we will need to impose a metric on $M$.  The following generalizes known results about the signed distance function (e.g., \cite{KrPa81} for the special case in which $h\equiv 0$ and $M=\mathbb{R}^n$).
\begin{lem}
\label{lem:signed_distance_function}
  Let $M$ be a $C^3$ manifold of real dimension $n\geq 2$ equipped with a Riemannian metric and let $\Omega\subset M$ be a relatively compact domain with $C^2$ boundary.  Let $h$ be a $C^2$ function on some neighborhood of $\partial\Omega$.  Then there exists a neighborhood $U$ of $\partial\Omega$ and a defining function $r\in C^2(U)$ for $\Omega$ such that
  \begin{equation}
  \label{eq:eikonal}
    |dr|\equiv e^{h}\text{ on }U,
  \end{equation}
  and such that if $X_r$ is the unique $C^1$ section $X_r$ of $T(U)$ satisfying
  \begin{equation}
  \label{eq:dual_normal}
    e^{-2h}dr(Y)=\left<Y,X_r\right>\text{ on }U\text{ for all }Y\in T(U),
  \end{equation}
  then $X_r$ is compatible with $r$ on $U$.
\end{lem}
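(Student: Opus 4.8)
\textbf{Proof plan for Lemma \ref{lem:signed_distance_function}.}

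The plan is to construct $r$ by flowing the boundary along an appropriate vector field, modeled on the construction of the signed distance function. First I would set up the standard machinery: near $\partial\Omega$, pick any $C^2$ defining function $\rho_0$ (e.g. the signed distance function itself, which is $C^2$ near $\partial\Omega$ by the classical regularity result, or a smooth one if $\partial\Omega$ were smoother), and observe that the desired $r$ should satisfy the eikonal-type equation $|dr| = e^h$. The natural approach is to define $r$ as a solution of this Hamilton--Jacobi equation with initial data $r = 0$ on $\partial\Omega$, which is solved by the method of characteristics: the characteristics are geodesics (reparametrized according to $h$), and $r(x)$ is essentially the arrival time of the characteristic through $x$ emanating orthogonally from $\partial\Omega$. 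Concretely, I would let $\Phi_t$ be the flow of the normalized gradient field of $\rho_0$ (or more cleanly, parametrize points near $\partial\Omega$ as $\exp_{p}(s\,\nu_\mathbb{R}(p))$ for $p \in \partial\Omega$, $s$ small), and then define $r$ implicitly by $\int_0^{r(x)} e^{h(\gamma(\tau))}\,d\tau = (\text{signed arc length parameter of } x)$, where $\gamma$ is the unit-speed geodesic through $x$ normal to $\partial\Omega$. When $h \equiv 0$ this recovers the signed distance function.

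The key steps, in order, are: (1) Establish that the normal exponential map $(p,s) \mapsto \exp_p(s\,\nu_\mathbb{R}(p))$ is a $C^1$ diffeomorphism from a neighborhood of $\partial\Omega \times \{0\}$ onto a neighborhood $U$ of $\partial\Omega$ — here one must be careful because $\partial\Omega$ is only $C^2$, so $\nu_\mathbb{R}$ is only $C^1$ and the map is only $C^1$; this is exactly the content of the Krantz--Parks-type result cited, and I would invoke the appendix. (2) Define $r$ on $U$ via the reparametrization by $h$ as above; since $h$ is $C^2$ and the geodesic flow is smooth, and the signed arc-length coordinate $s$ is $C^1$ on $U$ (with $|ds| \equiv 1$), the function $r$ is $C^2$ in the $s$-direction and $C^1$ in the $\partial\Omega$-directions — I need to argue carefully that $r \in C^2(U)$, which should follow because the defining equation for $r$ involves only $h$ composed with the (smooth) geodesic flow and the $C^1$ function $s$, differentiated appropriately. (3) Verify $|dr| = e^h$: along each geodesic, $\partial r/\partial s = e^{-h}\cdot(\text{nothing})$... more precisely $ds = e^h\,dr$ in the normal direction and $dr$ annihilates the level sets of $r$ which coincide with level sets of $s$, so $|dr| = e^{-h}|ds|\cdot(\ldots)$ — I'd set this up so that $dr = e^{-h}\,ds$ along characteristics, giving $|dr| = e^{-h}\cdot e^{?}$; the bookkeeping needs $|dr| \equiv e^h$, so in fact the reparametrization should be $s = \int_0^r e^{-h}$, i.e. $dr = e^h\,ds$, hence $|dr| = e^h|ds| = e^h$. (4) Define $X_r$ by \eqref{eq:dual_normal} and check compatibility: $X_r r = e^{-2h}|dr|^2 = e^{-2h}e^{2h} = 1$, giving \eqref{eq:compatible_hypothesis_Xr} immediately; for \eqref{eq:compatible_hypothesis_commutator}, note $X_r$ is (up to the scalar $e^{-2h}$, which is $C^2$) the metric-dual of $dr$, i.e. essentially $\nabla r$ rescaled, so $X_r = e^{-2h}\nabla^g r$ where $\nabla^g r$ is the Riemannian gradient — and along characteristics $\nabla^g r$ is proportional to the geodesic velocity field, which integrates the geodesic flow, so $dr([X_r, Y]) = X_r(Yr) - Y(X_r r) = X_r(Yr) - Y(1) = X_r(Yr)$, and this has the required regularity because $Yr \in C^1$ and differentiating it along the characteristic direction $X_r$ stays continuous — this is precisely where Lemma \ref{lem:characteristic_regularity} (used already in Lemma \ref{lem:third_derivatives_exist}) does the work.

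The main obstacle I anticipate is the low regularity: everything would be routine if $\partial\Omega$ were $C^3$, but with a $C^2$ boundary the unit normal $\nu_\mathbb{R}$ is only $C^1$, so the normal exponential map is only $C^1$, and it is not immediately obvious that the resulting $r$ is $C^2$ rather than merely $C^1$. The resolution is that differentiating $r$ \emph{once} in any direction lands on a $C^1$ object (because $dr = e^h\,ds$ and both factors are $C^1$), and the \emph{second} derivative only needs to be controlled because we differentiate along the characteristic/normal direction — which, by the structure of the geodesic flow being smooth and $s$ satisfying its own eikonal equation $|ds| \equiv 1$, enjoys the extra regularity captured by Lemma \ref{lem:characteristic_regularity}. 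So the heart of the argument is organizing the computation so that every ``bad'' second derivative is in fact a derivative of a $C^1$ function \emph{along the good (characteristic) direction}, reducing everything to the already-established characteristic regularity lemma; the verification of \eqref{eq:eikonal} and \eqref{eq:compatible_hypothesis_Xr} is then a short computation, and \eqref{eq:compatible_hypothesis_commutator} follows as sketched above.
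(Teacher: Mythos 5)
Your proposal shares the paper's broad outline (construct $r$ as a solution to the eikonal equation, then use Lemma~\ref{lem:characteristic_regularity} to upgrade regularity), but there are two genuine gaps.

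\textbf{The concrete construction of $r$ is wrong when $h$ is not constant along the level sets of the signed distance.} You propose to parametrize $U$ by the normal exponential map of $g$, i.e., $(p,s)\mapsto\exp_p(s\,\nu_{\mathbb{R}}(p))$, and then define $r$ implicitly by reparametrizing the arc-length variable $s$ along each $g$-geodesic according to $h$. If you do this, the partial derivative of $r$ in the $\dot\gamma$-direction has the right size, but $dr$ also has a component tangent to the level sets of $s$ coming from the $p$-dependence of $\int e^{-h}$, and in general $|dr|^2$ acquires a non-trivial tangential contribution. In fact, the level sets of the true solution of $|dr|_g=e^{h}$ are the level sets of the signed distance with respect to the conformal metric $e^{2h}g$, and the characteristics are $e^{2h}g$-geodesics, which are \emph{different curves} from $g$-geodesics (not just reparametrizations) unless $h$ has vanishing tangential gradient. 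Your opening remark that the characteristics are ``geodesics (reparametrized according to $h$)'' is the source of the error. You should either invoke the Hamilton--Jacobi method of characteristics directly (as the paper does, taking the characteristics for the Hamiltonian $H(x,p)=\tfrac12(|p|_g^2-e^{2h})$), or work with the normal exponential map for the conformal metric.

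\textbf{The application of Lemma~\ref{lem:characteristic_regularity} is not justified as stated.} To show $r\in C^2$ and to establish \eqref{eq:compatible_hypothesis_commutator}, you write $dr([X_r,Y])=X_r(Yr)$ and assert ``this has the required regularity because \dots this is precisely where Lemma~\ref{lem:characteristic_regularity} does the work.'' But Lemma~\ref{lem:characteristic_regularity} only applies once you know the functions $f_j=Y_j r$ satisfy a first-order linear ODE system along $X_r$ with $C^1$ coefficients, i.e., $X_r f_j=\sum_k A_j^k f_k+B_j$, and you have not produced such a system. The paper's essential step is the Levi-Civita computation: using that $D$ is torsion-free one gets $[X_r,Y]r=(D_{X_r}Y)r-(D_Y X_r)r$, and then metric-compatibility of $D$ together with $|X_r|^2\equiv e^{-2h}$ gives $(D_Y X_r)r=-(Yh)$, so that $[X_r,Y]r=((D_{X_r}Y)+(Yh)X_r)r$. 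Since $(D_{X_r}Y)+(Yh)X_r$ is a $C^1$ vector field, expanding it in a basis $\{Y_k\}$ (whose last entry is $X_r$ and using $X_r r=1$) yields exactly the required linear system and the $C^1$ inhomogeneity. Without this computation your invocation of the characteristic-regularity lemma is circular. You should also note, as the paper does, that the hypothesis $f_j|_{\partial\Omega}\in C^1(\partial\Omega)$ of Lemma~\ref{lem:characteristic_regularity} requires an auxiliary argument: write $r=f\rho$ for an arbitrary $C^2$ defining function $\rho$ and show $f|_{\partial\Omega}=(X_r\rho)^{-1}\in C^1(\partial\Omega)$, which gives $Y_jr|_{\partial\Omega}\in C^1(\partial\Omega)$ even before $r$ is known to be $C^2$.
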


\begin{proof}
  The existence of a unique $C^1$ defining function $r$ satisfying the eikonal equation \eqref{eq:eikonal} on some neighborhood of $\partial\Omega$ follows from the method of characteristics.  Given $X_r$ characterized by \eqref{eq:dual_normal}, let $U$ be a sufficiently small neighborhood on which Lemma \ref{lem:characteristic_regularity} holds.

  Note that \eqref{eq:dual_normal} implies that two linear operators on $T(U)$ are identical and hence must have the same norm.  This means that $e^{-2h}|dr|=|X_r|$, but then \eqref{eq:eikonal} implies $|X_r|=e^{-h}$.  Now \eqref{eq:dual_normal} with $Y=X_r$ gives us
  \[
    e^{-2h}dr(X_r)=|X_r|^2=e^{-2h},
  \]
  and hence $X_r r\equiv 1$ on $U$, so we have \eqref{eq:compatible_hypothesis_Xr}.

  Let $\rho$ be an arbitrary $C^2$ defining function for $\Omega$ on $U$.  Then $r=f\rho$ on $U$ for some positive-valued function $f\in C^1(U\backslash\partial\Omega)\cap C(U)$, and $dr=f d\rho$ on $\partial\Omega$.  Since \eqref{eq:compatible_hypothesis_Xr} implies $1=f X_r\rho$ on $\partial\Omega$, we have $f=(X_r\rho)^{-1}$ on $\partial\Omega$, and hence $f|_{\partial\Omega}\in C^1(\partial\Omega)$.  If $Y$ is a $C^1$ section of $T(U)$, then we have $Yr=fY\rho$ on $\partial\Omega$, so $Yr|_{\partial\Omega}\in C^1(\partial\Omega)$ even though $r$ itself is not yet known to be $C^2$.

  To show that $r\in C^2(U)$, we let $D$ denote the Levi-Civita connection on $T(M)$.  For a $C^2$ section $Y$ of $T(M)$, since the Levi-Civita connection is torsion-free we have
  \[
    [X_r,Y]r=(D_{X_r} Y)r-(D_Y X_r)r.
  \]
  Since $Y$ is a $C^2$ section, $(D_{X_r} Y)$ is a $C^1$ section.  On the other hand, \eqref{eq:dual_normal} implies
  \[
    (D_Y X_r)r=dr(D_Y X_r)=e^{2h}\left<D_Y X_r,X_r\right>.
  \]
  Since $|X_r|^2\equiv e^{-2h}$ and $D$ is compatible with the metric, we have
  \[
    -2e^{-2h}Yh=Y|X_r|^2=\left<D_Y X_r,X_r\right>+\left<X_r,D_Y X_r\right>,
  \]
  and hence \eqref{eq:dual_normal} coupled with symmetry of the inner product implies
  \[
    -2e^{-2h}Yh=2e^{-2h}dr(D_Y X_r).
  \]
  This implies that
  \begin{equation}
  \label{eq:D_Y_X_r}
    (D_Y X_r)r\equiv -(Yh)\text{ on }U.
  \end{equation}
  We may write
  \begin{equation}
  \label{eq:r_signed_distance_commutator_identity}
    [X_r,Y]r=((D_{X_r} Y)+(Yh)X_r)r,
  \end{equation}
  but $((D_{X_r} Y)+(Yh)X_r)$ is a $C^1$ section.  Note that Lemma \ref{lem:characteristic_regularity} implies that $X_r(Yr)$ exists on $U$ and satisfies $X_r(Y r)=[X_r,Y]r$.  Hence \eqref{eq:r_signed_distance_commutator_identity} implies
  \[
    X_r(Y r)=((D_{X_r} Y)+(Yh)X_r)r.
  \]
  If $\{Y_j\}_{j=1}^n$ is a local basis of $C^2$ sections for $T(\tilde U)$ on some neighborhood $\tilde U\subset U$, then we may set $f_j=Y_j r$ and use Lemma \ref{lem:characteristic_regularity} to show that each $Y_j r\in C^1(\tilde U)$.  This means that $r\in C^2(\tilde U)$.  Since $\tilde U$ is arbitrary, $r\in C^2(U)$.  Once we know that $r\in C^2(U)$, \eqref{eq:r_signed_distance_commutator_identity} implies \eqref{eq:compatible_hypothesis_commutator}.
\end{proof}

In fact, we have the following equivalence:
\begin{lem}
\label{lem:signed_distance_function_equivalence}
  Let $M$ be a $C^3$ manifold of real dimension $n\geq 2$ equipped with a Riemannian metric and let $\Omega\subset M$ be a relatively compact domain with $C^2$ boundary.  Let $U$ be a neighborhood of $\partial\Omega$, let $r\in C^2(U)$ be a defining function for $\Omega$, and let $X_r$ be the unique $C^1$ section of $T(U)$ satisfying
  \begin{equation}
  \label{eq:metric_compatibility}
    |dr|^{-2}dr(Y)=\left<Y,X_r\right>\text{ on }U\text{ for all }Y\in T(U).
  \end{equation}
  Then $X_r$ is compatible with $r$ on $U$ if and only if $|dr|\in C^2(U)$.
\end{lem}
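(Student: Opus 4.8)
The plan is to relate compatibility of $X_r$ with $r$ to the $C^2$-regularity of $|dr|$ by writing $h = \log|dr|$ and reducing to Lemma \ref{lem:signed_distance_function} and its proof. For the forward direction, suppose $X_r$ is compatible with $r$. The defining relation \eqref{eq:metric_compatibility} is exactly \eqref{eq:dual_normal} with $h := \log|dr|$, so $X_r$ plays the role of the vector field called $X_r$ in Lemma \ref{lem:signed_distance_function}; the catch is that \emph{a priori} $h$ is only continuous (since $|dr|$ is only continuous for a general $C^2$ function $r$), so I cannot quote that lemma directly. Instead I would argue as follows. Compatibility gives $X_r r \equiv 1$, so $|dr|^2 = dr(X_r)^{-1}\cdot\big(\text{something}\big)$; more precisely, pairing \eqref{eq:metric_compatibility} against $X_r$ yields $|dr|^{-2} = |X_r|^2$, i.e. $|dr| = |X_r|^{-1}$, and $|X_r|^2 = \langle X_r, X_r\rangle$ is continuous, which recovers continuity of $|dr|$ but not yet $C^2$. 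To get $C^2$ I would bootstrap off the compatibility hypothesis \eqref{eq:compatible_hypothesis_commutator}: using the Levi-Civita connection $D$ and torsion-freeness, for any $C^2$ section $Y$,
\[
  dr(D_Y X_r) = |dr|^{2}\langle D_Y X_r, X_r\rangle = \tfrac{1}{2}|dr|^{2}\, Y|X_r|^2 = -\tfrac{1}{2}|dr|^{2}\, Y\big(|dr|^{-2}\big)\cdot|dr|^{-2}\cdot|dr|^{2},
\]
which I would rearrange — exactly as in the derivation of \eqref{eq:D_Y_X_r} read backwards — to express $Y\log|dr|$ in terms of $(D_Y X_r)r$ and $dr([X_r,Y])$, both of which are $C^1$ by \eqref{eq:compatible_hypothesis_commutator} and the smoothness of $D$; hence $Y\log|dr| \in C^1(U)$ for every $C^2$ section $Y$, so $\log|dr| \in C^2(U)$ and therefore $|dr| \in C^2(U)$.

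For the converse, suppose $|dr| \in C^2(U)$. Then $h := \log|dr|$ is a genuine $C^2$ function on $U$, and \eqref{eq:metric_compatibility} is precisely \eqref{eq:dual_normal}. Now Lemma \ref{lem:signed_distance_function} produces \emph{some} $C^2$ defining function $\tilde r$ satisfying the eikonal equation $|d\tilde r| = e^{h}$ together with a compatible vector field $X_{\tilde r}$. The remaining task is to identify $\tilde r$ with $r$ (up to the usual normalization), i.e. to see that $r$ itself already satisfies the eikonal equation. But that is immediate: $|dr| = e^{h}$ holds on $U$ by the definition of $h$, and the method-of-characteristics construction in the proof of Lemma \ref{lem:signed_distance_function} shows the solution of the eikonal equation with the given boundary behavior is unique near $\partial\Omega$, so after shrinking $U$ we have $r = \tilde r$; consequently $X_r = X_{\tilde r}$ is compatible with $r$. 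Alternatively, and more cleanly, I would simply re-run the second half of the proof of Lemma \ref{lem:signed_distance_function} verbatim with the \emph{given} $C^2$ function $r$ in place of the constructed one — nothing in that argument used the eikonal equation beyond $|dr| = e^h$ with $h \in C^2$, so it directly establishes \eqref{eq:compatible_hypothesis_Xr} and \eqref{eq:compatible_hypothesis_commutator} for $X_r$.

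The main obstacle is the forward direction, specifically the circularity one must avoid: compatibility is phrased in terms of $X_r$, whose very definition \eqref{eq:metric_compatibility} involves $|dr|$, so one has to extract regularity of $|dr|$ without assuming it. The key device is that \eqref{eq:compatible_hypothesis_commutator} lets $dr([X_r,Y])$ be used as a known $C^1$ quantity, and combining this with $X_r(Yr) = [X_r,Y]r$ (Lemma \ref{lem:characteristic_regularity}) and the metric-compatibility computation above isolates $Y\log|dr|$ as a $C^1$ function for every $C^2$ test field $Y$ — which is the definition of $\log|dr| \in C^2(U)$. Everything else is bookkeeping with the Levi-Civita connection that has already appeared in Lemma \ref{lem:signed_distance_function}.
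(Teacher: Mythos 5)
Your proposal is correct and follows essentially the same route as the paper: both directions use $|X_r| = |dr|^{-1}$, the Levi-Civita connection's metric compatibility and torsion-freeness, and hypothesis \eqref{eq:compatible_hypothesis_commutator} to isolate $Y\log|dr|$ (equivalently $Y(|dr|^{-2})$, which is what the paper differentiates) as a $C^1$ quantity; the easy direction is handled by feeding $h=\log|dr|$ into Lemma \ref{lem:signed_distance_function}, exactly as you describe. The only blemish is the last equality in your display, which has a stray factor of $-|dr|^{-2}\cdot|dr|^2$; the correct line is $\tfrac{1}{2}|dr|^{2}\,Y|X_r|^2 = \tfrac{1}{2}|dr|^{2}\,Y(|dr|^{-2})$, but since you immediately pivot to reading \eqref{eq:D_Y_X_r} backwards to get $(D_Y X_r)r = -Y\log|dr|$, the logic is unaffected.
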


\begin{proof}
  If $|dr|\in C^2(U)$, then we may set $h=\log|dr|$ and apply Lemma \ref{lem:signed_distance_function} to show that $X_r$ is compatible with $r$ on $U$.

  Suppose $X_r$ is compatible with $r$ on $U$, and let $D$ denote the Levi-Civita connection on $T(M)$.  Let $Y$ be a $C^2$ section of $T(U)$.  Since \eqref{eq:metric_compatibility} implies that two linear operators on $T(U)$ are identical, these operator must have the same norm, and hence
  \begin{equation}
  \label{eq:dual_norm_identity_real}
    |X_r|=|dr|^{-1}\text{ on }U.
  \end{equation}
  Since $D$ is metric-compatible, we compute
  \[
    Y(|dr|^{-2})=Y|X_r|^2=2\left<D_Y X_r,X_r\right>,
  \]
  but then \eqref{eq:metric_compatibility} implies
  \[
    Y(|dr|^{-2})=2|dr|^{-2}dr(D_Y X_r).
  \]
  Since $D$ is also torsion-free, we have
  \[
    Y(|dr|^{-2})=2|dr|^{-2}dr(D_{X_r} Y)-2|dr|^{-2}dr([X_r,Y]).
  \]
  The first term on the right-hand side is $C^1$ because $Y$ is a $C^2$ section, and the second term is $C^1$ by \eqref{eq:compatible_hypothesis_commutator}.  Hence, $Y(|dr|^{-2})\in C^1(U)$.  Since $Y$ was arbitrary, $|dr|^{-2}\in C^2(U)$, which implies $|dr|\in C^2(U)$.
\end{proof}

Lemma \ref{lem:signed_distance_function_equivalence} motivates our definition of admissible defining function:
\begin{defn}
\label{defn:metric_compatibility}
  Let $M$ be a $C^3$ manifold of real dimension $n\geq 2$ equipped with a Riemannian metric and let $\Omega\subset M$ be a relatively compact domain with $C^2$ boundary.  Given a neighborhood $U$ of $\partial\Omega$ and a defining function $r\in C^2(U)$ for $\Omega$, we say that $r$ is \textit{admissible} if $|dr|\in C^2(U)$.
\end{defn}
We observe that \eqref{eq:dual_normal} is equivalent to \eqref{eq:metric_compatibility}, so Lemma \ref{lem:signed_distance_function} gives us a large class of admissible defining functions $r$ and guarantees that the unique $C^1$ section $X_r$ of $T(U)$ satisfying \eqref{eq:metric_compatibility} is always compatible with $r$ in the sense of Definition \ref{defn:compatibility}.  We further observe that \eqref{eq:metric_compatibility} with $Y=X_r$ and \eqref{eq:dual_norm_identity_real} imply that
\begin{equation}
\label{eq:Xr_equals_one}
  dr(X_r)\equiv 1\text{ on }U.
\end{equation}

\begin{rem}
\label{rem:real_vs_complex}
  Suppose $M$ is a Hermitian manifold and $\Omega\subset M$ is a relatively compact domain with $C^2$ boundary with an admissible $C^2$ defining function $r$ on some neighborhood $U$ of $\partial\Omega$.  It is not difficult to check that the unique $C^1$ section $L_r$ of $T^{1,0}(U)$ satisfying \eqref{eq:metric_compatibility_complex} and the unique $C^1$ section $X_r$ of $T(U)$ satisfying \eqref{eq:metric_compatibility} are related by $L_r=X_r-iJX_r$ and $X_r=\frac{1}{2}(L_r+\bar L_r)$.  Furthermore, the same reasoning used to prove \eqref{eq:dual_norm_identity_real} and \eqref{eq:Xr_equals_one} implies
  \begin{equation}
  \label{eq:dual_norm_identity_complex}
    |L_r|=|\partial r|^{-1}\text{ on }U
  \end{equation}
  and
  \begin{equation}
  \label{eq:Lr_equals_one}
    \partial r(L_r)\equiv 1\text{ on }U.
  \end{equation}
\end{rem}

If $X_r\in T(U)$ satisfies \eqref{eq:metric_compatibility} for some admissible defining function $r$, then $\nu=|X_r|^{-1}X_r|_{\partial\Omega}$ is the unit outward normal.  Let $\nabla$ denote a connection that is compatible with the Riemannian metric on $M$.  Since $\left<X_r,Y\right>=0$ on $\partial\Omega$ whenever $Y\in T(\partial\Omega)$, the second fundamental form can be written:
\begin{equation}
\label{eq:sff_X_r}
  \sff_\nabla(Y_1,Y_2)=-|X_r|^{-2}\left<\nabla_{Y_1}X_r,Y_2\right>X_r
\end{equation}
We may use metric compatibility of $\nabla$ to obtain
\[
  \sff_\nabla(Y_1,Y_2)=|X_r|^{-2}\left<X_r,\nabla_{Y_1}Y_2\right>X_r.
\]
Since $Y_2 r\equiv 0$ on $\partial\Omega$ when $Y_2\in T(\partial\Omega)$, we may use \eqref{eq:metric_compatibility} and \eqref{eq:dual_norm_identity_real} to show
\[
  \sff_\nabla(Y_1,Y_2)=dr(\nabla_{Y_1}Y_2)X_r=-(\Hess_\nabla(Y_1,Y_2)r)X_r.
\]
In particular, we have
\begin{equation}
\label{eq:sff_identity}
  \Hess_\nabla(Y_1,Y_2)r=-|X_r|^{-2}\left<\sff_\nabla(Y_1,Y_2),X_r\right>\text{ on }\partial\Omega\text{ for all }Y_1,Y_2\in T(\partial\Omega).
\end{equation}

As one would expect, the fact that $X_r r$ is constant allows us to simplify Hessians that include $X_r$ as one of the derivatives.  We summarize these results in the following lemma:
\begin{lem}
\label{lem:symmetric_form}
  Let $M$ be a $C^3$ manifold of real dimension $n\geq 2$ equipped with a Riemannian metric and a metric-compatible connection $\nabla$.  Let $\Omega\subset M$ be a relatively compact domain with $C^2$ boundary.  Let $U$ be a neighborhood of $\partial\Omega$, let $r\in C^2(U)$ be an admissible defining function for $\Omega$, and let $X_r$ be the unique $C^1$ section of $T(U)$ satisfying \eqref{eq:metric_compatibility}.  We have
  \begin{equation}
  \label{eq:Hessian_Y_X}
    \Hess_\nabla(Y,X_r)r=|dr|^{-1}Y|dr|\text{ for all }Y\in T(U).
  \end{equation}
  For $P\in\partial\Omega$, if $\{S_j\}_{j=1}^n\subset T_P(U)$ is an orthonormal basis satisfying $S_n=\frac{X_r}{|X_r|}$, then
  \begin{equation}
  \label{eq:H_3_with_one_X_pointwise}
    H^3_\nabla(Y_2,Y_1,X_r)r|_P=|dr|^{-1}\Hess_\nabla(Y_2,Y_1)|dr|
    -\sum_{j=1}^{n-1}\left<\sff_\nabla(Y_1,S_j),\sff_\nabla(Y_2,S_j)\right>
  \end{equation}
  for all $Y_1,Y_2\in T_P(\partial\Omega)$.
\end{lem}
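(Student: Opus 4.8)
The plan is to prove the two identities in turn, since \eqref{eq:Hessian_Y_X} is the main computational input for \eqref{eq:H_3_with_one_X_pointwise}. For \eqref{eq:Hessian_Y_X} I would expand $\Hess_\nabla(Y,X_r)r = Y(X_r r) - (\nabla_Y X_r)r$; since $X_r r\equiv 1$ on $U$ by \eqref{eq:Xr_equals_one}, the first term vanishes, so $\Hess_\nabla(Y,X_r)r = -dr(\nabla_Y X_r)$. Applying \eqref{eq:metric_compatibility} with $Y$ replaced by $\nabla_Y X_r$, then metric compatibility of $\nabla$, then \eqref{eq:dual_norm_identity_real}, gives
\[
  dr(\nabla_Y X_r) = |dr|^2\left<\nabla_Y X_r,X_r\right> = \tfrac12|dr|^2\,Y|X_r|^2 = \tfrac12|dr|^2\,Y(|dr|^{-2}) = -|dr|^{-1}Y|dr|,
\]
which is \eqref{eq:Hessian_Y_X}. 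This is essentially the computation already performed in the proof of Lemma~\ref{lem:signed_distance_function_equivalence}, now recorded for an arbitrary metric-compatible connection; note that only metric compatibility (not absence of torsion) is used, and admissibility of $r$ is exactly what makes the right-hand side $C^1$.

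For \eqref{eq:H_3_with_one_X_pointwise} I would work with smooth local extensions of the vectors $Y_1,Y_2\in T_P(\partial\Omega)$; since $H^3_\nabla(\,\cdot\,,\,\cdot\,,X_r)r$ is $C^\infty$-linear in each of its vector-field arguments (by the reasoning of Section~\ref{sec:Third_derivatives}), the value at $P$ is independent of the choice of extensions, and moreover $H^3_\nabla(Y_2,Y_1,X_r)r$ exists and is continuous near $\partial\Omega$ because \eqref{eq:Hessian_Y_X} exhibits $\Hess_\nabla(Y_1,X_r)r$ as an element of $C^1$. Expanding the definition gives
\[
  H^3_\nabla(Y_2,Y_1,X_r)r = Y_2\big(\Hess_\nabla(Y_1,X_r)r\big) - \Hess_\nabla(\nabla_{Y_2}Y_1,X_r)r - \Hess_\nabla(Y_1,\nabla_{Y_2}X_r)r,
\]
and \eqref{eq:Hessian_Y_X} together with the Leibniz rule combines the first two terms into $|dr|^{-1}\Hess_\nabla(Y_2,Y_1)|dr| - |dr|^{-2}(Y_2|dr|)(Y_1|dr|)$.

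To treat the remaining term $\Hess_\nabla(Y_1,\nabla_{Y_2}X_r)r|_P$, I would decompose the continuous vector $\nabla_{Y_2}X_r|_P$ along the orthonormal basis $\set{S_j}$, recalling that $S_n=X_r/|X_r|=|dr|X_r$ by \eqref{eq:dual_norm_identity_real}. Metric compatibility of $\nabla$ together with \eqref{eq:dual_norm_identity_real} shows that the component of $\nabla_{Y_2}X_r$ along $S_n$ equals $-|dr|^{-2}(Y_2|dr|)$, while for $j<n$ the component along $S_j$ is $\left<\nabla_{Y_2}X_r,S_j\right>$, which \eqref{eq:sff_X_r} relates to $\sff_\nabla(Y_2,S_j)$. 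Because $\Hess_\nabla(Y_1,\,\cdot\,)r$ is $C^\infty$-linear in its second argument, $\Hess_\nabla(Y_1,\nabla_{Y_2}X_r)r|_P$ splits as the $S_n$-part, equal to $-|dr|^{-2}(Y_2|dr|)(Y_1|dr|)$ by \eqref{eq:Hessian_Y_X} and $S_n=|dr|X_r$, plus $\sum_{j<n}\left<\nabla_{Y_2}X_r,S_j\right>\Hess_\nabla(Y_1,S_j)r$; evaluating $\Hess_\nabla(Y_1,S_j)r$ with \eqref{eq:sff_identity} (valid since $Y_1,S_j\in T_P(\partial\Omega)$) and comparing with $\left<\sff_\nabla(Y_1,S_j),\sff_\nabla(Y_2,S_j)\right>$ computed from \eqref{eq:sff_X_r}, this sum equals $\sum_{j=1}^{n-1}\left<\sff_\nabla(Y_1,S_j),\sff_\nabla(Y_2,S_j)\right>$. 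Putting the three pieces together, the two copies of $|dr|^{-2}(Y_2|dr|)(Y_1|dr|)$ cancel, yielding exactly \eqref{eq:H_3_with_one_X_pointwise}.

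I expect the main obstacle to be the bookkeeping of the $S_n$-component of $\nabla_{Y_2}X_r$: it contributes both through the Leibniz expansion of the first two Hessian terms and through the decomposition of $\nabla_{Y_2}X_r$ itself, and the signs must be tracked carefully so that these two cross terms cancel rather than reinforce. A secondary technical point, already flagged, is that $r$ is only $C^2$ and $X_r$ only $C^1$, so every third derivative in sight must be justified through the admissibility hypothesis $|dr|\in C^2$ and the $C^\infty$-linearity of $H^3_\nabla$ and $\Hess_\nabla$ in their vector-field slots.
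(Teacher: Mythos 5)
Your proof is correct, and both identities are established by essentially the same route the paper takes. For \eqref{eq:Hessian_Y_X} the computation is identical. For \eqref{eq:H_3_with_one_X_pointwise} you make the same three-term expansion of $H^3_\nabla$, combine the first two terms by the Leibniz rule exactly as the paper's equation \eqref{eq:H_3_Y_Y_X} does, and differ only in how the remaining term $\Hess_\nabla(Y_1,\nabla_{Y_2}X_r)r$ is processed: the paper expands it by metric compatibility into $-2|dr|^{-2}(Y_1|dr|)(Y_2|dr|)+|dr|^{2}\langle\nabla_{Y_2}X_r,\nabla_{Y_1}X_r\rangle$ \emph{before} decomposing along $\{S_j\}$, whereas you decompose $\nabla_{Y_2}X_r$ first and exploit the tensoriality of $\Hess_\nabla$ in its second slot to evaluate the normal and tangential pieces separately. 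The two bookkeepings differ only in the intermediate coefficient on $|dr|^{-2}(Y_1|dr|)(Y_2|dr|)$ ($-1+2-1$ in the paper versus $-1+1$ in yours), so your version is marginally cleaner, and your appeal to tensoriality of $\Hess_\nabla$ in the second argument (established in Section~\ref{sec:Third_derivatives}) is exactly the right justification for evaluating $\Hess_\nabla(Y_1,S_n)r|_P$ by choosing the extension $S_n=|dr|X_r$. Your use of \eqref{eq:sff_identity} for $j<n$ is also correct because $\{S_j\}_{j<n}$ is automatically contained in $T_P(\partial\Omega)$, being orthogonal to the outward normal.
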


\begin{proof}
  Since $X_r r$ is constant, we may use \eqref{eq:metric_compatibility} to compute
  \[
    \Hess_\nabla(Y,X_r)r=-(\nabla_Y X_r)r=-|dr|^2\left<\nabla_Y X_r,X_r\right>.
  \]
  However, \eqref{eq:dual_norm_identity_real} and metric compatibility of $\nabla$ imply
  \begin{equation}
  \label{eq:derivative_of_gradient_norm}
    \left<\nabla_Y X_r,X_r\right>=\frac{1}{2}Y|X_r|^2=\frac{1}{2}Y|dr|^{-2}=-|dr|^{-3}Y|dr|,
  \end{equation}
  so \eqref{eq:Hessian_Y_X} follows.

  For $C^1$ sections $Y_1$ and $Y_2$ of $T(\partial\Omega)$, we use \eqref{eq:Hessian_Y_X} to evaluate $Y_2(\Hess_\nabla(Y_1,X_r)r)$ and $\Hess_\nabla(\nabla_{Y_2}Y_1,X_r)r$ to obtain
  \begin{multline}
  \label{eq:H_3_Y_Y_X}
    H^3_\nabla(Y_2,Y_1,X_r)r=\\
    -|dr|^{-2}(Y_2|dr|)(Y_1|dr|)+|dr|^{-1}\Hess_\nabla(Y_2,Y_1)|dr|-\Hess_\nabla(Y_1,\nabla_{Y_2}X_r)r.
  \end{multline}
  Now \eqref{eq:metric_compatibility} gives us
  \[
    \Hess_\nabla(Y_1,\nabla_{Y_2}X_r)r=Y_1(|dr|^2\left<\nabla_{Y_2}X_r,X_r\right>)-|dr|^2\left<\nabla_{Y_1}\nabla_{Y_2}X_r,X_r\right>,
  \]
  so metric compatibility of $\nabla$ implies
  \[
    \Hess_\nabla(Y_1,\nabla_{Y_2}X_r)r=2|dr|(Y_1|dr|)\left<\nabla_{Y_2}X_r,X_r\right>+|dr|^2\left<\nabla_{Y_2}X_r,\nabla_{Y_1}X_r\right>,
  \]
  and hence \eqref{eq:derivative_of_gradient_norm} gives us
  \[
    \Hess_\nabla(Y_1,\nabla_{Y_2}X_r)r=-2|dr|^{-2}(Y_1|dr|)(Y_2|dr|)+|dr|^2\left<\nabla_{Y_2}X_r,\nabla_{Y_1}X_r\right>.
  \]
  Substituting this in \eqref{eq:H_3_Y_Y_X}, we obtain
  \begin{multline}
  \label{eq:H_3_Y_Y_X_improved}
    H^3_\nabla(Y_2,Y_1,X_r)r=\\
    |dr|^{-2}(Y_2|dr|)(Y_1|dr|)+|dr|^{-1}\Hess_\nabla(Y_2,Y_1)|dr|-|dr|^2\left<\nabla_{Y_2}X_r,\nabla_{Y_1}X_r\right>.
  \end{multline}

  For $P\in\partial\Omega$, we let $\{S_j\}_{j=1}^n\subset T_P(U)$ be an orthonormal basis such that $S_n=\frac{X_r}{|X_r|}$.  Then \eqref{eq:dual_norm_identity_real} and \eqref{eq:derivative_of_gradient_norm} imply
  \[
    |dr|^2\left<\nabla_{Y_2}X_r,\nabla_{Y_1}X_r\right>=\sum_{j=1}^{n-1}|dr|^2\left<\nabla_{Y_2}X_r,S_j\right>\left<S_j,\nabla_{Y_1}X_r\right>+|dr|^{-2}(Y_2|dr|)(Y_1|dr|)
  \]
  For $1\leq j\leq n-1$, \eqref{eq:sff_X_r} gives us
  \[
    \left<\nabla_{Y}X_r,S_j\right>=-\left<\sff_\nabla(Y,S_j),X_r\right>\text{ for all }Y\in T_P(\partial\Omega).
  \]
  Since $\sff_\nabla(Y,S_j)=|X_r|^{-2}\left<\sff_\nabla(Y,S_j),X_r\right>X_r$, we have
  \[
    |dr|^2\sum_{j=1}^{n-1}\left<\nabla_{Y_2}X_r,S_j\right>\left<S_j,\nabla_{Y_1}X_r\right>=\sum_{j=1}^{n-1}\left<\sff_\nabla(Y_1,S_j),\sff_\nabla(Y_2,S_j)\right>,
  \]
  and \eqref{eq:H_3_with_one_X_pointwise} follows.
\end{proof}

\section{The One-Form \texorpdfstring{$\alpha$}{alpha}}

\label{sec:alpha}

\begin{defn}
\label{defn:alpha}
  Let $M$ be a Hermitian manifold of complex dimension $n\geq 2$ and let $\Omega\subset M$ be a relatively compact pseudoconvex domain with $C^2$ boundary.  Let $U$ be a neighborhood of $\partial\Omega$ and let $r\in C^2(U)$ be an admissible defining function for $\Omega$.  Let $L_r$ be the unique $C^1$ section of $T^{1,0}(U)$ satisfying \eqref{eq:metric_compatibility_complex}.  We define a real-valued $1$-form $\alpha_r\in\Lambda^1(U)$ by \eqref{eq:alpha_r_defn}.
\end{defn}

This form arises naturally when evaluating the complex Hessian near points of Levi-degeneracy.  For $P\in\partial\Omega$, if $Z\in\mathcal{N}_P(\partial\Omega)$ and $W\in T_P^{1,0}(U)$, \eqref{eq:Lr_equals_one} implies $W-\partial r(W)L_r\in T_P^{1,0}(\partial\Omega)$, so $\ddbar r(Z,\bar W-\dbar r(\bar W)\bar L_r)=0$, and hence we must have
\begin{equation}
\label{eq:null_space_identity}
  \ddbar r(Z,\bar W)=\alpha_r(Z)\dbar r(\bar W)\text{ at }P\in\partial\Omega\text{ for all }Z\in\mathcal{N}_P(\partial\Omega)\text{ and }W\in T_P^{1,0}(U).
\end{equation}

We can decompose $\alpha_r$ into two components: one depending on the choice of admissible defining function $r$ and one depending on the Hermitian geometry of $\partial\Omega$.  If we write $L_r=X_r-iJX_r$ (see Remark \ref{rem:real_vs_complex}), then for any $Z\in T^{1,0}(U)$ \eqref{eq:complex_hessians_equal} gives us
\[
  \alpha_r(Z)=\Hess_\nabla(Z,X_r)r+i\Hess_\nabla(Z,JX_r)r.
\]
By \eqref{eq:Hessian_Y_X} and \eqref{eq:sff_identity}, we have
\begin{equation}
\label{eq:alpha_r_geometric}
  \alpha_r(Z)=Z\log|dr|-i|X_r|^{-2}\left<\sff_\nabla(Z,JX_r),X_r\right>\text{ for all }Z\in T^{1,0}(U).
\end{equation}
Since $|X_r|^{-1}X_r|_{\partial\Omega}$ and $|X_r|^{-1}JX_r|_{\partial\Omega}$ are independent of $r$, we can see that
\[
  (\alpha_r-d\log|dr|)|_{T^{1,0}(\partial\Omega)\oplus T^{0,1}(\partial\Omega)}\text{ is independent of }r.
\]

As shown in \cite{BoSt93} (see also Lemma 5.14 in \cite{Str10}), if we restrict $\alpha_r$ to a complex submanifold in the boundary of a pseudoconvex domain with $C^3$ boundary, $\alpha_r$ is $d$-closed.  Our primary goal for this section will be to show that this result is still true for pseudoconvex domains with $C^2$ boundaries.
\begin{prop}
\label{prop:alpha_closed}
  Let $M$ be a Hermitian manifold of complex dimension $n\geq 2$ and let $\Omega\subset M$ be a relatively compact pseudoconvex domain with $C^2$ boundary.  Given an admissible defining function $r\in C^2(U)$ for $\Omega$ defined on some neighborhood $U$ of $\partial\Omega$, define $\alpha_r\in\Lambda^1(U)$ by \eqref{eq:alpha_r_defn}.  Let $S\subset\partial\Omega$ be a complex submanifold of complex dimension $m\geq 1$.  Then the pullback $\iota^*_S\alpha_r$ of $\alpha_r$ by the inclusion map $\iota_S:S\rightarrow M$ is $d_S$-closed in the weak sense, where $d_S$ denotes the exterior derivative on $S$.
\end{prop}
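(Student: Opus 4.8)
The plan is to verify that $\iota_S^*\alpha_r$ is weakly closed by testing against an arbitrary compactly supported smooth $(2m-2)$-form on $S$, or equivalently, by checking that $d_S(\iota_S^*\alpha_r)$ vanishes pointwise wherever enough regularity is available and then arguing that the exceptional set is negligible. The natural first reduction: since $\alpha_r$ is real, $d\alpha_r = \partial\alpha_r + \dbar\alpha_r$, and by Proposition \ref{prop:beta} the combination $-\tfrac{i}{2}(\partial\alpha_r - \dbar\alpha_r) = \beta_r$ exists weakly as a continuous form; so proving closedness amounts to showing that the $(1,1)$-part $\partial\alpha_r + \dbar\alpha_r$ pulled back to $S$ vanishes. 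On a complex submanifold $S\subset\partial\Omega$, every holomorphic tangent vector $Z\in T^{1,0}(S)$ lies in $\mathcal{N}_P(\partial\Omega)$ because $S$ is complex and contained in the (pseudoconvex, hence Levi-nonnegative) boundary — this is the standard fact that complex submanifolds of the boundary of a pseudoconvex domain lie in the null space of the Levi form. Thus the key structural input \eqref{eq:null_space_identity} is available for all $Z\in T^{1,0}(S)$: $\ddbar r(Z,\bar W) = \alpha_r(Z)\dbar r(\bar W)$ at points of $\partial\Omega$.

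The heart of the argument is to compute $d_S\iota_S^*\alpha_r(Z,\bar W)$ for $Z,W\in T^{1,0}(S)$ using the third-order Hessian machinery of Section \ref{sec:Third_derivatives}. Writing $L_r = X_r - iJX_r$ with $X_r$ compatible with $r$ (Lemma \ref{lem:signed_distance_function_equivalence}, since $r$ is admissible), one expresses $\alpha_r(Z) = \ddbar r(Z,\bar L_r) = \Hess_\nabla(Z,\bar L_r)r$. Then $Z'\alpha_r(\bar W) - \bar W\alpha_r(Z') - \alpha_r([Z',\bar W])$ for $Z',W\in T^{1,0}(S)$ can be rewritten in terms of $H^3_\nabla(Z',\bar W,\bar L_r)r$, $H^3_\nabla(\bar W, Z', \bar L_r)r$, derivatives of $L_r$, and the curvature/torsion correction terms from Lemma \ref{lem:H_3_symmetries}. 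Crucially, Lemma \ref{lem:third_derivatives_exist} guarantees that $H^3_\nabla(X_r, Y_1, Y_2)r$ exists and is continuous, which is exactly what lets us differentiate the Levi form in the $L_r$-direction even though $r$ is only $C^2$; combined with the symmetry identity \eqref{eq:H_3_symmetry_cycle} (specialized to $X = X_r = \tfrac12(L_r+\bar L_r)$), the mixed third derivatives can be reorganized so that the genuinely problematic term — a pure tangential-tangential-normal third derivative that would require $C^3$ regularity — cancels or is expressed via the well-defined quantities. The remaining terms should collapse, using \eqref{eq:null_space_identity} repeatedly to replace $\ddbar r(Z',\bar\cdot)$ by $\alpha_r(Z')\dbar r(\bar\cdot)$ and using $\dbar r(\bar L_r) = \overline{\partial r(L_r)} = 1$ from \eqref{eq:Lr_equals_one}, leaving $d_S\iota_S^*\alpha_r(Z,\bar W) = 0$.

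I expect the main obstacle to be the bookkeeping around which third derivatives are \emph{a priori} defined. The subtlety is that $H^3_\nabla(Z,\bar W, X_r)r$ with $Z,W$ tangential is \emph{not} directly covered by Lemma \ref{lem:third_derivatives_exist} (which requires the $X_r$ in the \emph{first} slot), so one must use the symmetry relations of Lemma \ref{lem:H_3_symmetries} and the cycle identity \eqref{eq:H_3_symmetry_cycle} to trade it for $H^3_\nabla(X_r, \cdot,\cdot)r$ plus curvature and lower-order terms; verifying that every term that survives is either manifestly continuous or is one of the permitted $H^3_\nabla(X_r,\cdot,\cdot)r$ expressions is where the care lies. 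A secondary technical point is the passage from "$d_S\iota_S^*\alpha_r = 0$ pointwise a.e." to "weakly closed": since $\alpha_r$ is continuous and its tangential exterior derivative is represented by a continuous form (via the $\beta_r$-existence of Proposition \ref{prop:beta} together with the vanishing of the $(1,1)$-part), one integrates by parts against test forms and uses density/approximation to conclude closedness in the distributional sense. I would also double-check that the $(2,0)$ and $(0,2)$ parts of $d_S\iota_S^*\alpha_r$ vanish — but since $S$ is a complex manifold and $\alpha_r$ is real with $\alpha_r(Z) = \overline{\alpha_r(\bar Z)}$, once the $(1,1)$-part is handled the others follow by the same computation with $\bar W$ replaced by $W$, where now $\partial r(W) = 0$ on $S\subset\partial\Omega$ kills the relevant terms outright.
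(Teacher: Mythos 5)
There is a genuine gap at exactly the point you flag as the main obstacle, and it does not resolve in your favor. Pulling back to $S$ and computing as you propose, the $(1,1)$-part of $d\alpha_r(Z,\bar W)$ becomes, modulo curvature and torsion corrections (which are continuous), the difference $H^3_\nabla(L_r,Z,\bar W)r - H^3_\nabla(\bar L_r,\bar W,Z)r$; using \eqref{eq:H_3_second_symmetry_mixed} this collapses to $H^3_\nabla(L_r-\bar L_r,\bar W,Z)r = -2i\,H^3_\nabla(JX_r,\bar W,Z)r$. Compare this with the proof of Proposition \ref{prop:beta}, where $\beta_r = -\tfrac{i}{2}(\partial\alpha_r-\dbar\alpha_r)$ produces the \emph{sum} $L_r+\bar L_r = 2X_r$ in the first slot, so Lemma \ref{lem:third_derivatives_exist} applies. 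For $d\alpha_r = \partial\alpha_r+\dbar\alpha_r$ you get the \emph{difference} $L_r-\bar L_r=-2iJX_r$, and $JX_r$ is tangential, not transversal: $JX_r\,r \equiv 0$ by \eqref{eq:metric_compatibility} and \eqref{eq:dual_norm_identity_real}, so $JX_r$ is certainly not compatible with $r$ and Lemma \ref{lem:third_derivatives_exist} gives no control over $H^3_\nabla(JX_r,\cdot,\cdot)r$. The leading term $(JX_r)(\Hess_\nabla(\bar W,Z)r)$ is a tangential derivative of the Levi form, which genuinely requires $C^3$ data (equivalently, $C^1$ regularity of the second fundamental form), and the symmetry identities \eqref{eq:H_3_first_symmetry_unmixed}--\eqref{eq:H_3_third_symmetry_unmixed} and \eqref{eq:H_3_symmetry_cycle} only move $JX_r$ between slots --- they cannot trade it for $X_r$. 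So the "problematic term cancels or is expressed via the well-defined quantities" hope fails: the term survives with a nonzero coefficient, and this is precisely why closedness of $\alpha_r$ is nontrivial at $C^2$ regularity even though existence of $\beta_r$ is not.

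The paper's actual proof sidesteps covariant third derivatives entirely. It chooses holomorphic coordinates $(z,w)$ in which $S$ and $\partial\Omega$ have a graph form, derives \eqref{eq:alpha_on_S_rho} expressing $\alpha_r|_S$ through first derivatives of the coordinate defining function $\rho$, and then extracts the missing piece of regularity from pseudoconvexity of the \emph{full} hypersurface near $S$ (not just on $S$): a one-sided Taylor expansion of the Levi form in the transverse parameter $t$ combined with nonnegativity gives the inequality \eqref{eq:Laplace_v_lower_bound}, which, after integrating against a nonnegative test function $\chi$ and integrating by parts against $\rho$ (so only two derivatives of $\rho$ are ever used), yields the weak identity \eqref{eq:weak_second_derivative_rho_x}. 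This forces $\im\log\frac{\partial\rho}{\partial w_{n-m}}$ to be pluriharmonic on $S$ in the distribution sense, hence smooth by elliptic regularity, and a pluriharmonic conjugate then exhibits $\iota_S^*\alpha_r$ as locally $d_S$-exact. If you want to rescue your approach you would need a substitute for the direct differentiation of the Levi form in the $JX_r$ direction, and the only substitute available at $C^2$ regularity is the distributional one using pseudoconvexity.
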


\begin{proof}
  Let $L_r$ be the unique $C^1$ section of $T^{1,0}(U)$ satisfying \eqref{eq:metric_compatibility_complex}, and set $X_r=\frac{1}{2}(L_r+\bar L_r)$ so that $X_r$ is the unique $C^1$ section of $T(U)$ satisfying \eqref{eq:metric_compatibility}.  Given any point $P$ in the interior of $S$, let $\tilde U\subset U$ be a neighborhood of $P$ on which there exist holomorphic coordinates $\{z_j\}_{j=1}^m\cup\{w_j\}_{j=1}^{n-m}$ such that $P=0$ in these coordinates,
  \[
    S\cap\tilde U=\{(z,w)\in\tilde U:w=0\},
  \]
  and
  \[
    \Omega\cap\tilde U=\{(z,w)\in\tilde U:\im w_{n-m}>f(z,w',\re w_{n-m})\}
  \]
  for some $f\in C^2(\mathbb{C}^{n-1}\times\mathbb{R})$ satisfying $f(0)=0$, where $w'=(w_1,\ldots,w_{n-m-1})$.  Note that $S\subset\partial\Omega$ implies that $f(z,0',0)=0$ whenever $z\in\mathbb{C}^m$ and $(z,0)\in\tilde U$.

  If we set $w_{n-m}=x_{n-m}+iy_{n-m}$, then $\rho(z,w)=f(z,w',x_{n-m})-y_{n-m}$ is also a defining function for $\Omega$ on $\tilde U$, and $\frac{\partial\rho}{\partial y_{n-m}}\equiv-1$.  On $\tilde U$, $r(z,w)=e^{h(z,w)}\rho(z,w)$ for some real-valued function $h\in C^1(\tilde U)\cap C^2(\tilde U\backslash\partial\Omega)$.  On $\partial\Omega\cap\tilde U$, we have $\partial r=e^h\partial\rho$.

  In our special coordinates, $T^{1,0}(S\cap\tilde U)=\Span\set{\frac{\partial}{\partial z_j}}_{j=1}^m$.  If $Z\in T^{1,0}(S\cap\tilde U)$ and $W\in T^{1,0}(\partial\Omega\cap\tilde U)$, then on $S\cap\tilde U$, pseudoconvexity implies that
  \[
    0\leq\ddbar r(Z+sW,\bar Z+\bar s\bar W)=2\re\left(s\ddbar r(W,\bar Z)\right)+|s|^2\ddbar r(W,\bar W)
  \]
  for all $s\in\mathbb{C}$.  Since the right-hand side of this expression achieves a minimum at $s=0$, it must have a critical point at $s=0$, and hence
  \[
    \ddbar r(W,\bar Z)=0\text{ on }S\cap\tilde U\text{ for all }Z\in T^{1,0}(S\cap\tilde U)\text{ and }W\in T^{1,0}(\partial\Omega\cap\tilde U).
  \]
  If we set $W=L_r-\left(\frac{\partial r}{\partial w_{n-m}}\right)^{-1}\frac{\partial}{\partial w_{n-m}}$, then \eqref{eq:Lr_equals_one} implies that $W\in T^{1,0}(\partial\Omega)$.  For any $Z\in T^{1,0}(S\cap\tilde U)$, we have
  \[
    0=\ddbar r\left(Z,\bar L_r-\left(\frac{\partial r}{\partial\bar w_{n-m}}\right)^{-1}\frac{\partial}{\partial\bar w_{n-m}}\right)=
    \alpha_r(Z)-\left(\frac{\partial r}{\partial\bar w_{n-m}}\right)^{-1}Z\frac{\partial r}{\partial\bar w_{n-m}}
  \]
  on $S\cap\tilde U$.  Hence
  \begin{equation}
  \label{eq:alpha_on_S}
    \alpha_r(Z)=\left(\frac{\partial r}{\partial\bar w_{n-m}}\right)^{-1}Z\frac{\partial r}{\partial\bar w_{n-m}}\text{ on }S\cap\tilde U\text{ for all }Z\in T^{1,0}(S\cap\tilde U).
  \end{equation}
  Since $\frac{\partial\rho}{\partial y_{n-m}}$ is constant, $Z\frac{\partial\rho}{\partial\bar w_{n-m}}=\frac{1}{2}Z\frac{\partial\rho}{\partial x_{n-m}}$ on $\tilde U$.  Since $\partial r=e^h\partial\rho$ on $\partial\Omega$, we can rewrite \eqref{eq:alpha_on_S} in the form
  \begin{equation}
  \label{eq:alpha_on_S_rho}
    \alpha_r(Z)=Zh+\frac{1}{2}\left(\frac{\partial\rho}{\partial\bar w_{n-m}}\right)^{-1}Z\frac{\partial\rho}{\partial x_{n-m}}\text{ on }S\cap\tilde U\text{ for all }Z\in T^{1,0}(S\cap\tilde U).
  \end{equation}

  Let $Z\in T^{1,0}(\tilde U)$ be a linear combination of $\set{\frac{\partial}{\partial z_j}}_{j=1}^m$ with constant coefficients.  Set $W=Z-(Z\rho)\left(\frac{\partial\rho}{\partial w_{n-m}}\right)^{-1}\frac{\partial}{\partial w_{n-m}}$, so that $W\in T^{1,0}(\partial\Omega\cap\tilde U)$.  On $\partial\Omega\cap\tilde U$, we compute the Levi-form
  \begin{multline}
  \label{eq:Levi_form_submanifold}
    \ddbar\rho(W,\bar W)=\ddbar\rho(Z,\bar Z)-\re\left((\bar Z\rho)\left(\frac{\partial\rho}{\partial\bar w_{n-m}}\right)^{-1}Z\frac{\partial\rho}{\partial x_{n-m}}\right)\\
    +\frac{1}{4}\abs{Z\rho}^2\abs{\frac{\partial\rho}{\partial w_{n-m}}}^{-2}\frac{\partial^2\rho}{\partial x_{n-m}^2}.
  \end{multline}
  Fix $z\in\mathbb{C}^m$ such that $(z,0)\in S\cap\tilde U$.  Choose $\tilde t>0$ such that
  \[
    (z,0',t+if(z,0',t))\in\partial\Omega\cap\tilde U
  \]
  for all $|t|\leq\tilde t$.  Since $Z\frac{\partial\rho}{\partial y_{n-m}}\equiv 0$, we have the linear approximation
  \[
    \abs{(\bar Z\rho)(z,0',t+if(z,0',t))-t\left(\bar Z\frac{\partial\rho}{\partial x_{n-m}}\right)(z,0)}\leq o(t)
  \]
  whenever $|t|\leq\tilde t$.  We may use continuity of second derivatives of $\rho$ and \eqref{eq:Levi_form_submanifold} to show that
  \begin{multline}
  \label{eq:Levi_form_submanifold_upper_bound}
    \left(\ddbar\rho(W,\bar W)\right)(z,0',t+if(z,0',t))\leq\left(\ddbar\rho(Z,\bar Z)\right)(z,0',t+if(z,0',t))\\
    -t\re\left(\left(\frac{\partial\rho}{\partial\bar w_{n-m}}\right)^{-1}\abs{Z\frac{\partial\rho}{\partial x_{n-m}}}^2\right)(z,0)+o(t).
  \end{multline}
  Note that $\re\left(\left(\frac{\partial\rho}{\partial\bar w_{n-m}}\right)^{-1}\right)=\frac{1}{2}\abs{\frac{\partial\rho}{\partial\bar w_{n-m}}}^{-2}\frac{\partial\rho}{\partial x_{n-m}}$ and
  \[
    \frac{\partial^2\rho}{\partial z_j\partial\bar z_k}(z,0',t+if(z,0',t))=\frac{\partial^2 f}{\partial z_j\partial\bar z_k}(z,0',t)=\frac{\partial^2\rho}{\partial z_j\partial\bar z_k}(z,0',t).
  \]
  Since $\Omega$ is pseudoconvex, the left-hand side of \eqref{eq:Levi_form_submanifold_upper_bound} is non-negative.  Combining these observations with \eqref{eq:alpha_on_S_rho}, we obtain
  \begin{equation}
  \label{eq:Laplace_v_lower_bound}
    \left(\ddbar\rho(Z,\bar Z)\right)(z,0',t)\geq
    2t\left(\frac{\partial\rho}{\partial x_{n-m}}\abs{\alpha_r(Z)-\partial h(Z)}^2\right)(z,0)-o(t).
  \end{equation}
  whenever $|t|\leq\tilde t$.

  Let $\chi\in C^\infty_0(S\cap\tilde U)$ be non-negative valued.  Fix $\tilde t>0$ such that $(z,0',t)\in\tilde U$ for all $z\in\supp\chi$ and $t\in\mathbb{R}$ satisfying $|t|\leq\tilde t$.  For $1\leq j,k\leq m$ and $t\in\mathbb{R}$ such that $|t|\leq\tilde t$, we define a $(1,1)$-form $\gamma_t$ on $S\cap\tilde U$ by
  \[
    \gamma_t\left(\frac{\partial}{\partial z_j},\frac{\partial}{\partial\bar z_k}\right)=\int_{S\cap\tilde U}\chi(z)\frac{\partial^2\rho}{\partial z_j\partial\bar z_k}(z,0',t) i^m dz\wedge d\bar z.
  \]
  For $1\leq j,k\leq m$, we use integration by parts to compute
  \[
    \gamma_t\left(\frac{\partial}{\partial z_j},\frac{\partial}{\partial\bar z_k}\right)=\int_{S\cap\tilde U}\frac{\partial^2\chi}{\partial z_j\partial\bar z_k}(z)\rho(z,0',t) i^m dz\wedge d\bar z.
  \]
  Hence, $\gamma_t$ is $C^2$ with respect to $t$.  Since $f(z,0)\equiv 0$ on $S\cap\tilde U$, $\gamma_0\equiv 0$.  Since \eqref{eq:Laplace_v_lower_bound} implies that
  \[
    \gamma_t(Z,\bar Z)\geq
    2t\int_{S\cap\tilde U}\chi(z)\left(\frac{\partial\rho}{\partial x_{n-m}}\abs{\alpha_r(Z)-\partial h(Z)}^2\right)(z,0) i^m dz\wedge d\bar z-o(t),
  \]
  we must have
  \[
    \frac{\partial}{\partial t}\gamma_t\left(Z,\bar Z\right)|_{t=0}=
    2\int_{S\cap\tilde U}\chi(z)\left(\frac{\partial\rho}{\partial x_{n-m}}\abs{\alpha_r(Z)-\partial h(Z)}^2\right)(z,0) i^m dz\wedge d\bar z.
  \]
  Since this holds for all non-negative valued $\chi\in C^\infty_0(S\cap\tilde U)$,
  \begin{equation}
  \label{eq:weak_second_derivative_rho_x}
    Z\bar Z\frac{\partial\rho}{\partial x_{n-m}}=
    2\frac{\partial\rho}{\partial x_{n-m}}\abs{\alpha_r(Z)-\partial h(Z)}^2
  \end{equation}
  on $S\cap\tilde U$ in the sense of distributions.  We compute
  \begin{multline*}
    Z\bar Z\im\log\frac{\partial\rho}{\partial w_{n-m}}=\frac{1}{4i}\left(\left(\frac{\partial\rho}{\partial w_{n-m}}\right)^{-1}-\left(\frac{\partial\rho}{\partial\bar w_{n-m}}\right)^{-1}\right)Z\bar Z\frac{\partial\rho}{\partial x_{n-m}}\\
    -\frac{1}{8i}\left(\left(\frac{\partial\rho}{\partial w_{n-m}}\right)^{-2}-\left(\frac{\partial\rho}{\partial\bar w_{n-m}}\right)^{-2}\right)\abs{Z\frac{\partial\rho}{\partial x_{n-m}}}^2
  \end{multline*}
  If we substitute \eqref{eq:alpha_on_S_rho} and \eqref{eq:weak_second_derivative_rho_x}, we obtain
  \[
    Z\bar Z\im\log\frac{\partial\rho}{\partial w_{n-m}}=0
  \]
  on $S\cap\tilde U$ in the sense of distributions.  However, since this holds for all constant linear combinations of $\set{\frac{\partial}{\partial z_j}}_{j=1}^m$, we conclude that $\im\log\frac{\partial\rho}{\partial w_{n-m}}$ is a pluriharmonic function on $S\cap\tilde U$, and standard elliptic theory implies that $\im\log\frac{\partial\rho}{\partial w_{n-m}}\in C^\infty(S\cap\tilde U)$.  We may assume that $\tilde U$ is sufficiently small so that there exists a pluriharmonic conjugate $f\in C^\infty(S\cap\tilde U)$, i.e., $f+i\im\log\frac{\partial\rho}{\partial w_{n-m}}$ is holomorphic on $S\cap\tilde U$.  Returning to \eqref{eq:alpha_on_S_rho}, we have
  \[
    \alpha_r(Z)=Z\left(h+\re\log\frac{\partial\rho}{\partial\bar w_{n-m}}-f\right)
  \]
  on $S\cap\tilde U$ for all $Z\in T^{1,0}(S\cap\tilde U)$, so
  \[
    \iota_S^*\alpha_r=d_S\left(h+\re\log\frac{\partial\rho}{\partial\bar w_{n-m}}-f\right)
  \]
  on $S\cap\tilde U$.  Since we have shown that $\iota_S^*\alpha_r$ is locally $d_S$-exact, we conclude that $\iota_S^*\alpha_r$ is globally $d_S$-closed in the weak sense.

\end{proof}

\section{The Two-Form \texorpdfstring{$\beta$}{beta}}

\label{sec:beta}

Following the construction in \cite{Har19}, we will introduce a continuous $2$-form $\beta_r$ with the following properties:
\begin{prop}
\label{prop:beta}
  Let $M$ be a Hermitian manifold of complex dimension $n\geq 2$ and let $\Omega\subset M$ be a relatively compact pseudoconvex domain with $C^2$ boundary.  Let $U$ be a neighborhood of $\partial\Omega$ and let $r\in C^2(U)$ be an admissible defining function for $\Omega$.  Let $X_r$ be the unique $C^1$ section of $T(U)$ satisfying \eqref{eq:metric_compatibility}.  There exists a continuous, real-valued $2$-form $\beta_r\in\Lambda^2(U)$ satisfying \eqref{eq:beta_C3_defn} on $U$ in the weak sense.  For $P\in\partial\Omega$ and $Z,W\in\mathcal{N}_P(\partial\Omega)$, we have
  \begin{equation}
  \label{eq:beta_unmixed_nullspace}
    \beta_r(Z,W)=0
  \end{equation}
  and
  \begin{multline}
  \label{eq:beta_mixed_nullspace}
    \beta_r(Z,\bar W)=-i H^3_\nabla(X_r,Z,\bar W)r-i\alpha_r(Z)\alpha_r(\bar W)+i(\Hess_\nabla(X_r,Z)r)\alpha_r(\bar W)\\
    +i\alpha_r(Z)(\Hess_\nabla(X_r,\bar W)r).
  \end{multline}.
\end{prop}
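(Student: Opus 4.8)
The plan is to compute the distributional exterior derivative $d\alpha_r$ by reducing it to the third-order operator $H^3_\nabla$ of Section~\ref{sec:Third_derivatives}, arranging the reductions so that the only genuine third derivatives of $r$ that survive are of the form $H^3_\nabla(X_r,\cdot,\cdot)r$, which exist and are continuous by Lemma~\ref{lem:third_derivatives_exist} (for an admissible $r$ the field $X_r$ of \eqref{eq:metric_compatibility} is compatible with $r$). Throughout I would write $\alpha_r=\alpha_r^{1,0}+\alpha_r^{0,1}$, so that by \eqref{eq:alpha_r_defn} and \eqref{eq:complex_hessians_equal} one has $\alpha_r^{1,0}(Z)=\Hess_\nabla(Z,\bar L_r)r$ and $\alpha_r^{0,1}(\bar W)=\Hess_\nabla(L_r,\bar W)r$ for $Z,W\in T^{1,0}(U)$, and expand all brackets via $[A,B]=\nabla_A B-\nabla_B A-T_\nabla(A,B)$ together with \eqref{eq:torsion_free}.

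Assuming first that $r$ is smooth, I would apply $d\mu(A,B)=A\mu(B)-B\mu(A)-\mu([A,B])$ and the definition of $H^3_\nabla$ to each of $d\alpha_r^{1,0}$, $d\alpha_r^{0,1}$. In bidegree $(2,0)$ this gives $\partial\alpha_r^{1,0}(Z,W)=H^3_\nabla(Z,W,\bar L_r)r-H^3_\nabla(W,Z,\bar L_r)r+\alpha_r(T_\nabla(Z,W))+\Hess_\nabla(W,\nabla_Z\bar L_r)r-\Hess_\nabla(Z,\nabla_W\bar L_r)r$; by \eqref{eq:H_3_first_symmetry_unmixed} the three torsion terms cancel, leaving the manifestly continuous expression $\partial\alpha_r^{1,0}(Z,W)=\Hess_\nabla(W,\nabla_Z\bar L_r)r-\Hess_\nabla(Z,\nabla_W\bar L_r)r$, and the $(0,2)$-component is its conjugate. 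In bidegree $(1,1)$ one gets $(\partial\alpha_r^{0,1}-\bar\partial\alpha_r^{1,0})(Z,\bar W)=H^3_\nabla(Z,L_r,\bar W)r+H^3_\nabla(\bar W,Z,\bar L_r)r+\Hess_\nabla(\nabla_Z L_r,\bar W)r+\Hess_\nabla(Z,\nabla_{\bar W}\bar L_r)r$; using \eqref{eq:H_3_second_symmetry_mixed} and \eqref{eq:H_3_first_symmetry_mixed} on the first $H^3_\nabla$ term rewrites the pair of $H^3_\nabla$ terms as $2H^3_\nabla(\bar W,Z,X_r)r$ modulo a curvature term, and then \eqref{eq:H_3_first_symmetry_mixed} together with \eqref{eq:H_3_symmetry_cycle} converts this into $2H^3_\nabla(X_r,Z,\bar W)r$ modulo continuous terms built from $R_\nabla$, $T_\nabla$, $\Hess_\nabla(\cdot,\cdot)r$ and $\nabla L_r$, hence continuous by Lemma~\ref{lem:third_derivatives_exist}.

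Defining $\beta_r$ pointwise through $\beta_r=-\tfrac i2(\partial\alpha_r-\bar\partial\alpha_r)$ by these formulas then produces a continuous, real-valued $2$-form. To obtain \eqref{eq:beta_C3_defn} in the weak sense I would mollify $r$: choosing smooth $r_\epsilon\to r$ in $C^2_{\mathrm{loc}}(U)$ we get $\partial r_\epsilon\to\partial r$ in $C^1$, hence $L_{r_\epsilon}\to L_r$ in $C^1$ and $\alpha_{r_\epsilon}\to\alpha_r$ in $C^0$, while $\beta_{r_\epsilon}=-\tfrac i2(\partial-\bar\partial)\alpha_{r_\epsilon}$ holds classically; passing to the limit in this identity, tested against a fixed smooth form, yields the weak statement provided $\beta_{r_\epsilon}\to\beta_r$ uniformly. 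The continuous terms above converge uniformly by inspection, so the crux — the step I expect to be the main obstacle — is showing $H^3_\nabla(X_{r_\epsilon},Z,\bar W)r_\epsilon\to H^3_\nabla(X_r,Z,\bar W)r$ uniformly; a naive estimate would require $C^3$ control on $r$, so this is exactly where the regularity machinery of Section~\ref{sec:regularity_properties}, in the form of Lemma~\ref{lem:characteristic_regularity}, has to be invoked.

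For the null-space identities, both sides of \eqref{eq:beta_unmixed_nullspace} and \eqref{eq:beta_mixed_nullspace} are tensorial in $Z,W$, so I may extend $Z,W$ to $C^1$ sections near $P$ with $\partial r(Z)\equiv\partial r(W)\equiv 0$. Differentiating $\partial r(L_r)\equiv 1$ from \eqref{eq:Lr_equals_one} gives $\partial r(\nabla_Z L_r)=-\Hess_\nabla(Z,L_r)r$, and combining this with \eqref{eq:metric_compatibility_complex}, metric compatibility of the Chern connection, and $\Hess_\nabla(\bar Z,L_r)r=\Hess_\nabla(L_r,\bar Z)r=\alpha_r(\bar Z)$ (from \eqref{eq:Hessian_symmetry}, \eqref{eq:torsion_free}, \eqref{eq:alpha_r_defn}) yields $\bar\partial r(\nabla_Z\bar L_r)=-\alpha_r(Z)$ and likewise $\bar\partial r(\nabla_{\bar W}\bar L_r)=-\Hess_\nabla(\bar W,\bar L_r)r$. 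Feeding these into the formulas of the second paragraph and rewriting every $\Hess_\nabla(\cdot,\cdot)r$ whose argument is one of $\nabla_Z\bar L_r$, $\nabla_W\bar L_r$, $\nabla_Z L_r$, $\nabla_{\bar W}\bar L_r$, $T_\nabla(Z,L_r)$, $T_\nabla(\bar W,\bar L_r)$ by means of \eqref{eq:null_space_identity} and its Hermitian conjugate, one finds in bidegree $(2,0)$ that both Hessian terms equal $-\alpha_r(Z)\alpha_r(W)$, so they cancel and \eqref{eq:beta_unmixed_nullspace} follows; in bidegree $(1,1)$ the surviving corrections collapse — using $\Hess_\nabla(L_r,\bar W)r=\alpha_r(\bar W)$, the torsion identity $T_\nabla(Z,L_r)r+\Hess_\nabla(Z,L_r)r=\Hess_\nabla(L_r,Z)r$, and $2\Hess_\nabla(X_r,Z)r=\Hess_\nabla(L_r,Z)r+\alpha_r(Z)$ — to exactly $-i\alpha_r(Z)\alpha_r(\bar W)+i(\Hess_\nabla(X_r,Z)r)\alpha_r(\bar W)+i\alpha_r(Z)(\Hess_\nabla(X_r,\bar W)r)$, which is \eqref{eq:beta_mixed_nullspace}. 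These last reductions are routine algebra once the displayed formulas for $\partial\alpha_r^{1,0}$ and $\partial\alpha_r^{0,1}-\bar\partial\alpha_r^{1,0}$ are in hand.
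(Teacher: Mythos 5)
Your structural plan is right --- decompose $d\alpha_r$ using the $H^3_\nabla$ symmetries from Lemma~\ref{lem:H_3_symmetries}, isolate the third derivative as $H^3_\nabla(X_r,Z,\bar W)r$ where Lemma~\ref{lem:third_derivatives_exist} gives continuity, and then reduce on the null-space via \eqref{eq:null_space_identity} and the relation $\partial r(\nabla_Y L_r)=-\Hess_\nabla(Y,L_r)r$. But the mollification step in your third paragraph is a genuine gap, and you have already named it yourself: you need $H^3_\nabla(X_{r_\epsilon},Z,\bar W)r_\epsilon\to H^3_\nabla(X_r,Z,\bar W)r$ uniformly, and then defer this to ``the regularity machinery of Section~\ref{sec:regularity_properties}, in the form of Lemma~\ref{lem:characteristic_regularity}.'' That lemma is a regularity statement about a single fixed $C^2$ function along the flow of a single compatible $C^1$ field; it is not a convergence theorem for mollified families. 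Concretely, writing $g_\epsilon=\Hess_\nabla(Z,\bar W)r_\epsilon$ and $g=\Hess_\nabla(Z,\bar W)r$, you have $g_\epsilon\to g$ only in $C^0$, and $g$ is only guaranteed to have a continuous derivative in the single direction $X_r$ (not in $C^1$), so $(X_{r_\epsilon}-X_r)g$ need not even make classical sense, and $X_{r_\epsilon}(g_\epsilon-g)$ has no visible uniform bound. Your approach would need an additional argument that moves the $X_{r_\epsilon}$-derivative off of $g_\epsilon$ using $X_{r_\epsilon}r_\epsilon\equiv 1$ and commutator identities \emph{before} passing to the limit; as stated, the convergence is simply asserted.

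The paper avoids mollification altogether. It observes that the $H^3_\nabla$ symmetry identities of Lemma~\ref{lem:H_3_symmetries}, together with the Leibniz rule, hold for weak derivatives because they hold for classical derivatives and every step is a pointwise algebraic cancellation. So one computes $\partial\alpha_r-\dbar\alpha_r$ directly as a distribution, applies \eqref{eq:H_3_first_symmetry_unmixed}, \eqref{eq:H_3_second_symmetry_mixed}, and the conjugate of \eqref{eq:H_3_first_symmetry_unmixed} inside the distributional expression, and arrives at \eqref{eq:beta_unmixed_defn} and \eqref{eq:beta_mixed_defn}: these explicit right-hand sides are continuous functions (the third-order piece $H^3_\nabla(X_r,Z,\bar W)r$ is continuous by Lemma~\ref{lem:third_derivatives_exist}, the rest involves only second derivatives of $r$), and the distributional identity \textit{is} the weak statement \eqref{eq:beta_C3_defn}. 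No approximation argument and no family of mollified $r_\epsilon$ is ever introduced. Your remaining computations --- the $(2,0)$ part vanishing on the null-space, and the null-space reduction of the $(1,1)$ part to \eqref{eq:beta_mixed_nullspace} --- match the paper in substance, but you should also record the $-i H^3_\nabla(X_r,Z,\bar W)r$ term explicitly in your final display, since as written your last formula drops it.
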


\begin{proof}
Let $L_r$ be the unique $C^1$ section of $T^{1,0}(U)$ satisfying \eqref{eq:metric_compatibility_complex}.  Let $Z$ and $W$ be $C^1$ sections of $T^{1,0}(U)$.  Although we must interpret $H^3_\nabla(Z,W,\bar L_r)$ as a distribution, weak derivatives satisfy the same commutation relations as classical derivatives, so \eqref{eq:H_3_first_symmetry_unmixed} with \eqref{eq:complex_hessians_equal} and \eqref{eq:alpha_r_defn} gives us
\[
  H^3_\nabla(Z,W,\bar L_r)r-H^3_\nabla(W,Z,\bar L_r)r=-\alpha_r(T_\nabla(Z,W)).
\]
As distributions, \eqref{eq:complex_hessians_equal} and \eqref{eq:alpha_r_defn} also give us
\[
  H^3_\nabla(Z,W,\bar L_r)r=(\nabla_Z\alpha_r)(W)-\ddbar r(W,\nabla_Z\bar L_r),
\]
so we have
\begin{equation}
\label{eq:alpha_r_covariant}
  (\nabla_Z\alpha_r)(W)-\ddbar r(W,\nabla_Z\bar L_r)-(\nabla_W\alpha_r)(Z)+\ddbar r(Z,\nabla_W\bar L_r)=-\alpha_r(T_\nabla(Z,W)).
\end{equation}
However, the invariant definition of the exterior derivative gives us the following relation between distributions:
\begin{align}
  \nonumber\partial\alpha_r(Z,W)&=Z(\alpha_r(W))-W(\alpha_r(Z))-\alpha_r([Z,W])\\
  \label{eq:alpha_r_differential}&=(\nabla_Z\alpha_r)(W)-(\nabla_W\alpha_r)(Z)+\alpha_r(T_\nabla(Z,W)).
\end{align}
If we substitute \eqref{eq:alpha_r_differential} in \eqref{eq:alpha_r_covariant}, we obtain
\[
  \partial\alpha_r(Z,W)-\ddbar r(W,\nabla_Z\bar L_r)+\ddbar r(Z,\nabla_W\bar L_r)=0.
\]
Hence, we may define
\begin{equation}
\label{eq:beta_unmixed_defn}
  \beta_r(Z,W)=-\frac{i}{2}\left(\ddbar r(W,\nabla_Z\bar L)-\ddbar r(Z,\nabla_W\bar L)\right)\text{ for all }Z,W\in T^{1,0}(U),
\end{equation}
to obtain a continuous form that satisfies \eqref{eq:beta_C3_defn} in the weak sense.  Since $\beta_r$ is real-valued, we also have $\beta_r(\bar Z,\bar W)=\overline{\beta_r(Z,W)}$.

We must now consider $\beta_r(Z,\bar W)$.  Note that \eqref{eq:torsion_free} implies that $[Z,\bar W]=\nabla_Z\bar W-\nabla_{\bar W}Z$, so we have the necessary decomposition of $[Z,\bar W]$ into its $T^{1,0}(U)$ and $T^{0,1}(U)$ components.  With this in mind, we may use the invariant definition of $\dbar$ to compute the distributional derivative
\[
  \dbar\alpha_r(\bar W,Z)=\bar W(\alpha_r(Z))-\alpha_r(\nabla_{\bar W}Z)=(\nabla_{\bar W}\alpha_r)(Z),
\]
so \eqref{eq:complex_hessians_equal} gives us
\[
  \dbar\alpha_r(\bar W,Z)=H^3_\nabla(\bar W,Z,\bar L_r)r+\ddbar r(Z,\nabla_{\bar W}\bar L_r).
\]
As noted earlier, weak derivatives satisfy the same commutation relations as classical derivatives, so \eqref{eq:H_3_second_symmetry_mixed} gives us
\[
  \dbar\alpha_r(\bar W,Z)=H^3_\nabla(\bar W,\bar L_r,Z)r+\ddbar r(Z,\nabla_{\bar W}\bar L_r).
\]
Applying \eqref{eq:H_3_first_symmetry_unmixed} with \eqref{eq:complex_hessians_equal}, we obtain
\[
  \dbar\alpha_r(\bar W,Z)=H^3_\nabla(\bar L_r,\bar W,Z)r-\ddbar r(Z,T_\nabla(\bar W,\bar L_r))+\ddbar r(Z,\nabla_{\bar W}\bar L_r).
\]
Since $\alpha_r$ is real-valued, we immediately obtain
\[
  \partial\alpha_r(Z,\bar W)=H^3_\nabla(L_r,Z,\bar W)r-\ddbar r(T_\nabla(Z,L_r),\bar W)+\ddbar r(\nabla_Z L_r,\bar W).
\]
Adding these (and using \eqref{eq:H_3_second_symmetry_mixed} with Remark \ref{rem:real_vs_complex} to simplify), we see that
\begin{multline}
\label{eq:beta_mixed_defn}
  \beta_r(Z,\bar W)=-i H^3_\nabla(X_r,Z,\bar W)r+\frac{i}{2}\ddbar r(T_\nabla(Z,L_r),\bar W)-\frac{i}{2}\ddbar r(\nabla_Z L_r,\bar W)\\
  +\frac{i}{2}\ddbar r(Z,T_\nabla(\bar W,\bar L_r))-\frac{i}{2}\ddbar r(Z,\nabla_{\bar W}\bar L_r)\text{ for all }Z,W\in T^{1,0}(U)
\end{multline}
satisfies \eqref{eq:beta_C3_defn} in the weak sense, but thanks to Lemma \ref{lem:third_derivatives_exist}, it is continuous when $r$ is an admissible $C^2$ defining function and $X_r$ is the unique $C^1$ section satisfying \eqref{eq:metric_compatibility}.

Now, suppose $Z,W\in\mathcal{N}_P(\partial\Omega)$ for some $P\in\partial\Omega$.  For any section $Y$ of $T(U)$, \eqref{eq:Lr_equals_one} also implies
\[
  0=Y(L_r r)=\Hess_\nabla(Y,L_r)r+(\nabla_Y L_r)r,
\]
and hence
\begin{equation}
\label{eq:normal_covariant_derivative}
  \partial r(\nabla_Y L_r)=-\Hess_\nabla(Y,L_r)r\text{ on }U\text{ for all }Y\in T(U).
\end{equation}
Now we may use \eqref{eq:null_space_identity} and \eqref{eq:normal_covariant_derivative} with \eqref{eq:beta_unmixed_defn} to obtain \eqref{eq:beta_unmixed_nullspace}.  Similarly, \eqref{eq:null_space_identity}, \eqref{eq:normal_covariant_derivative}, and \eqref{eq:Hessian_symmetry} can be substituted in \eqref{eq:beta_mixed_defn} to obtain
\begin{multline*}
  \beta_r(Z,\bar W)=-i H^3_\nabla(X_r,Z,\bar W)r+\frac{i}{2}(\Hess_\nabla(L_r,Z)r)\alpha_r(\bar W)\\
  +\frac{i}{2}\alpha_r(Z)(\Hess_\nabla(\bar L_r,\bar W)r).
\end{multline*}
Since $\bar L_r=2X_r-L_r$, we can use \eqref{eq:complex_hessians_equal} to rewrite this as \eqref{eq:beta_mixed_nullspace}.
\end{proof}

\begin{cor}
\label{cor:beta_on_submanifold}
  Let $M$ be a Hermitian manifold of complex dimension $n\geq 2$ and let $\Omega\subset M$ be a relatively compact pseudoconvex domain with $C^2$ boundary.  Let $r\in C^2(U)$ be an admissible defining function for $\Omega$ defined on some neighborhood $U$ of $\partial\Omega$.  Let $S\subset\partial\Omega$ be a complex submanifold of complex dimension $m\geq 1$.  Then the pullback $\iota_S^*\beta_r$ of $\beta_r$ by the inclusion map $\iota_S:S\rightarrow M$ is a continuous, real-valued $(1,1)$-form that is $d_S$-exact in the weak sense.
\end{cor}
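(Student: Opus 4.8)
The plan is to derive the statement from Propositions \ref{prop:beta} and \ref{prop:alpha_closed} together with elementary bidegree bookkeeping, so almost no new computation is needed. Continuity and the fact that $\iota_S^*\beta_r$ is real-valued are immediate: Proposition \ref{prop:beta} already produces a continuous, real-valued $2$-form $\beta_r$ on $U$, and pullback by the smooth map $\iota_S$ preserves both properties. To see that $\iota_S^*\beta_r$ is of type $(1,1)$, I would first observe that $T^{1,0}_P(S)\subseteq\mathcal{N}_P(\partial\Omega)$ for every $P\in S$: if $\rho$ is a $C^2$ defining function for $\Omega$, then $\rho\circ\iota_S\equiv 0$ forces $\ddbar\rho(Z,\bar W)=0$ for all $Z,W\in T^{1,0}_P(S)$, and since $\Omega$ is pseudoconvex the Levi form is positive semidefinite, so a null vector of it automatically lies in its kernel $\mathcal{N}_P(\partial\Omega)$. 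Applying \eqref{eq:beta_unmixed_nullspace} with $Z,W\in T^{1,0}_P(S)$ then shows that the $(2,0)$-component of $\iota_S^*\beta_r$ vanishes, and conjugating kills the $(0,2)$-component as well.

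For $d_S$-exactness I would use the following general observation: if $\mu$ is a real-valued $1$-form on a complex manifold that is $d$-closed in the weak sense, and $\mu=\mu^{1,0}+\mu^{0,1}$ is its type decomposition (so $\mu^{0,1}=\overline{\mu^{1,0}}$), then the bidegree splitting of $d\mu=0$ reads $\partial\mu^{1,0}=0$, $\dbar\mu^{0,1}=0$, and $\dbar\mu^{1,0}+\partial\mu^{0,1}=0$, whence
\[
  -\frac{i}{2}(\partial\mu-\dbar\mu)=-\frac{i}{2}\bigl(\partial\mu^{0,1}-\dbar\mu^{1,0}\bigr)=-i\,\partial\mu^{0,1}=d(-i\mu^{0,1})
\]
in the weak sense. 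Taking $\mu=\iota_S^*\alpha_r$, Proposition \ref{prop:alpha_closed} says $\mu$ is a continuous, real-valued $1$-form on $S$ that is weakly $d_S$-closed; and since $\iota_S$ is a holomorphic immersion, pulling back the weak identity \eqref{eq:beta_C3_defn} gives $\iota_S^*\beta_r=-\frac{i}{2}(\partial_S-\dbar_S)\iota_S^*\alpha_r$ weakly. Combining this with the displayed identity yields $\iota_S^*\beta_r=d_S\bigl(-i(\iota_S^*\alpha_r)^{0,1}\bigr)$, and since $(\iota_S^*\alpha_r)^{0,1}$ is a continuous $1$-form on $S$, this exhibits $\iota_S^*\beta_r$ as weakly $d_S$-exact. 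Note that the primitive $-i(\iota_S^*\alpha_r)^{0,1}$ is globally defined even though $\iota_S^*\alpha_r$ itself may carry nontrivial de Rham cohomology (as on the worm), which is why the argument must be organized globally by type rather than by patching local potentials.

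The one technical point that needs care is the justification that $\iota_S^*$ intertwines the \emph{weak} operators $\partial$ and $\dbar$, i.e., that the weak identity \eqref{eq:beta_C3_defn} on $U$ really pulls back to the corresponding weak identity on $S$; this is not automatic because $\alpha_r$ is only continuous, and I expect it to be the main obstacle. I would handle it by working in the holomorphic coordinates $\set{z_j}\cup\set{w_j}$ of the proof of Proposition \ref{prop:alpha_closed}, where one has the explicit local representation $\iota_S^*\alpha_r=d_S g$ with $g$ written out concretely (and with the pluriharmonic conjugate smooth by elliptic regularity); in those coordinates $\iota_S^*\beta_r=-i\,\partial_S\dbar_S g$ is a direct computation, and the patched result is independent of the chosen local primitive $g$ since any two differ by a locally constant function. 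An equivalent route is to mollify $\alpha_r$ in a tubular neighborhood of $\partial\Omega$ and pass to the limit, using the continuity of $\beta_r$ from Proposition \ref{prop:beta}. Once this is settled, the remainder is the purely formal bidegree bookkeeping sketched above.
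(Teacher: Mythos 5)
Your proposal is correct and follows essentially the same bidegree-bookkeeping strategy as the paper's proof, but there are two worthwhile points of comparison. First, the choice of primitive: you invoke Proposition \ref{prop:alpha_closed} to know that $\iota_S^*\alpha_r$ is fully $d_S$-closed, and you use all three bidegree components of $d_S\iota_S^*\alpha_r=0$ to arrive at the primitive $-i\pi_{(0,1)}\iota_S^*\alpha_r$. The paper instead observes that once \eqref{eq:beta_unmixed_nullspace} forces $\iota_S^*\beta_r$ to be of type $(1,1)$, the $(2,0)$ and $(0,2)$ pieces of the pulled-back identity \eqref{eq:beta_C3_defn} already give $\partial_S\pi_{(1,0)}\iota_S^*\alpha_r=0$ and $\dbar_S\pi_{(0,1)}\iota_S^*\alpha_r=0$, which is enough for the (real-valued) primitive $\tfrac{i}{2}\pi_{(1,0)}\iota_S^*\alpha_r-\tfrac{i}{2}\pi_{(0,1)}\iota_S^*\alpha_r$; Proposition \ref{prop:alpha_closed} is never cited. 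The two primitives differ exactly by $-\tfrac{i}{2}\iota_S^*\alpha_r$, which is $d_S$-closed, so they give the same $\beta_r$. Your route is slightly heavier, but its extra ingredient (full $d_S$-closedness) is available anyway.

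Second, the ``one technical point'' you flag — whether pulling back the weak identity \eqref{eq:beta_C3_defn} from a neighborhood $U\subset M$ to the lower-dimensional submanifold $S\subset\partial\Omega$ is legitimate when $\alpha_r$ is only continuous — is a genuine issue that the paper's proof also relies on but does not comment on explicitly. Your two suggested resolutions both work and are the natural ones: either re-use the local coordinates from the proof of Proposition \ref{prop:alpha_closed}, where $\iota_S^*\alpha_r=d_S g$ for a $C^1$ function $g$ and the pluriharmonic part is smooth by elliptic regularity, so $\iota_S^*\beta_r=-i\partial_S\dbar_S g$ can be checked by direct computation and patched (the primitive being global even though $g$ is not); or mollify tangentially on $S$ and pass to the limit using the continuity of $\alpha_r$ and $\beta_r$. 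So your proof is sound and, if anything, more careful than the paper's on this point.
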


\begin{proof}
  We see that $\iota_S^*\beta_r$ is a $(1,1)$-form because of \eqref{eq:beta_unmixed_nullspace}.  If we decompose $\iota_S^*\alpha_r$ into its $(0,1)$ and $(1,0)$ components via $\iota_S^*\alpha_r=\pi_{(1,0)}\iota_S^*\alpha_r+\pi_{(0,1)}\iota_S^*\alpha_r$, then since $\iota_S^*\beta_r$ is a $(1,1)$-form, $\partial_S\pi_{(1,0)}\iota_S^*\alpha_r=0$ and $\dbar_S\pi_{(0,1)}\iota_S^*\alpha_r=0$ in the weak sense.  With this in mind, \eqref{eq:beta_C3_defn} gives us $\iota_S^*\beta_r=d_S\left(-\frac{i}{2}\pi_{(0,1)}\iota_S^*\alpha_r+\frac{i}{2}\pi_{(1,0)}\iota_S^*\alpha_r\right)$ in the weak sense, so $\iota_S^*\beta_r$ is $d_S$-exact.
\end{proof}

Observe that $\alpha_r$ and $\beta_r$ both depend on the choice of metric via \eqref{eq:metric_compatibility_complex}, but we omit this dependence in our notation.  When $\partial\Omega$ is only $C^2$, the existence of a defining function which is admissible with respect to two different metrics is not obvious (unless the metrics are conformally equivalent, but $\alpha_r$ and $\beta_r$ are invariant under conformal changes of metric; see the proof of Theorem \ref{thm:geometric_equivalence}), so the impact of this dependence is negligible.  When $\partial\Omega$ is $C^3$, these forms are invariant when restricted to the null-space of the Levi-form, as we will now show.
\begin{lem}
\label{lem:geometric_invariance}
  Let $M$ be a complex manifold of complex dimension $n\geq 2$ and let $\Omega\subset M$ be a relatively compact pseudoconvex domain with $C^3$ boundary.  Let $r\in C^3(U)$ be a defining function for $\Omega$ defined on some neighborhood $U$ of $\partial\Omega$.  Let $\left<\cdot,\cdot\right>_0$ and $\left<\cdot,\cdot\right>_1$ be two Hermitian metrics for $M$.  Let $\alpha_r^j$ and $\beta_r^j$ denote the forms defined by \eqref{eq:alpha_r_defn} and \eqref{eq:beta_C3_defn} with respect to the metric $\left<\cdot,\cdot\right>_j$ for $j\in\{0,1\}$.  For any $P\in\partial\Omega$ and $Z,W\in\mathcal{N}_P(\partial\Omega)$, we have $\alpha_r^1(Z)=\alpha_r^0(Z)$ and $\beta_r^1(Z,\bar W)=\beta_r^0(Z,\bar W)$ at $P$.
\end{lem}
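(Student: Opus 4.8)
The plan is to dispose of the $\alpha_r$ part by a pointwise argument and then bootstrap to $\beta_r$ through the formula \eqref{eq:beta_mixed_nullspace}. Fix $P\in\partial\Omega$ and $Z,W\in\mathcal{N}_P(\partial\Omega)$. For $j\in\{0,1\}$, \eqref{eq:Lr_equals_one} gives $\partial r(L_r^j)=1$, so $L_r^1-L_r^0$ lies in $T^{1,0}_P(\partial\Omega)$; since $Z\in\mathcal{N}_P(\partial\Omega)$ we get $\ddbar r(Z,\overline{L_r^1-L_r^0})=0$, hence $\alpha_r^1(Z)=\ddbar r(Z,\bar L_r^1)=\ddbar r(Z,\bar L_r^0)=\alpha_r^0(Z)$ by \eqref{eq:alpha_r_defn}, and likewise $\alpha_r^1(\bar W)=\alpha_r^0(\bar W)$. (Equivalently, $\alpha_r(Z)=\ddbar r(Z,\bar N)$ for any $(1,0)$-vector $N$ at $P$ with $\partial r(N)=1$, which is manifestly metric-free.) Write $\alpha_r(Z)$ and $\alpha_r(\bar W)$ for these common values.

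Since $r\in C^3(U)$, each $r$ is admissible for each metric, and $X_r^j$ is compatible with $r$ by Lemma \ref{lem:signed_distance_function_equivalence}, so \eqref{eq:beta_mixed_nullspace} holds classically for each $j$; in view of the previous paragraph it therefore suffices to show that
\[
  Q_j:=H^3_{\nabla^j}(X_r^j,Z,\bar W)r-\bigl(\Hess_{\nabla^j}(X_r^j,Z)r\bigr)\alpha_r(\bar W)-\alpha_r(Z)\bigl(\Hess_{\nabla^j}(X_r^j,\bar W)r\bigr)
\]
is independent of $j$, whereupon $\beta_r^1(Z,\bar W)-\beta_r^0(Z,\bar W)=-i(Q_1-Q_0)$ vanishes. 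The comparison rests on three structural facts. First, $\delta L:=L_r^1-L_r^0$ and $\delta X:=X_r^1-X_r^0=\tfrac12(\delta L+\overline{\delta L})$ are $C^1$ sections of $T^{1,0}(U)$ and $T(U)$, and subtracting \eqref{eq:Lr_equals_one} for the two metrics gives $\partial r(\delta L)\equiv 0$, hence $dr(\delta X)\equiv 0$ on $U$, so $\delta L$ and $\delta X$ are tangent to every level set of $r$. Second, the difference of the Chern connections of the two metrics is a $(1,0)$-form $A$ with values in $\mathrm{End}\,T^{1,0}(U)$: $\nabla^1_Y V-\nabla^0_Y V=A(Y)V$ for $V\in T^{1,0}(U)$, depending only on the $(1,0)$-component of $Y$, and acting by the conjugate on $T^{0,1}(U)$. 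Third, $Z,W\in\mathcal{N}_P(\partial\Omega)$, so \eqref{eq:null_space_identity} collapses $\ddbar r(Z,\cdot)$ and $\ddbar r(\cdot,\bar W)$ into multiples of $\alpha_r$, while \eqref{eq:normal_covariant_derivative} evaluates $\partial r(\nabla^j_Y L_r^j)$ as a Hessian; the same identity applied to $\delta L$ controls $\partial r(\nabla^0_Y\delta L)$ because $\partial r(\delta L)\equiv 0$.

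With these in hand, the argument is a direct expansion. Expanding $\Hess_{\nabla^1}(X_r^1,Z)r-\Hess_{\nabla^0}(X_r^0,Z)r$ by the Leibniz rule produces a $\delta X$-term, handled by the tangency of $\delta L$ together with \eqref{eq:null_space_identity}, and an $A$-term, handled by pairing with $\partial r$ and applying \eqref{eq:normal_covariant_derivative}; the same for $\bar W$. The third-order difference $H^3_{\nabla^1}(X_r^1,Z,\bar W)r-H^3_{\nabla^0}(X_r^0,Z,\bar W)r$ is expanded the same way, then reorganized with Lemma \ref{lem:H_3_symmetries}, and in particular with the cyclic identity \eqref{eq:H_3_symmetry_cycle} valid for $X_r=\tfrac12(L_r+\bar L_r)$, so that each curvature tensor $R_{\nabla^j}$ appears only contracted with $dr$ and is re-expressed through $\Hess_{\nabla^j}(\cdot,\cdot)r$. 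Reducing every metric-dependent contribution in $Q_1-Q_0$ to combinations of $A$, $\delta L$, $\ddbar r$, $\partial r$ and their first tangential derivatives, the Levi-null condition recasts these (via \eqref{eq:null_space_identity}) as multiples of the metric-free quantities $\alpha_r(Z)$ and $\alpha_r(\bar W)$, and one checks that what remains cancels. Hence $Q_1=Q_0$.

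The main obstacle is exactly this last expansion: $H^3_\nabla(X_r,Z,\bar W)r$ is a genuine third derivative of $r$, and the metric enters it simultaneously through $X_r$, through the explicit connection terms in the definition of $H^3$, and through the Hessians concealed inside $H^3$; keeping these three channels separate while matching the leftover against $\alpha_r(\bar W)\bigl[\Hess_{\nabla^1}(X_r^1,Z)r-\Hess_{\nabla^0}(X_r^0,Z)r\bigr]$ and its conjugate is where essentially all the work lies. Lemma \ref{lem:H_3_symmetries}, the $(1,0)$-type of $A$, the relations $\partial r(\delta L)\equiv 0$ and \eqref{eq:normal_covariant_derivative}, and the Levi-null condition \eqref{eq:null_space_identity} are the levers that force the cancellation. (Should the bookkeeping prove too heavy, one can instead join $\left<\cdot,\cdot\right>_0$ to $\left<\cdot,\cdot\right>_1$ by a smooth path of Hermitian metrics, their set being convex, and show $\tfrac{d}{dt}\beta_r^t(Z,\bar W)=0$ for $Z,W\in\mathcal{N}_P(\partial\Omega)$, replacing $A$ by $\tfrac{d}{dt}\nabla^t$ and $\delta L$ by $\tfrac{d}{dt}L_r^t$; the same identities drive the argument.)
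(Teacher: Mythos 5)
Your treatment of $\alpha_r$ is exactly the paper's argument and is fine. For $\beta_r$, however, you sketch a direct expansion of $Q_1-Q_0$ and assert that "one checks that what remains cancels," but the specific device that makes this cancellation go through is missing, and without it the expansion does not close up.

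The gap is in the handling of the $\delta X$-term coming from $H^3$. Writing $H^3_\nabla(X_r,Z,\bar W)r=X_r\bigl(\ddbar r(Z,\bar W)\bigr)-\ddbar r(\nabla_{X_r}Z,\bar W)-\ddbar r(Z,\nabla_{X_r}\bar W)$ and subtracting the two metrics leaves the term $\delta X\bigl(\ddbar r(Z,\bar W)\bigr)|_P$ with $\delta X=X_r^1-X_r^0$. You attribute its vanishing to "tangency of $\delta L$ together with \eqref{eq:null_space_identity}," but \eqref{eq:null_space_identity} is a pointwise statement about the \emph{values} of $\ddbar r$ at $P$, not about its tangential \emph{derivatives}; and tangency of $\delta X$ does not by itself annihilate a tangential derivative of the Levi form. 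What actually kills this term in the paper is the observation that if one first takes $W=Z$ and extends $Z$ to a $C^2$ section of $T^{1,0}(\partial\Omega)$ with $Z|_P\in\mathcal N_P(\partial\Omega)$, then pseudoconvexity forces $\ddbar r(Z,\bar Z)\geq 0$ on $\partial\Omega$ with equality at $P$, so $P$ is a \emph{minimum}, hence a critical point, of $\ddbar r(Z,\bar Z)|_{\partial\Omega}$; only then does tangency of $\delta X$ give $\delta X\bigl(\ddbar r(Z,\bar Z)\bigr)|_P=0$. The remaining two terms are then reduced by \eqref{eq:null_space_identity} to $\partial r$ of a covariant derivative, rewritten as Hessians via $Zr\equiv 0$ on $\partial\Omega$ (again using tangency of $\delta X$, legitimately this time, since $Zr$ is identically zero along $\partial\Omega$), and matched against the remaining terms of $Q_j$. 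Finally, one must polarize to pass from $(Z,\bar Z)$ to $(Z,\bar W)$ --- an explicit step you do not mention, and one that is necessary precisely because the critical-point argument is only available on the diagonal. So the shape of your plan is right, but the two load-bearing ingredients (the minimum/critical-point observation from pseudoconvexity, and the final polarization) are absent, and the cancellation you assert does not follow from the identities you invoke alone.
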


\begin{proof}
  Let $L_r^j$ be the unique $C^2$ section of $T^{1,0}(U)$ satisfying \eqref{eq:metric_compatibility_complex} with respect to $\left<\cdot,\cdot\right>_j$ for $j\in\{0,1\}$ and set $X_r^j=\frac{1}{2}(L_r^j+\bar L_r^j)$ as in Remark \ref{rem:real_vs_complex}.  Since $L_r^1-L_r^0\in T^{1,0}(\partial\Omega)$ by \eqref{eq:Lr_equals_one}, we have $\ddbar r(Z,\overline{L_r^1-L_r^0})=0$ at $P$, and hence $\alpha_r^1(Z)=\alpha_r^0(Z)$ at $P$.

  Let $Z$ be a $C^2$ section of $T^{1,0}(\partial\Omega)$ satisfying $Z|_P\in\mathcal{N}_P(\partial\Omega)$.  Then $\ddbar r(Z,\bar Z)\geq 0$ on $\partial\Omega$, and $\ddbar r(Z,\bar Z)=0$ at $P$, so $P$ is a critical point for $\ddbar r(Z,\bar Z)$ on $\partial\Omega$.  Since $X_r^1-X_r^0$ is tangential on $\partial\Omega$ by \eqref{eq:Xr_equals_one}, we have $(X_r^1-X_r^0)\ddbar r(Z,\bar Z)=0$ at $P$.  Using \eqref{eq:complex_hessians_equal}, we have
  \begin{multline*}
    H^3_{\nabla^1}(X_r^1,Z,\bar Z)r-H^3_{\nabla^0}(X_r^0,Z,\bar Z)r=\\-\ddbar r((\nabla_{X_r^1}^1-\nabla_{X_r^0}^0)Z,\bar Z)-\ddbar r(Z,(\nabla_{X_r^1}^1-\nabla_{X_r^0}^0)\bar Z)
  \end{multline*}
  at $P$, where $\nabla^j$ denotes the Chern connection with respect to $\left<\cdot,\cdot\right>_j$ for $j\in\{0,1\}$.  Now \eqref{eq:null_space_identity} gives us
  \begin{multline*}
    H^3_{\nabla^1}(X_r^1,Z,\bar Z)r-H^3_{\nabla^0}(X_r^0,Z,\bar Z)r=\\-\partial r(\nabla_{X_r^1}^1 Z)\alpha_r^1(\bar Z)+\partial r(\nabla_{X_r^0}^0Z)\alpha_r^0(\bar Z)-\alpha_r^1(Z)\dbar r(\nabla_{X_r^1}^1\bar Z)+\alpha_r^0(Z)\dbar r(\nabla_{X_r^0}^0\bar Z)
  \end{multline*}
  at $P$.  Since $Zr\equiv 0$ on $\partial\Omega$, this is equivalent to
  \begin{multline*}
    H^3_{\nabla^1}(X_r^1,Z,\bar Z)r-H^3_{\nabla^0}(X_r^0,Z,\bar Z)r=(\Hess_{\nabla^1}(X_r^1,Z)r)\alpha_r^1(\bar Z)\\
    -(\Hess_{\nabla^0}(X_r^0,Z)r)\alpha_r^0(\bar Z)+\alpha_r^1(Z)\Hess_{\nabla^1}(X_r^1,\bar Z)r-\alpha_r^0(Z)\Hess_{\nabla^0}(X_r^0,\bar Z)r.
  \end{multline*}
  If we substitute this in \eqref{eq:beta_mixed_nullspace}, we see that $\beta_r^1(Z,\bar Z)=\beta_r^0(Z,\bar Z)$ at $P$ for any $Z\in\mathcal{N}_P(\partial\Omega)$.  Using a standard polarization identity, we obtain the main result.
\end{proof}

As with $\alpha_r$ in \eqref{eq:alpha_r_geometric}, it will be helpful to decompose $\beta_r$ into a component which depends on the admissible defining function $r$ and components which depend only on the Hermitian geometry of $\partial\Omega$.  This can be obtained by combining \eqref{eq:beta_C3_defn} with \eqref{eq:alpha_r_geometric}, but a more useful formula can be obtained by restricting to the nullspace of the Levi-form and using \eqref{eq:beta_mixed_nullspace} to simplify.  Fix $P\in\partial\Omega$ and $Z\in \mathcal{N}_P(\partial\Omega)$.  Let $\{S_j\}_{j=1}^{2n}\subset T_P(U)$ be an orthonormal basis such that $S_{2n}=\frac{X_r}{|X_r|}$ and $S_{2j}=JS_{2j-1}$ for all $1\leq j\leq n$.  Then we may define an orthonormal basis $\{W_j\}_{j=1}^{n}\subset T_P^{1,0}(U)$ by setting $W_j=\frac{1}{\sqrt{2}}(S_{2j}+iS_{2j-1})$ for each $1\leq j\leq n$.  Observe that $W_n=\frac{L_r}{|L_r|}$.  By \eqref{eq:H_3_with_one_X_pointwise} and \eqref{eq:complex_hessians_equal}, we have
\[
  H^3_\nabla(Z,\bar Z,X_r)r|_P=|dr|^{-1}(\ddbar|dr|)(Z,\bar Z)
  -\sum_{j=1}^{2n-1}\abs{\sff_\nabla(Z,S_j)}^2.
\]
Since $S_{2n-1}=-JS_{2n}=-\frac{JX_r}{|X_r|}$, \eqref{eq:alpha_r_geometric} gives us
\[
  \abs{\alpha_r(Z)-Z\log|dr|}=\abs{\sff_\nabla(Z,S_{2n-1})},
\]
so
\begin{multline*}
  H^3_\nabla(Z,\bar Z,X_r)r|_P=\\
  |dr|^{-1}(\ddbar|dr|)(Z,\bar Z)
  -\sum_{j=1}^{2n-2}\abs{\sff_\nabla(Z,S_j)}^2-\abs{\alpha_r(Z)-Z\log|dr|}^2.
\end{multline*}
For $1\leq j\leq n-1$, $S_{2j}=\frac{1}{\sqrt{2}}(W_j+\bar W_j)$ and $S_{2j-1}=-\frac{i}{\sqrt{2}}(W_j-\bar W_j)$.  For $Z\in\mathcal{N}_P(\partial\Omega)$, \eqref{eq:sff_identity} and \eqref{eq:complex_hessians_equal} imply that $\sff_\nabla(Z,\bar W_j)|_P=0$ for $1\leq j\leq n-1$, so we have
\[
  \sff_\nabla(Z,S_{2j})|_P=\frac{1}{\sqrt{2}}\sff_\nabla(Z,W_j)
\]
and
\[
  \sff_\nabla(Z,S_{2j-1})|_P=-\frac{i}{\sqrt{2}}\sff_\nabla(Z,W_j).
\]
Hence,
\begin{multline*}
  H^3_\nabla(Z,\bar Z,X_r)r|_P=\\
  |dr|^{-1}(\ddbar|dr|)(Z,\bar Z)
  -\sum_{j=1}^{n-1}\abs{\sff_\nabla(Z,W_j)}^2-\abs{\alpha_r(Z)-Z\log|dr|}^2.
\end{multline*}
We can compute
\[
  (\ddbar\log|dr|)(Z,\bar Z)=|dr|^{-1}(\ddbar|dr|)(Z,\bar Z)-\abs{Z\log|dr|}^2,
\]
so
\begin{multline}
\label{eq:beta_r_geometric_before_symmetry}
  H^3_\nabla(Z,\bar Z,X_r)r|_P=\\
  (\ddbar\log|dr|)(Z,\bar Z)
  -\sum_{j=1}^{n-1}\abs{\sff_\nabla(Z,W_j)}^2-\abs{\alpha_r(Z)}^2+2\re(\alpha_r(Z)\bar Z\log|dr|).
\end{multline}

To relate \eqref{eq:beta_r_geometric_before_symmetry} to \eqref{eq:beta_mixed_nullspace}, we substitute \eqref{eq:beta_r_geometric_before_symmetry} into \eqref{eq:H_3_symmetry_cycle} and use \eqref{eq:complex_hessians_equal} to simplify and obtain
\begin{multline*}
  H^3_\nabla(X_r,Z,\bar Z)r|_P=
  (\ddbar\log|dr|)(Z,\bar Z)
  -\sum_{j=1}^{n-1}\abs{\sff_\nabla(Z,W_j)}^2-\abs{\alpha_r(Z)}^2\\
  +2\re(\alpha_r(Z)\bar Z\log|dr|)+\re\left(\ddbar r(T_\nabla(Z,L_r),\bar Z)\right)+\frac{1}{2}(R_\nabla(Z,\bar Z)\bar L_r)r.
\end{multline*}
If we set $\nu_{\mathbb{C}}=\frac{L_r}{|L_r|}$, then \eqref{eq:metric_compatibility_complex} and \eqref{eq:dual_norm_identity_complex} imply that
\[
  (R_\nabla(Z,\bar Z)\bar L_r)r=\left<R_\nabla(Z,\bar Z)\bar \nu_{\mathbb{C}},\bar \nu_{\mathbb{C}}\right>=-\left<R_\nabla(Z,\bar Z)\nu_{\mathbb{C}},\nu_{\mathbb{C}}\right>,
\]
where the second equality follows from the usual symmetries of the curvature tensor for a metric-compatible connection.  On the other hand, \eqref{eq:Hessian_Y_X}, \eqref{eq:Hessian_symmetry}, \eqref{eq:torsion_free}, and \eqref{eq:dual_norm_identity_complex} imply that
\[
  2\re(\alpha_r(Z)\bar Z\log|dr|)+\re\left(\ddbar r(T_\nabla(Z,L_r),\bar Z)\right)=2\re(\alpha_r(Z)\Hess_\nabla(X_r,\bar Z)r),
\]
so we have
\begin{multline*}
  H^3_\nabla(X_r,Z,\bar Z)r|_P=
  (\ddbar\log|dr|)(Z,\bar Z)
  -\sum_{j=1}^{n-1}\abs{\sff_\nabla(Z,W_j)}^2-\abs{\alpha_r(Z)}^2\\
  +2\re(\alpha_r(Z)\Hess_\nabla(X_r,\bar Z)r)-\frac{1}{2}\left<R_\nabla(Z,\bar Z)\nu_{\mathbb{C}},\nu_{\mathbb{C}}\right>.
\end{multline*}
If we substitute this in \eqref{eq:beta_mixed_nullspace}, we have
\begin{multline}
\label{eq:beta_r_geometric}
  \beta_r(Z,\bar Z)|_P=-i(\ddbar\log|dr|)(Z,\bar Z)
  +i\sum_{j=1}^{n-1}\abs{\sff_\nabla(Z,W_j)}^2\\+\frac{i}{2}\left<R_\nabla(Z,\bar Z)\nu_{\mathbb{C}},\nu_{\mathbb{C}}\right>\text{ for all }Z\in\mathcal{N}_P(\partial\Omega).
\end{multline}

\section{Levi-forms of Level Curves}

\label{sec:level_curves}

We now consider how the Levi-forms of level curves of some admissible defining function $r$ relate to the Levi-form on the boundary.  It will be helpful to have a canonical way to extend sections from the boundary into the interior.  The following is an adaptation of parallel transport that preserves \eqref{eq:tangential_level_curves} in a natural way.
\begin{lem}
  Let $M$ be a Hermitian manifold of complex dimension $n\geq 2$ and let $\Omega\subset M$ be a relatively compact domain with $C^2$ boundary.  Let $U$ be a neighborhood of $\partial\Omega$ and let $r\in C^2(U)$ be an admissible defining function for $\Omega$.  Let $X_r$ be the unique $C^1$ section of $T(U)$ satisfying \eqref{eq:metric_compatibility} and let $L_r$ be the unique $C^1$ section of $T^{1,0}(U)$ satisfying \eqref{eq:metric_compatibility_complex}.  For every $C^1$ section $\tilde Z$ of $T^{1,0}(\partial\Omega)$, there exists a unique $C^1$ section $Z$ of $T^{1,0}(U)$ such that
  \begin{enumerate}
    \item $Z|_{\partial\Omega}=\tilde Z$,
    \item
    \begin{equation}
    \label{eq:tangential_level_curves}
      \partial r(Z)\equiv 0\text{ on }U,
    \end{equation}
    and
    \item
    \begin{equation}
    \label{eq:good_coordinate_ODE}
      \nabla_{X_r} Z=-(\Hess_\nabla(X_r,Z)r)L_r\text{ on }U.
    \end{equation}
  \end{enumerate}
\end{lem}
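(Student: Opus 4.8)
The plan is to construct $Z$ by solving \eqref{eq:good_coordinate_ODE} as a first-order linear ordinary differential equation along the integral curves of $X_r$, then to check that \eqref{eq:tangential_level_curves} is an automatic consequence, and finally to use the admissibility of $r$ to upgrade the solution to a section of class $C^1$.

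First I would set up the flow parametrization. Since $dr(X_r)\equiv 1$ by \eqref{eq:Xr_equals_one}, the $C^1$ vector field $X_r$ is transverse to the level sets of $r$, and its flow $\Phi$ (normalized by $\Phi(p,0)=p$ for $p\in\partial\Omega$) is $C^1$ and satisfies $r(\Phi(p,t))=t$. Replacing $U$ by a smaller neighborhood of $\partial\Omega$ if necessary, I may assume $\Phi$ is a $C^1$ diffeomorphism of $\partial\Omega\times(-\eps,\eps)$ onto $U$ with $C^1$ inverse. Fixing a smooth local frame $\{E_k\}$ for $T^{1,0}(U)$ and writing $Z=z^kE_k$, $L_r=\ell^kE_k$, $\nabla_{X_r}E_j=\Gamma^k_jE_k$, and using that $\Hess_\nabla(X_r,Z)r$ depends only on the pointwise value of $Z$ (as recorded after the definition of $\Hess_\nabla$), equation \eqref{eq:good_coordinate_ODE} becomes the linear system $X_rz^k=-\big(\Gamma^k_j+(\Hess_\nabla(X_r,E_j)r)\ell^k\big)z^j$ (summed over $j$). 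Pulled back by $\Phi$ this reads $\partial_t\zeta^k=\mathcal A^k_j(p,t)\zeta^j$ with coefficients $\mathcal A^k_j=-\big(\Gamma^k_j+(\Hess_\nabla(X_r,E_j)r)\ell^k\big)\circ\Phi$ that are continuous, so for the initial data $\zeta^k(p,0)=\tilde z^k(p)$, where $\tilde Z=\tilde z^k(E_k|_{\partial\Omega})$ with $\tilde z^k\in C^1(\partial\Omega)$, it has a unique solution, continuous in $(p,t)$ and $C^1$ in $t$. Transporting back produces a continuous section $Z$ on $U$ with $Z|_{\partial\Omega}=\tilde Z$ for which $\nabla_{X_r}Z$ exists and is continuous, that is, (1) and (3) hold; and by uniqueness of solutions of linear ordinary differential equations, any section satisfying (1) and (3) equals this $Z$.

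Next I would verify \eqref{eq:tangential_level_curves}. Since $Z$ is of type $(1,0)$ we have $\partial r(Z)=Zr$, and by definition of the Hessian $X_r(Zr)=\Hess_\nabla(X_r,Z)r+(\nabla_{X_r}Z)r$. Substituting \eqref{eq:good_coordinate_ODE} and using $dr(L_r)=\partial r(L_r)\equiv 1$ from \eqref{eq:Lr_equals_one} gives $(\nabla_{X_r}Z)r=-\Hess_\nabla(X_r,Z)r$, hence $X_r(Zr)\equiv 0$ on $U$. Since $\tilde Z$ is tangent to $\partial\Omega$ we have $Zr=0$ on $\partial\Omega$, so $\partial r(Z)=Zr\equiv 0$ on $U$; thus (2) holds automatically once (1) and (3) do, and the $Z$ just constructed is the unique section satisfying all three conditions.

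The main obstacle is the regularity claim $Z\in C^1(U)$: because the coefficients $\mathcal A^k_j$ are a priori only continuous, the ODE by itself yields no differentiability of $Z$ in directions transverse to $X_r$. This is exactly where admissibility of $r$ is used. By Lemma \ref{lem:signed_distance_function_equivalence}, $X_r$ is compatible with $r$ in the sense of Definition \ref{defn:compatibility}; since $X_rr\equiv 1$ is constant, $X_r(E_jr)=[X_r,E_j]r=dr([X_r,E_j])\in C^1(U)$ by \eqref{eq:compatible_hypothesis_commutator}, and together with $(\nabla_{X_r}E_j)r=dr(\nabla_{X_r}E_j)\in C^1(U)$ this forces $\Hess_\nabla(X_r,E_j)r=X_r(E_jr)-(\nabla_{X_r}E_j)r\in C^1(U)$. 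Hence $\Gamma^k_j$, $\ell^k$, and $\Hess_\nabla(X_r,E_j)r$ are all of class $C^1$ on $U$, so $\mathcal A^k_j$ is $C^1$ on $\partial\Omega\times(-\eps,\eps)$. Standard $C^1$-dependence of solutions of linear ordinary differential equations on parameters and initial data — the variational equation $\partial_t(\partial_p\zeta)=\mathcal A\,\partial_p\zeta+(\partial_p\mathcal A)\zeta$ has continuous coefficients and $\tilde Z\in C^1(\partial\Omega)$ — then yields $\zeta\in C^1(\partial\Omega\times(-\eps,\eps))$, and since $\Phi^{-1}$ is $C^1$ we conclude $Z\in C^1(U)$. (The same conclusion can be reached via Lemma \ref{lem:characteristic_regularity}, in the spirit of the proof of Lemma \ref{lem:third_derivatives_exist}.)
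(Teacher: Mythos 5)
Your proposal is correct and follows essentially the same route as the paper: reduce \eqref{eq:good_coordinate_ODE} to a linear ODE system along the integral curves of $X_r$, observe via \eqref{eq:Xr_equals_one} and \eqref{eq:compatible_hypothesis_commutator} that the coefficient $\Hess_\nabla(X_r,E_j)r$ is $C^1$, invoke $C^1$-dependence on parameters and initial data (which is exactly the content of Lemma \ref{lem:characteristic_regularity}), patch by uniqueness, and verify \eqref{eq:tangential_level_curves} by the same $X_r(Zr)\equiv 0$ computation. The only cosmetic difference is that the paper works in local holomorphic coordinate frames while you use a general local frame and spell out the flow parametrization explicitly.
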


\begin{proof}
  Let $\{z_j\}_{j=1}^n$ be a local holomorphic coordinate patch on some neighborhood $\tilde U\subset U$.  If we write $Z=\sum_{j=1}^nf_j\frac{\partial}{\partial z_j}$, then we see that \eqref{eq:good_coordinate_ODE} is equivalent to the system of ordinary differential equations
  \[
    X_r f_j+dz_j\left(\sum_{k=1}^n f_k\nabla_{X_r}\frac{\partial}{\partial z_j}\right)=-\sum_{k=1}^nf_k\left(\Hess_\nabla\left(X_r,\frac{\partial}{\partial z_k}\right)r\right)dz_j(L_r)
  \]
  for each $1\leq j\leq n$.  Note that \eqref{eq:Xr_equals_one} implies that
  \[
    \Hess_\nabla\left(X_r,\frac{\partial}{\partial z_k}\right)r=\left[X_r,\frac{\partial}{\partial z_k}\right]r-\left(\nabla_{X_r}\frac{\partial}{\partial z_k}\right)r,
  \]
  so \eqref{eq:compatible_hypothesis_commutator} implies that this coefficient is in $C^1(U)$ for every $1\leq k\leq n$.  Hence, we may apply Lemma \ref{lem:characteristic_regularity} locally to obtain a local section $Z$ satisfying the necessary hypotheses.  Since $Z$ is uniquely determined by a boundary value problem, we may patch these local sections together to obtain a $C^1$ section $Z$ of $T^{1,0}(U)$.  To see \eqref{eq:tangential_level_curves}, we first note $\partial r(\tilde Z)=0$ on $\partial\Omega$ whenever $\tilde Z\in T^{1,0}(\partial\Omega)$, and \eqref{eq:good_coordinate_ODE} and \eqref{eq:Lr_equals_one} imply
  \[
    X_r(\partial r(Z))=\Hess_\nabla(X_r,Z)r+(\nabla_{X_r} Z)r=0
  \]
  on $U$.  Hence, \eqref{eq:tangential_level_curves} holds by the uniqueness of solutions to this boundary value problem.
\end{proof}

With these preliminaries in place, we will see that the relationship between the Levi-form of a level curve of $\partial\Omega$ and the Levi-form on $\partial\Omega$ is completely characterized by $\beta_r$ and $\alpha_r$.
\begin{lem}
\label{lem:Levi_form_bounds}
  Let $M$ be a Hermitian manifold of complex dimension $n\geq 2$ and let $\Omega\subset M$ be a relatively compact pseudoconvex domain with $C^2$ boundary.  Let $U$ be a neighborhood of $\partial\Omega$ and let $r\in C^2(U)$ be an admissible defining function for $\Omega$.  Let $X_r$ be the unique $C^1$ section of $T(U)$ satisfying \eqref{eq:metric_compatibility}.  Let $\pi:U\rightarrow\partial\Omega$ be the unique $C^1$ projection satisfying $\pi(P)=P$ for all $P\in\partial\Omega$ and $X_r\pi\equiv 0$ on $U$.  For every $\epsilon>0$, there exists $\delta>0$ such that for every $C^1$ section $Z$ of $T^{1,0}(U)$ satisfying \eqref{eq:tangential_level_curves} and \eqref{eq:good_coordinate_ODE} and $z\in U\cap\Omega$ satisfying $0<-r(z)<\delta$, we have
  \begin{multline}
  \label{eq:Levi_form_lower_bound}
    \ddbar r(Z,\bar Z)|_z\geq\\(1-\epsilon)\ddbar r(Z,\bar Z)|_{\pi(z)}+r(z)\left(i\beta_r(Z,\bar Z)-|\alpha_r(Z)|^2\right)|_z-\epsilon|Z|^2|_z(-r(z))
  \end{multline}
  and
  \begin{multline}
  \label{eq:Levi_form_upper_bound}
    \ddbar r(Z,\bar Z)|_z\leq\\(1+\epsilon)\ddbar r(Z,\bar Z)|_{\pi(z)}+r(z)\left(i\beta_r(Z,\bar Z)-|\alpha_r(Z)|^2\right)|_z+\epsilon|Z|^2|_z(-r(z)).
  \end{multline}
\end{lem}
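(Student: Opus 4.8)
\emph{Proof plan.} The strategy is to integrate the $X_r$-derivative of $g:=\ddbar r(Z,\bar Z)=\Hess_\nabla(Z,\bar Z)r$ along the flow of $X_r$ from $\partial\Omega$ inward. Since $r$ is admissible, $X_r$ is compatible with $r$ by Lemma~\ref{lem:signed_distance_function_equivalence}, so $X_r g$ exists and is continuous on $U$ by Lemma~\ref{lem:third_derivatives_exist}. Unwinding the definition of $H^3_\nabla$ and substituting the defining equation \eqref{eq:good_coordinate_ODE} for $\nabla_{X_r}Z$ (and its conjugate for $\nabla_{X_r}\bar Z$), together with \eqref{eq:complex_hessians_equal} and \eqref{eq:alpha_r_defn}, I would obtain
\[
  X_r g = H^3_\nabla(X_r,Z,\bar Z)r - (\Hess_\nabla(X_r,Z)r)\alpha_r(\bar Z) - (\Hess_\nabla(X_r,\bar Z)r)\alpha_r(Z).
\]
The continuous identity \eqref{eq:beta_mixed_defn} (taken with $W=Z$) lets me rewrite $H^3_\nabla(X_r,Z,\bar Z)r$, and after collecting terms
\[
  X_r g = i\beta_r(Z,\bar Z) - |\alpha_r(Z)|^2 + E,
\]
where $E$ is a continuous function on $U$ consisting of the torsion and covariant-derivative remainders from \eqref{eq:beta_mixed_defn}.

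Let $\gamma_z$ be the integral curve of $X_r$ with $\gamma_z(0)=\pi(z)$, so that $r(\gamma_z(t))=t$, $z=\gamma_z(r(z))$, and $\pi$ is constant along $\gamma_z$. Then $t\mapsto g(\gamma_z(t))$ is $C^1$ and
\[
  g(z)-g(\pi(z)) = \int_0^{r(z)}(X_r g)(\gamma_z(t))\,dt
  = r(z)\bigl(i\beta_r(Z,\bar Z)-|\alpha_r(Z)|^2\bigr)\big|_z + \mathcal E_1 + \mathcal E_2 ,
\]
where $\mathcal E_1$ records the variation of $i\beta_r(Z,\bar Z)-|\alpha_r(Z)|^2$ along $\gamma_z$ and $\mathcal E_2=\int_0^{r(z)}E(\gamma_z(t))\,dt$. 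Both $i\beta_r(Z,\bar Z)-|\alpha_r(Z)|^2$ and $E$ are continuous quadratic expressions in $(Z,\bar Z)$ whose coefficients are uniformly continuous on a compact neighborhood of $\partial\Omega$; hence for $\delta$ small, $|\mathcal E_1|\le\epsilon(-r(z))|Z|^2$.

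The crux is controlling $E$ near $\partial\Omega$. At a boundary point $P$, I would run the same algebraic reduction that turns \eqref{eq:beta_mixed_defn} into \eqref{eq:beta_mixed_nullspace} — using \eqref{eq:normal_covariant_derivative}, \eqref{eq:Hessian_symmetry}, and $\bar L_r=2X_r-L_r$ — but now keeping the tangential contributions that the null-space hypothesis suppresses in the proof of Proposition~\ref{prop:beta}. Since $E$ vanishes identically when $Z\in\mathcal N_P(\partial\Omega)$, the surviving algebraic terms must cancel, leaving $E(P)$ equal to an $\mathbb R$-linear combination of $\ddbar r(V,\bar Z)$ and $\ddbar r(Z,\bar V)$ with $V\in T^{1,0}(\partial\Omega)$ built linearly from $Z$ via $\nabla$ and $T_\nabla$, so that $|V|\le C|Z|$. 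Because $\Omega$ is pseudoconvex, $\ddbar r$ is positive semidefinite on $T^{1,0}(\partial\Omega)$, so the Cauchy--Schwarz inequality gives $|E(P)|\le C|Z|\,(\ddbar r(Z,\bar Z)|_P)^{1/2}$ for $P\in\partial\Omega$.

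Finally, continuity of $E$, compactness of $\partial\Omega$, and constancy of $\pi$ along $\gamma_z$ give $|E(\gamma_z(t))|\le C|Z|\,(g(\pi(z)))^{1/2}+\epsilon_0|Z|^2$ for $0<-r(z)<\delta$, so the arithmetic--geometric mean inequality applied to $C|Z|(g(\pi(z)))^{1/2}(-r(z))$, together with $(-r(z))^2<\delta(-r(z))$, yields $|\mathcal E_2|\le\epsilon\,g(\pi(z))+\epsilon(-r(z))|Z|^2$ once $\delta$ and $\epsilon_0$ are small enough. Combining the bounds on $\mathcal E_1$ and $\mathcal E_2$,
\[
  \Bigl|\,g(z)-g(\pi(z))-r(z)\bigl(i\beta_r(Z,\bar Z)-|\alpha_r(Z)|^2\bigr)\big|_z\,\Bigr|
  \le \epsilon\,g(\pi(z)) + \epsilon(-r(z))|Z|^2 ,
\]
and since $g(\pi(z))=\ddbar r(Z,\bar Z)|_{\pi(z)}\ge 0$ by pseudoconvexity, both \eqref{eq:Levi_form_lower_bound} and \eqref{eq:Levi_form_upper_bound} follow after absorbing the fixed constants by shrinking $\epsilon$ at the outset. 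The main obstacle is the boundary identity for $E$: one must isolate, in the derivation of \eqref{eq:beta_mixed_defn}, exactly which pieces turn into tangential Levi-form terms once the null-space hypothesis is dropped, and verify that the residual algebraic expression is identically zero.
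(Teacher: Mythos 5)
Your plan and the paper's proof start identically: both compute $X_r(\ddbar r(Z,\bar Z))=H^3_\nabla(X_r,Z,\bar Z)r-2\re\bigl((\Hess_\nabla(X_r,Z)r)\alpha_r(\bar Z)\bigr)$ from \eqref{eq:good_coordinate_ODE} and integrate along the flow of $X_r$. But from there the strategies diverge. The paper argues by contradiction and compactness: normalize so $|Z_j|\equiv 1$, extract a limiting boundary point $P$ and limiting vector $Z$, observe that the failure of the inequality forces $Z\in\mathcal N_P(\partial\Omega)$, and then invoke \eqref{eq:beta_mixed_nullspace} --- the formula for $\beta_r$ restricted to the null space --- to derive $0\le-\epsilon$. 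This never requires knowing what $X_r(\ddbar r(Z,\bar Z))-\bigl(i\beta_r(Z,\bar Z)-|\alpha_r(Z)|^2\bigr)$ looks like off the null space. You instead try to control that quantity $E$ directly on all of $\partial\Omega$, which is more constructive but requires an explicit algebraic identity for $E$ that you only flag.

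There are two issues with your argument for that identity. First, the reasoning you offer --- ``since $E$ vanishes when $Z\in\mathcal N_P(\partial\Omega)$, the surviving algebraic terms must cancel'' --- is not valid. At a strictly pseudoconvex boundary point $\mathcal N_P(\partial\Omega)=\{0\}$, so vanishing on the null space imposes no constraint whatsoever on $E$ there; you cannot conclude anything about the algebraic piece of $E$ from its restriction to the null space. Second, you leave the computation undone. As it happens, the computation does work out: combining \eqref{eq:beta_mixed_defn} (with $W=Z$) with \eqref{eq:X_Hessian_rho}, splitting $T_\nabla(Z,L_r)$ and $\nabla_Z L_r$ into tangential pieces $V_1$, $V_2$ plus normal multiples of $L_r$, and using \eqref{eq:normal_covariant_derivative}, \eqref{eq:Hessian_symmetry}, \eqref{eq:torsion_free}, \eqref{eq:complex_hessians_equal}, and $\bar L_r=2X_r-L_r$, one finds that the $\alpha_r$ and $\Hess_\nabla(X_r,\cdot)$ contributions cancel exactly, leaving $E=\re\,\ddbar r(V_1-V_2,\bar Z)$ with $V_1-V_2$ tangential and $|V_1-V_2|\le C|Z|$ --- so your Cauchy--Schwarz step is then available on $\partial\Omega$. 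But this cancellation is a general algebraic identity, not a consequence of the null-space property, and it has to be checked by hand. With that done, your AM--GM/continuity finish is fine. So the plan is salvageable, but at the cost of exactly the kind of explicit Levi-form bookkeeping that the paper's compactness argument is designed to avoid; the paper's route trades your delicate error estimate for a clean limiting argument that lands directly on \eqref{eq:beta_mixed_nullspace}.
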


\begin{proof}
  Let $L_r$ be the unique $C^1$ section satisfying \eqref{eq:metric_compatibility_complex}.  Let $Z$ be a $C^1$ section of $T^{1,0}(U)$ satisfying \eqref{eq:tangential_level_curves} and \eqref{eq:good_coordinate_ODE}.  Then \eqref{eq:complex_hessians_equal} can be used to show
  \[
    X_r(\ddbar r(Z,\bar Z))=H^3_\nabla(X_r,Z,\bar Z)r+\ddbar r(\nabla_{X_r} Z,\bar Z)+\ddbar r(Z,\nabla_{X_r}\bar Z),
  \]
  so \eqref{eq:good_coordinate_ODE} implies
  \begin{equation}
  \label{eq:X_Hessian_rho}
    X_r(\ddbar r(Z,\bar Z))=
    H^3_\nabla(X_r,Z,\bar Z)r-2\re\left((\Hess_\nabla(X_r,Z)r)\alpha_r(\bar Z)\right).
  \end{equation}

  For $P\in\partial\Omega$ and $t\in\mathbb{R}$ sufficiently small, let $\psi(P,t)$ denote the unique map satisfying $\psi(P,0)=P$ and $\frac{\partial}{\partial t}\psi(P,t)=X_r$.  For any $z\in U$, we may apply the Fundamental Theorem of Calculus along the integral curves of $X_r$ to obtain
  \begin{equation}
  \label{eq:Levi_form_FTC}
    \ddbar r(Z,\bar Z)|_z-\ddbar r(Z,\bar Z)|_{\pi(z)}=\int_0^{r(z)}X_r(\ddbar r(Z,\bar Z))(\psi(\pi(z),t))dt.
  \end{equation}
  Observe that $X_r(\ddbar r(Z,\bar Z))$ depends only on the first derivatives of $Z$ (more precisely, first derivatives of the coefficients of $Z$ in any holomorphic coordinate system).  Since $r$ is admissible, a standard density result can be combined with Lemma \ref{lem:third_derivatives_exist} to see that $X_r(\ddbar r(Z,\bar Z))$ exists and is continuous on $U$.

  We prove \eqref{eq:Levi_form_lower_bound} by contradiction.  Suppose, on the contrary, that there exists $\epsilon>0$ such that for every $j\in\mathbb{N}$ there exists a $C^1$ section $Z_j$ of $T^{1,0}(U)$ satisfying \eqref{eq:tangential_level_curves} and \eqref{eq:good_coordinate_ODE} and $z_j\in U\cap\Omega$ satisfying $0<-r(z_j)<\frac{1}{j}$ such that \eqref{eq:Levi_form_lower_bound} fails.  Using \eqref{eq:good_coordinate_ODE}, we have
  \[
    X_r|Z_j|^2=2\re\left<\nabla_{X_r} Z_j,Z_j\right>=-2\re\left<(\Hess_\nabla(X_r,Z_j)r)L_r,Z_j\right>,
  \]
  so \eqref{eq:metric_compatibility_complex} and \eqref{eq:tangential_level_curves} imply that $X_r|Z_j|^2\equiv 0$.  Since this means that $|Z_j|^2$ is constant along level curves of $X_r$, $|Z_j||_{z_j}=|Z_j||_{\pi(z_j)}$ for all $j$.  If $Z_j|_{z_j}=Z_j|_{\pi(z_j)}=0$, then \eqref{eq:Levi_form_lower_bound} holds trivially, so these must both be non-trivial.  Hence we may normalize $Z_j$ on a neighborhood of $z_j$ and $\pi(z_j)$ without impacting \eqref{eq:Levi_form_lower_bound}, so we henceforth assume that $|Z_j|\equiv 1$ on a neighborhood of $z_j$ and $\pi(z_j)$.  After restricting to a subsequence, we may assume that $z_j\rightarrow P$ for some $P\in\partial\Omega$ and $Z_j|_{z_j}\rightarrow Z$ for some $Z\in T^{1,0}_P(\partial\Omega)$.  We necessarily have $Z_j|_{\pi(z_j)}\rightarrow Z$ as well.  The failure of \eqref{eq:Levi_form_lower_bound} implies that $\ddbar r(Z,\bar Z)|_P\leq(1-\epsilon)\ddbar r(Z,\bar Z)|_P$, but pseudoconvexity implies $\ddbar r(Z,\bar Z)|_P\geq 0$, so we must have $Z\in\mathcal{N}_P(\partial\Omega)$.

  Since pseudoconvexity implies that $\epsilon\ddbar r(Z_j,\bar Z_j)|_{\pi(z_j)}\geq 0$, the failure of \eqref{eq:Levi_form_lower_bound} at each $z_j$ implies that
  \begin{multline}
  \label{eq:levi_form_difference_quotient}
    \ddbar r(Z_j,\bar Z_j)|_{z_j}-\ddbar r(Z_j,\bar Z_j)|_{\pi(z_j)}\leq\\r(z_j)\left(i\beta_r(Z_j,\bar Z_j)-|\alpha_r(Z_j)|^2\right)|_{z_j}-\epsilon(-r(z_j))
  \end{multline}
  Observe that \eqref{eq:X_Hessian_rho} implies that $X_r(\ddbar r(Z_j,\bar Z_j))|_{\varphi(\pi(z_j),t)}\rightarrow X_r(\ddbar r(Z,\bar Z))|_P$ for all $0>t>r(z_j)$.  If we use \eqref{eq:Levi_form_FTC} to evaluate the left-hand side of \eqref{eq:levi_form_difference_quotient}, divide by $(-r(z_j))$, and take the limit (using continuity of $\alpha_r$ and $\beta_r$), we obtain
  \[
    -X_r(\ddbar r(Z,\bar Z))|_P\leq-\left(i\beta_r(Z,\bar Z)-|\alpha_r(Z)|^2\right)|_{P}-\epsilon.
  \]
  Substituting \eqref{eq:X_Hessian_rho}, we have
  \begin{multline*}
    -H^3_\nabla(X_r,Z,\bar Z)r|_P+2\re\left((\Hess_\nabla(X_r,Z)r)\alpha_r(\bar Z)|_P\right)\leq\\
    -\left(i\beta_r(Z,\bar Z)-|\alpha_r(Z)|^2\right)|_{P}-\epsilon.
  \end{multline*}
  Since $Z\in\mathcal{N}_P(\partial\Omega)$, \eqref{eq:beta_mixed_nullspace} implies $0\leq-\epsilon$, a contradiction.  Hence for every $\epsilon>0$ there exists $\delta>0$ such that \eqref{eq:Levi_form_lower_bound} holds.

  The proof of \eqref{eq:Levi_form_upper_bound} is the same.
\end{proof}

\section{The Diederich Forn{\ae}ss Index: Necessary Conditions}

\label{sec:DF_index_necessary}

Let $M$ be a Hermitian manifold of complex dimension $n\geq 2$ and let $\Omega\subset M$ be a relatively compact pseudoconvex domain with $C^2$ boundary.  Suppose that $\rho$ and $r$ are both $C^2$ defining function for $\Omega$ on some neighborhood $U$ of $\partial\Omega$.  There exists a unique, real-valued function $h\in C^1(U)\cup C^2(U\backslash\partial\Omega)$ such that $\rho=e^{-h}r$.  In fact, $h$ satisfies the stronger condition
\begin{multline}
\label{eq:second_derivative_h_limit}
  \lim_{z\rightarrow\partial\Omega}(-r(z))\Hess_\nabla(Y_1,Y_2)h(z)=0\text{ uniformly on }U\\
  \text{ for all continuous sections }Y_1\text{ and }Y_2\text{ of }T(U).
\end{multline}
We note that \eqref{eq:second_derivative_h_limit} follows from Lemma 4.1 in \cite{Har22}, which easily generalizes to the present context since Lemma 4.1 is essentially a local result.  For $0<\eta<1$, on $U\cap\Omega$ we have
\[
  \ddbar(-(-\rho)^\eta)=\ddbar(-e^{-\eta h}(-r)^\eta)=\partial(\eta e^{-\eta h}(-r)^{\eta-1}\dbar r+\eta e^{-\eta h}(-r)^\eta\dbar h),
\]
so
\begin{multline}
\label{eq:rho_eta_hessian_identity}
  \eta^{-1}(-\rho)^{-\eta}\ddbar(-(-\rho)^\eta)=(-r)^{-1}\ddbar r+(1-\eta)(-r)^{-2}\partial r\wedge\dbar r\\-\eta (-r)^{-1}\partial h\wedge\dbar r
  -\eta (-r)^{-1}\partial r\wedge\dbar h-\eta \partial h\wedge\dbar h+\ddbar h.
\end{multline}
We also have
\begin{equation}
\label{eq:rho_zero_hessian_identity}
  \ddbar(-\log(-\rho))=(-r)^{-1}\ddbar r+(-r)^{-2}\partial r\wedge\dbar r+\ddbar h
\end{equation}
on $U\cap\Omega$.  This can be thought of as the limiting case of \eqref{eq:rho_eta_hessian_identity}, since $\log x=\lim_{\eta\rightarrow 0^+}\frac{x^\eta-1}{\eta}$ for any $x>0$.

\begin{lem}
\label{lem:DF_Index_necessary_condition}
  Let $M$ be a Hermitian manifold of complex dimension $n\geq 2$ and let $\Omega\subset M$ be a relatively compact pseudoconvex domain with $C^2$ boundary such that for some $0\leq\eta<1$ there exists a $C^2$ defining function $\rho$ for $\Omega$ and a constant $C>0$ satisfying \eqref{eq:DF_Index} (for $\eta>0$) or \eqref{eq:Strong_Oka_Property} (for $\eta=0$) on $U\cap\Omega$ for some neighborhood $U$ of $\partial\Omega$.  Let $r\in C^2(U)$ be an admissible defining function for $\Omega$.  Let $X_r$ be the unique $C^1$ section of $T(U)$ satisfying \eqref{eq:metric_compatibility}, and let $\pi:U\rightarrow\partial\Omega$ be the unique $C^1$ projection satisfying $\pi(P)=P$ for all $P\in\partial\Omega$ and $X_r\pi\equiv 0$ on $U$.  Then there exists a real-valued function $h\in C^1(U)\cap C^2(U\backslash\partial\Omega)$ satisfying \eqref{eq:second_derivative_h_limit} with the property that for every $\epsilon>0$, there exists $\delta>0$ such that for every $C^1$ section $Z$ of $T^{1,0}(U)$ satisfying \eqref{eq:tangential_level_curves} and \eqref{eq:good_coordinate_ODE} and $z\in U\cap\Omega$ satisfying $0<-r(z)<\delta$, we have
\begin{multline}
\label{eq:DF_index_necessary_condition}
  (1+\epsilon)(-r(z))^{-1}\ddbar r(Z,\bar Z)|_{\pi(z)}-i\beta_r(Z,\bar Z)|_z+\ddbar h(Z,\bar Z)|_z\\
  \geq\frac{\eta}{1-\eta}\abs{\partial h(Z)-\alpha_r(Z)}^2|_z+\left(C-\epsilon\right)|Z|^2|_z.
\end{multline}
\end{lem}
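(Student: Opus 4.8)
The plan is to take $h$ to be the unique real-valued function with $\rho=e^{-h}r$; it lies in $C^1(U)\cap C^2(U\setminus\partial\Omega)$ and satisfies \eqref{eq:second_derivative_h_limit} by Lemma~4.1 of \cite{Har22}, as recalled before the statement. Evaluating \eqref{eq:rho_eta_hessian_identity} (for $\eta>0$) or \eqref{eq:rho_zero_hessian_identity} (for $\eta=0$) on a pair $(W,\bar W)$ and invoking \eqref{eq:DF_Index} (resp.\ \eqref{eq:Strong_Oka_Property}) gives, on $U\cap\Omega$ and for every $W\in T^{1,0}$,
\[
(-r)^{-1}\ddbar r(W,\bar W)+(-r)^{-2}\abs{\partial r(W)}^2-\eta\abs{\partial h(W)+(-r)^{-1}\partial r(W)}^2+\ddbar h(W,\bar W)\ \ge\ C\abs{W}^2 .
\]

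Next I would fix $z\in U\cap\Omega$ with $-r(z)$ small and apply this inequality, at the point $z$, to the vector $W=Z-(-r(z))\,t\,L_r\in T^{1,0}_z$, where $L_r$ is the section from \eqref{eq:metric_compatibility_complex} and $t\in\mathbb{C}$ is a free parameter. Using $\partial r(Z)=0$, $\partial r(L_r)=1$, $\left<Z,L_r\right>=0$ and $\ddbar r(Z,\bar L_r)=\alpha_r(Z)$ (by \eqref{eq:tangential_level_curves}, \eqref{eq:Lr_equals_one}, \eqref{eq:metric_compatibility_complex} and \eqref{eq:alpha_r_defn}), one computes $\partial r(W)=-(-r)t$, $(-r)^{-1}\ddbar r(W,\bar W)=(-r)^{-1}\ddbar r(Z,\bar Z)-2\re(\bar t\,\alpha_r(Z))+O(-r)$, $\partial h(W)+(-r)^{-1}\partial r(W)=\partial h(Z)-t+O(-r)$, $\ddbar h(W,\bar W)=\ddbar h(Z,\bar Z)+o(1)$ and $\abs{W}^2=\abs{Z}^2+O((-r)^2)$, where the $o(1)$ is supplied by \eqref{eq:second_derivative_h_limit}; after normalizing $\abs{Z}=1$ (everything is homogeneous of degree two in $Z$) these error terms are uniform in $z$, in $Z$, and in $t$ ranging over any fixed bounded set. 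Hence for each $\epsilon>0$ there is $\delta_1>0$ such that $0<-r(z)<\delta_1$ forces
\[
(-r)^{-1}\ddbar r(Z,\bar Z)|_z+\ddbar h(Z,\bar Z)|_z+Q_z(t)\ \ge\ (C-\epsilon)\abs{Z}^2|_z ,\qquad Q_z(t):=\abs{t}^2-2\re(\bar t\,\alpha_r(Z))-\eta\abs{\partial h(Z)-t}^2 ,
\]
for all such $t$. Since $Q_z(t)=(1-\eta)\abs{t}^2+2\re\big(\overline{\eta\partial h(Z)-\alpha_r(Z)}\,t\big)-\eta\abs{\partial h(Z)}^2$, taking $t=t_z:=(\alpha_r(Z)-\eta\partial h(Z))/(1-\eta)$ minimizes it, with $Q_z(t_z)=-\eta\abs{\partial h(Z)}^2-(1-\eta)^{-1}\abs{\eta\partial h(Z)-\alpha_r(Z)}^2$, and the elementary identity $\eta\abs{a}^2+(1-\eta)^{-1}\abs{\eta a-b}^2=\tfrac{\eta}{1-\eta}\abs{a-b}^2+\abs{b}^2$ (with $a=\partial h(Z)$, $b=\alpha_r(Z)$) turns the previous display into
\[
(-r)^{-1}\ddbar r(Z,\bar Z)|_z+\ddbar h(Z,\bar Z)|_z\ \ge\ \tfrac{\eta}{1-\eta}\abs{\partial h(Z)-\alpha_r(Z)}^2|_z+\abs{\alpha_r(Z)}^2|_z+(C-\epsilon)\abs{Z}^2|_z .
\]

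Finally I would bring in the level-set geometry. Dividing \eqref{eq:Levi_form_upper_bound} by $-r(z)>0$ produces $\delta_2>0$ with
\[
(-r)^{-1}\ddbar r(Z,\bar Z)|_z\ \le\ (1+\epsilon)(-r)^{-1}\ddbar r(Z,\bar Z)|_{\pi(z)}-i\beta_r(Z,\bar Z)|_z+\abs{\alpha_r(Z)}^2|_z+\epsilon\abs{Z}^2|_z
\]
whenever $0<-r(z)<\delta_2$ and $Z$ satisfies \eqref{eq:tangential_level_curves} and \eqref{eq:good_coordinate_ODE}. The last display of the previous paragraph rearranges to a lower bound for $(-r)^{-1}\ddbar r(Z,\bar Z)|_z$; comparing it with this upper bound and cancelling the common term $\abs{\alpha_r(Z)}^2|_z$ gives \eqref{eq:DF_index_necessary_condition} with $C-2\epsilon$ in place of $C-\epsilon$ and $\delta=\min(\delta_1,\delta_2)$, so running the argument with $\epsilon/2$ finishes the proof.

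I expect the second step to be the crux. Taking simply $W=Z$ in the reformulated inequality only yields $\eta\abs{\partial h(Z)}^2$ on the right, which is strictly weaker than the required $\tfrac{\eta}{1-\eta}\abs{\partial h(Z)-\alpha_r(Z)}^2$; the point of perturbing in the complex normal direction and optimizing over $t$ is that the optimal choice produces exactly the surplus $\abs{\alpha_r(Z)}^2$ needed to absorb the Levi-form rate-of-change term $\abs{\alpha_r(Z)}^2$ appearing in \eqref{eq:Levi_form_upper_bound}. A secondary technical point is keeping the perturbation errors uniformly small, which is precisely the role of \eqref{eq:second_derivative_h_limit}.
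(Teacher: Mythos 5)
Your proposal is correct and follows essentially the same route as the paper: you take $h$ with $\rho=e^{-h}r$, expand the hypothesis via \eqref{eq:rho_eta_hessian_identity} (or \eqref{eq:rho_zero_hessian_identity}), perturb $Z$ in the complex-normal direction, and close with \eqref{eq:Levi_form_upper_bound}. The only (purely presentational) difference is that the paper plugs in the perturbation $\tilde Z = Z - \tfrac{(-r)}{1-\eta}(\alpha_r(Z)-\eta\partial h(Z))L_r$ without comment, whereas you parametrize by $t$ and optimize the resulting quadratic, rediscovering this coefficient as the minimizer $t_z$ — a nice way of motivating the paper's choice.
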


\begin{proof}
Let $h\in C^1(U)\cup C^2(U\backslash\partial\Omega)$ be the unique real-valued function such that $\rho=e^{-h}r$.  By \eqref{eq:rho_eta_hessian_identity} (resp. \eqref{eq:rho_zero_hessian_identity}), \eqref{eq:DF_Index} (resp. \eqref{eq:Strong_Oka_Property}) on $\Omega\cap U$ is equivalent to
\begin{multline}
\label{eq:hessian_inequality_expanded}
  i\ddbar r+i(1-\eta)(-r)^{-1}\partial r\wedge\dbar r-i\eta\partial h\wedge\dbar r-i\eta\partial r\wedge\dbar h-i\eta(-r)\partial h\wedge\dbar h+i(-r)\ddbar h\geq\\
  C(-r)\omega.
\end{multline}

Let $L_r$ be the unique $C^1$ section of $T^{1,0}(U)$ satisfying \eqref{eq:metric_compatibility_complex}.  By \eqref{eq:metric_compatibility_complex} and \eqref{eq:tangential_level_curves}, $\left<L_r,Z\right>\equiv 0$ on $U$.  When $Z\neq 0$, set
\[
  \tilde Z=Z-\frac{(-r)}{1-\eta}\left(\alpha_r(Z)-\eta\partial h(Z)\right)L_r.
\]
Since $\left<Z,L_r\right>\equiv 0$, we have
\[
  \abs{|\tilde Z|^2-|Z|^2}\leq O((-r)^2).
\]
By \eqref{eq:Lr_equals_one} and \eqref{eq:tangential_level_curves}, we have
\[
  \partial r(\tilde Z)=-\frac{(-r)}{1-\eta}\left(\alpha_r(Z)-\eta\partial h(Z)\right).
\]
Using \eqref{eq:alpha_r_defn}, we have
\[
  \abs{\ddbar r(\tilde Z,\overline{\tilde Z})-\ddbar r(Z,\bar Z)+\frac{2(-r)}{1-\eta}\re\left(\abs{\alpha_r(Z)}^2-\eta\partial h(Z)\alpha_r(\bar Z)\right)}
  \leq O((-r)^2).
\]
Finally, we consider derivatives of $h$.  We have
\[
  |\partial h(\tilde Z)-\partial h(Z)|\leq O(-r),
\]
and \eqref{eq:second_derivative_h_limit} with \eqref{eq:complex_hessians_equal} implies that
\[
  \lim_{z\rightarrow\partial\Omega}\abs{\ddbar h(\tilde Z,\overline{\tilde Z})-\ddbar h(Z,\bar Z)}(z)=0\text{ uniformly on }U.
\]
If we apply \eqref{eq:hessian_inequality_expanded} to the pair $(\tilde Z,\overline{\tilde Z})$, we may combine this with the above inequalities to obtain
\begin{multline*}
  \ddbar r(Z,\bar Z)-\frac{\eta(-r)}{1-\eta}\abs{\partial h(Z)-\alpha_r(Z)}^2\\
  +(-r)\ddbar h(Z,\bar Z)-(-r)\abs{\alpha_r(Z)}^2\geq C(-r)|Z|^2-|Z|^2o(-r)\text{ on }\Omega\cap U.
\end{multline*}
If we combine this with \eqref{eq:Levi_form_upper_bound} from Lemma \ref{lem:Levi_form_bounds}, we obtain \eqref{eq:DF_index_necessary_condition}.
\end{proof}

\begin{prop}
\label{prop:DF_Index_necessary_condition_boundary}
  Let $M$ be a Hermitian manifold of complex dimension $n\geq 2$ and let $\Omega\subset M$ be a relatively compact pseudoconvex domain with $C^2$ boundary such that for some $0\leq\eta<1$ there exists a $C^2$ defining function $\rho$ for $\Omega$ and a constant $C>0$ such that \eqref{eq:DF_Index} (for $\eta>0$) or \eqref{eq:Strong_Oka_Property} (for $\eta=0$) holds.  Let $U$ be a neighborhood of $\partial\Omega$ and let $r\in C^2(U)$ be an admissible defining function for $\Omega$.  Then for every $0<\tilde C<C$ there exists a real-valued function $h\in C^2(U)$ satisfying
  \begin{equation}
  \label{eq:DF_Index_necessary_condition_boundary}
    -i\beta_r(Z,\bar Z)+\ddbar h(Z,\bar Z)\geq\frac{\eta}{1-\eta}\abs{\partial h(Z)-\alpha_r(Z)}^2+\tilde C|Z|^2
  \end{equation}
  on $\partial\Omega$ for all $C^1$ sections $Z$ of $T^{1,0}(\partial\Omega)$.
\end{prop}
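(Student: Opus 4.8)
The plan is to transport the interior estimate of Lemma~\ref{lem:DF_Index_necessary_condition} to $\partial\Omega$ by a limiting argument along the integral curves of $X_r$, producing the desired inequality on the null-space of the Levi form, and then to repair the two remaining defects: the object so obtained comes from a function that is only $C^1$ up to $\partial\Omega$, and it is a priori controlled only on $\mathcal{N}_P(\partial\Omega)$.

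First I would apply Lemma~\ref{lem:DF_Index_necessary_condition} to the given $\rho$, obtaining $h_0\in C^1(U)\cap C^2(U\setminus\partial\Omega)$ with \eqref{eq:second_derivative_h_limit} and, for each $\epsilon>0$, the pointwise estimate \eqref{eq:DF_index_necessary_condition} at all interior $z$ with $0<-r(z)<\delta(\epsilon)$, valid for every section obeying \eqref{eq:tangential_level_curves} and \eqref{eq:good_coordinate_ODE}. Fix $\tilde C<\tilde C'<C$. Given $P\in\partial\Omega$ and $Z_0\in\mathcal{N}_P(\partial\Omega)$, extend $Z_0$ to such a section $Z$ via the construction preceding Lemma~\ref{lem:Levi_form_bounds}, and restrict \eqref{eq:DF_index_necessary_condition} to $z=\psi(P,t)$ on the $X_r$-integral curve through $P$ (with $\psi$, $\pi$ as in Lemma~\ref{lem:DF_Index_necessary_condition}). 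Since $\pi$ is constant along this curve and $\ddbar r(Z_0,\bar Z_0)|_P=0$, the singular first term of \eqref{eq:DF_index_necessary_condition} vanishes identically on the curve; letting $t\to0^-$, all remaining terms except $\ddbar h_0(Z,\bar Z)$ converge (by continuity of $\alpha_r$, $\beta_r$ and $h_0\in C^1(U)$, and since $|Z|^2$ is constant along the curve), so
\[
  \liminf_{t\to0^-}\ddbar h_0(Z,\bar Z)|_{\psi(P,t)}\geq i\beta_r(Z_0,\bar Z_0)|_P+\frac{\eta}{1-\eta}\abs{\partial h_0(Z_0)-\alpha_r(Z_0)}^2|_P+C|Z_0|^2 .
\]
In other words, $h_0|_{\partial\Omega}$ is a weak subsolution, with the strictly larger constant $C$, of the inequality obtained from \eqref{eq:DF_Index_necessary_condition_boundary} by restricting $Z$ to null directions; note that when $Z\in\mathcal{N}_P(\partial\Omega)$ the quantities $\ddbar h(Z,\bar Z)|_P$ and $\partial h(Z)|_P$ depend only on $h|_{\partial\Omega}$, because $[Z,\bar Z]|_P$ is then complex tangential and $r\equiv0$ on $\partial\Omega$. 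I would then regularize $h_0|_{\partial\Omega}$ on the compact $C^2$-manifold $\partial\Omega$ to obtain $g\in C^\infty(\partial\Omega)$ with $\norm{g-h_0|_{\partial\Omega}}_{C^1(\partial\Omega)}$ arbitrarily small and
\[
  -i\beta_r(Z,\bar Z)+\ddbar g(Z,\bar Z)\geq\frac{\eta}{1-\eta}\abs{\partial g(Z)-\alpha_r(Z)}^2+\tilde C'|Z|^2
\]
for all $P\in\partial\Omega$ and $Z\in\mathcal{N}_P(\partial\Omega)$, the slack $C-\tilde C'>0$ absorbing the regularization errors (and \eqref{eq:second_derivative_h_limit} keeping the second-order behaviour of $h_0$ near $\partial\Omega$ harmless).

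To pass from this null-space estimate to the conclusion, extend $g$ to some $h_1\in C^2(U)$ and set $h=h_1+Ar\in C^2(U)$, so $h|_{\partial\Omega}=g$. Since $\partial r$ annihilates $T^{1,0}(\partial\Omega)$ and $r\equiv0$ on $\partial\Omega$, for tangential $Z$ at $P$ the inequality \eqref{eq:DF_Index_necessary_condition_boundary} for $h$ is exactly $Q_0(Z)+A\,\ddbar r(Z,\bar Z)\geq0$, where $Q_0$ is the continuous Hermitian form $-i\beta_r(Z,\bar Z)+\ddbar h_1(Z,\bar Z)-\frac{\eta}{1-\eta}\abs{\partial h_1(Z)-\alpha_r(Z)}^2-\tilde C|Z|^2$ on $T^{1,0}(\partial\Omega)$. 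On $\mathcal{N}_P(\partial\Omega)$ the values of $\ddbar h_1$ and $\partial h_1$ coincide with those of $g$, so $Q_0\geq(\tilde C'-\tilde C)|Z|^2>0$ there, while $\ddbar r(Z,\bar Z)\geq0$ with equality precisely when $Z\in\mathcal{N}_P(\partial\Omega)$. A compactness argument on the unit sphere bundle of $T^{1,0}(\partial\Omega)$ then gives $Q_0+A\,\ddbar r\geq0$ on all of $T^{1,0}(\partial\Omega)$, uniformly, once $A$ is large: if it failed for $A_j\to\infty$ at unit vectors $Z_j$, then $\ddbar r(Z_j,\bar Z_j)\to0$, a subsequential limit $Z_*$ would lie in $\mathcal{N}_{P_*}(\partial\Omega)$, and passing to the limit would give $0\geq Q_0(Z_*)\geq\tilde C'-\tilde C>0$. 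Thus $h=h_1+Ar$ satisfies \eqref{eq:DF_Index_necessary_condition_boundary} with constant $\tilde C$ for all $C^1$ sections $Z$ of $T^{1,0}(\partial\Omega)$.

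The hard part is the regularization producing $g$. Because $\mathcal{N}_P(\partial\Omega)$ carries no smooth structure and may jump in dimension, one cannot simply invoke a mollification lemma for subsolutions of a degenerate-elliptic equation; the argument has to quantify how the second-order part of a mollification of $h_0|_{\partial\Omega}$ compares, along null directions, with the weak $\liminf$ bound above, while keeping first derivatives under control, and it is precisely here that \eqref{eq:second_derivative_h_limit} and the room $\tilde C<C$ are needed. The remaining steps — the application of Lemma~\ref{lem:DF_Index_necessary_condition}, the limit along the curves of $X_r$, and the perturbation by $Ar$ — are routine given the results already in place.
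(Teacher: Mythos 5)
Your overall plan is close to the paper's in spirit — transport the interior estimate of Lemma~\ref{lem:DF_Index_necessary_condition} to the boundary along the $X_r$-integral curves, then clean up — and your closing step (extend a boundary null-space estimate to all of $T^{1,0}(\partial\Omega)$ by adding $Ar$ and running a compactness argument on the unit sphere bundle) is a correct and genuinely different route from the paper's, which never drops to the null space: the paper carries a distributional inequality valid for \emph{every} tangential $Z$ all the way through, with a negative helper term proportional to $\ddbar r(Z,\bar Z)$ that vanishes on the null space, and then recovers \eqref{eq:DF_Index_necessary_condition_boundary} for all $Z$ directly via the identity \eqref{eq:boundary_hessian_versus_hessian} and a Whitney extension. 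Your $+Ar$ trick is a clean alternative to that last piece.

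However, the step you yourself flag as ``the hard part'' is a genuine gap, and it is not a minor technicality — it is precisely the content of the paper's proof. You pass from the pointwise, one-sided directional bound
\[
  \liminf_{t\to0^-}\ddbar h_0(Z,\bar Z)\big|_{\psi(P,t)}\geq i\beta_r(Z,\bar Z)\big|_P+\tfrac{\eta}{1-\eta}\abs{\partial h_0(Z)-\alpha_r(Z)}^2\big|_P+C|Z|^2
\]
to the assertion that a mollification $g$ of $h_0|_{\partial\Omega}$ satisfies the null-space estimate with a slightly smaller constant, absorbing the ``regularization errors.'' But $h_0$ is only $C^1$ up to $\partial\Omega$, and a one-sided $\liminf$ of a second derivative along the normal flow is not, by itself, a statement about the tangential distributional Hessian of $h_0|_{\partial\Omega}$ on $\partial\Omega$; in particular there is no a priori reason that $\ddbar g(Z,\bar Z)$ at a point $P$ is comparable to this $\liminf$, since the mollifier averages over a neighborhood of $P$ where $Z$ is no longer null and where the singular term $-(1+\epsilon)(-r)^{-1}\ddbar r(Z,\bar Z)$ reappears with the wrong sign. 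This is why the paper does \emph{not} restrict to null directions before regularizing. Instead it: (i) pairs $\Hess_\nabla^b(Z,\bar Z)h$ against test functions $\chi\in C^\infty_0(\partial\Omega)$ and pushes the interior estimate to $\partial\Omega$ using the variational equations \eqref{eq:varphi_variational_initial_condition}--\eqref{eq:varphi_variational_equation} and \eqref{eq:second_derivative_h_limit}, so that the inequality for a fixed small $t<0$ becomes a bona fide distributional inequality on $\partial\Omega$, valid for \emph{all} tangential $Z$ and carrying the extra term $-((1+\epsilon)(-t)^{-1}+X_rh)\,\ddbar r(Z,\bar Z)$; (ii) mollifies the two pieces of boundary data $h|_{\partial\Omega}$ and $\bigl((1+\epsilon)(-t)^{-1}+X_rh\bigr)\big|_{\partial\Omega}$ separately, using Lemma~\ref{lem:regularization}(5) to control the second-order error; and (iii) glues them into a single $h_\epsilon\in C^2(U)$ via the Whitney Lemma~\ref{lem:Whitney}, at which point \eqref{eq:boundary_hessian_versus_hessian} converts $\Hess^b_\nabla h_\epsilon$ back into $\ddbar h_\epsilon$ and yields \eqref{eq:DF_Index_necessary_condition_boundary} for all $Z$ at once. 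In short: the route through the null-space-only estimate makes the regularization genuinely harder, because ``an inequality on $\mathcal{N}_P(\partial\Omega)$ in the distribution sense'' is not a mollification-stable notion; the fix is to keep the full-tangent-space inequality (with its $\ddbar r$-weighted first-order helper term) through the regularization, exactly as the paper does. Your $+Ar$ finish would then become optional.
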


\begin{proof}
  Fix $0<\tilde C<C$, and let $\epsilon=\frac{C-\tilde C}{3}$.  Then $0<\epsilon<\frac{C}{3}$.  Let $X_r$ be the unique $C^1$ section of $T(U)$ satisfying \eqref{eq:metric_compatibility}, and let $L_r$ be the unique $C^1$ section of $T^{1,0}(U)$ satisfying \eqref{eq:metric_compatibility_complex}.  Let $\pi:U\rightarrow\partial\Omega$ be the unique $C^1$ projection satisfying $\pi(P)=P$ for all $P\in\partial\Omega$ and $X\pi\equiv 0$ on $U$. Let $\psi(z,t)$ denote the flow of the vector field $X_r$, so that \eqref{eq:Xr_equals_one} implies $r(\psi(P,t))=t$ and $\pi(\psi(P,t))=z$ for $P\in\partial\Omega$ and $t$ sufficiently small.  Let $h\in C^1(U)\cap C^2(U\backslash\partial\Omega)$ and $\delta>0$ be given by Lemma \ref{lem:DF_Index_necessary_condition}.

  Let $Z$ be a $C^1$ section of $T^{1,0}(U)$ satisfying \eqref{eq:tangential_level_curves} and \eqref{eq:good_coordinate_ODE}.  Let $\nabla_Z^b\bar Z=\nabla_Z\bar Z-\dbar r(\nabla_Z\bar Z)X_r$, so that \eqref{eq:Xr_equals_one} implies $(\nabla_Z^b\bar Z) r\equiv 0$ on $U$ and $\nabla_Z^b\bar Z|_{\partial\Omega}\in T^{1,0}(\partial\Omega)$.  Let $\Hess_\nabla^b(Z,\bar Z)=Z\bar Z-\nabla_Z^b\bar Z$.  By \eqref{eq:tangential_level_curves}, we have
  \[
    \nabla_Z^b\bar Z-\nabla_Z\bar Z=(\Hess_\nabla(Z,\bar Z)r)X_r,
  \]
  so \eqref{eq:complex_hessians_equal} implies that
  \begin{equation}
  \label{eq:boundary_hessian_versus_hessian}
    \Hess_\nabla^b(Z,\bar Z)h-\ddbar h(Z,\bar Z)=-(\ddbar r(Z,\bar Z))X_rh\text{ on }U\backslash\partial\Omega.
  \end{equation}

  For any $\chi\in C^\infty_0(\partial\Omega)$, we define the distribution pairing $\left<\Hess_\nabla^b(Z,\bar Z)h,\chi\right>$ on $\partial\Omega$ by
  \[
    \left<\Hess_\nabla^b(Z,\bar Z)h,\chi\right>=\int_{\partial\Omega}(\bar Z h)(\bar Z^*\bar\chi) d\sigma-\int_{\partial\Omega}((\nabla_Z^b\bar Z) h)\bar\chi d\sigma,
  \]
  where $Z^*$ denotes the adjoint of $Z$ with respect to the $L^2$ inner product on $\partial\Omega$ and $d\sigma$ denotes the induced surface measure on $\partial\Omega$.

  Suppose that $\chi\in C^\infty_0(\partial\Omega)$ is non-negative valued.  For $t<0$ sufficiently small, we have
  \begin{multline*}
    \left<\Hess_\nabla^b(Z,\bar Z)h,\chi\right>\geq\int_{\partial\Omega}(\bar Z h)(\psi(P,t))(\bar Z^*\bar\chi)(P) d\sigma_P\\
    -\int_{\partial\Omega}((\nabla_Z^b\bar Z) h)(\psi(P,t))\bar\chi(P) d\sigma_P-o(1),
  \end{multline*}
  where $o(1)$ represents an error term that vanishes uniformly as $t\rightarrow 0^-$.  Since $h\in C^2(U\backslash\partial\Omega)$, we may now integrate by parts in the first integral to obtain
  \begin{multline*}
    \left<\Hess_\nabla^b(Z,\bar Z)h,\chi\right>\geq\int_{\partial\Omega}Z|_P((\bar Z h)(\psi(P,t)))\bar\chi(P) d\sigma_P\\
    -\int_{\partial\Omega}((\nabla_Z^b\bar Z) h)(\psi(P,t))\bar\chi(P) d\sigma_P-o(1),
  \end{multline*}
  Since $\psi$ satisfies the variational equations \eqref{eq:varphi_variational_initial_condition} and \eqref{eq:varphi_variational_equation}, if we write
  \[
    Z=\sum_{j=1}^n b^j(z)\frac{\partial}{\partial z_j}
  \]
  in local holomorphic coordinates, then we must have
  \[
    \abs{(Z|_P)\psi^j(P,t)-b^j(\psi(P,t))}\leq \norm{X_r}_{C^1(U)}\norm{\psi}_{C^1(U)}O(t)
  \]
  and
  \[
    \abs{(Z|_P)\bar\psi^j(P,t)}\leq \norm{X_r}_{C^1(U)}\norm{\psi}_{C^1(U)}O(t)
  \]
  for all $1\leq j\leq n$, where $O(t)$ represents a term that is bounded by $Ct$ for some constant $C>0$ independent of $t$ and all $t$ sufficiently small.  Hence \eqref{eq:second_derivative_h_limit} gives us
  \begin{multline*}
    \left<\Hess_\nabla^b(Z,\bar Z)h,\chi\right>\geq\int_{\partial\Omega}((Z\bar Z h)(\psi(P,t)))\bar\chi(P) d\sigma_P\\
    -\int_{\partial\Omega}((\nabla_Z^b\bar Z) h)(\psi(P,t))\bar\chi(P) d\sigma_P-o(1).
  \end{multline*}
  Now we may substitute \eqref{eq:boundary_hessian_versus_hessian} to obtain
  \begin{multline*}
    \left<\Hess_\nabla^b(Z,\bar Z)h,\chi\right>\geq\int_{\partial\Omega}((\ddbar h(Z,\bar Z))(\psi(P,t)))\bar\chi(P) d\sigma_P\\
    -\int_{\partial\Omega}((\ddbar r(Z,\bar Z))X_rh)(\psi(P,t))\bar\chi(P) d\sigma_P-o(1).
  \end{multline*}
  We may assume that $0<-t<\delta$ so that we may substitute \eqref{eq:DF_index_necessary_condition} and obtain
  \begin{multline*}
    \left<\Hess_\nabla^b(Z,\bar Z)h,\chi\right>\geq\int_{\partial\Omega}((C-\epsilon)|Z|^2(\psi(P,t)))\bar\chi(P) d\sigma_P\\
    +\int_{\partial\Omega}\left(\left(i\beta_r(Z,\bar Z)+\frac{\eta}{1-\eta}\abs{\partial h(Z)-\alpha_r(Z)}^2\right)(\psi(P,t))\right)\bar\chi(P) d\sigma_P\\
    -(1+\epsilon)\int_{\partial\Omega}(-t)^{-1}(\ddbar r(Z,\bar Z))(P)\bar\chi(P) d\sigma_P\\
    -\int_{\partial\Omega}((\ddbar r(Z,\bar Z))X_r h)(\psi(P,t))\bar\chi(P) d\sigma_P-o(1).
  \end{multline*}
  Since all of the terms evaluated at $\psi(P,t)$ are continuous on $U$, we have shown that
  \begin{multline*}
    \Hess_\nabla^b(Z,\bar Z)h\geq(C-\epsilon)|Z|^2
    +i\beta_r(Z,\bar Z)+\frac{\eta}{1-\eta}\abs{\partial h(Z)-\alpha_r(Z)}^2\\
    -((1+\epsilon)(-t)^{-1}+X_rh)(\ddbar r(Z,\bar Z))-o(1)
  \end{multline*}
  in the distribution sense on $\partial\Omega$ for all $t<0$ sufficiently small.  Now, we fix $t$ sufficiently small so that
  \begin{multline*}
    \Hess_\nabla^b(Z,\bar Z)h\geq(C-2\epsilon)|Z|^2
    +i\beta_r(Z,\bar Z)+\frac{\eta}{1-\eta}\abs{\partial h(Z)-\alpha_r(Z)}^2\\
    -((1+\epsilon)(-t)^{-1}+X_rh)(\ddbar r(Z,\bar Z))
  \end{multline*}
  in the distribution sense on $\partial\Omega$ and regularize $h|_{\partial\Omega}$ and $((1+\epsilon)(-t)^{-1}+X_rh)|_{\partial\Omega}$ by convolution (as in Lemma \ref{lem:regularization} with $M$ replaced by $\partial\Omega$) to obtain new functions $h_0\in C^2(\partial\Omega)$ and $h_1\in C^1(\partial\Omega)$ satisfying
  \begin{multline*}
    \Hess_\nabla^b(Z,\bar Z)h_0\geq(C-3\epsilon)|Z|^2
    +i\beta_r(Z,\bar Z)+\frac{\eta}{1-\eta}\abs{\partial h_0(Z)-\alpha_r(Z)}^2\\
    -(\ddbar r(Z,\bar Z))h_1
  \end{multline*}
  on $\partial\Omega$.  Now we use Lemma \ref{lem:Whitney} to obtain a new function $h_\epsilon\in C^2(U)$ satisfying $h_\epsilon|_{\partial\Omega}=h_0$, $X_rh_\epsilon|_{\partial\Omega}=h_1$, and $X_r^2 h_\epsilon|_{\partial\Omega}=0$.  By \eqref{eq:boundary_hessian_versus_hessian}, if we relabel $h_\epsilon$ with $h$, we have \eqref{eq:DF_Index_necessary_condition_boundary}.

\end{proof}

\section{The Diederich Forn{\ae}ss Index: Sufficient Conditions}

\label{sec:DF_index_sufficient}

We now prove the converse to Lemma \ref{lem:DF_Index_necessary_condition}.
\begin{lem}
\label{lem:DF_Index_sufficient_condition}
  Let $M$ be a Hermitian manifold of complex dimension $n\geq 2$ and let $\Omega\subset M$ be a relatively compact pseudoconvex domain with $C^2$ boundary.  Let $U$ be a neighborhood of $\partial\Omega$ and let $r\in C^2(U)$ be an admissible defining function for $\Omega$.  Let $X_r$ be the unique $C^1$ section of $T(U)$ satisfying \eqref{eq:metric_compatibility}.  Let $\pi:U\rightarrow\partial\Omega$ be the unique $C^1$ projection satisfying $\pi(P)=P$ for all $P\in\partial\Omega$ and $X_r\pi\equiv 0$ on $U$.  Suppose that for some $0\leq\eta<1$ there exists a real-valued function $h\in C^1(U)\cap C^2(U\backslash\partial\Omega)$ satisfying \eqref{eq:second_derivative_h_limit} and constants $C>0$ and $0<\tilde\epsilon<1$ with the properties that for every $0<\epsilon<\tilde\epsilon$, there exists $\delta>0$ such that for every $C^1$ section $Z$ of $T^{1,0}(U)$ satisfying \eqref{eq:tangential_level_curves} and \eqref{eq:good_coordinate_ODE} and $z\in U\cap\Omega$ satisfying $0<-r(z)<\delta$, we have
  \begin{multline}
  \label{eq:DF_index_sufficient_condition}
    (1-\epsilon)(-r(z))^{-1}\ddbar r(Z,\bar Z)|_{\pi(z)}-i\beta_r(Z,\bar Z)|_z+\ddbar h(Z,\bar Z)|_z\\
    \geq\frac{\eta}{1-\eta}\abs{\partial h(Z)-\alpha_r(Z)}^2|_z+\left(C+2\epsilon\right)|Z|^2|_z.
  \end{multline}
  Then there exists a $C^2$ defining function $\rho$ for $\Omega$ such that \eqref{eq:DF_Index} (when $\eta>0$) or \eqref{eq:Strong_Oka_Property} (when $\eta=0$) holds on some interior neighborhood of $\partial\Omega$.  If $\Omega$ is Stein, then $\rho$ can be chosen so that \eqref{eq:DF_Index} (when $\eta>0$) or \eqref{eq:Strong_Oka_Property} (when $\eta=0$) holds on all of $\Omega$.
\end{lem}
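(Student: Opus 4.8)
The plan is to show that $\rho := e^{-h}r$ is the desired defining function: first that it is $C^2$, then that it realizes \eqref{eq:DF_Index} (or \eqref{eq:Strong_Oka_Property}) on an interior neighborhood of $\partial\Omega$, and finally --- when $\Omega$ is Stein --- that it can be modified in the interior to realize the estimate on all of $\Omega$. That $\rho$ is a $C^2$ defining function is immediate: the only contribution to its second derivatives that is not manifestly continuous up to $\partial\Omega$ is the term proportional to $r\,\Hess_\nabla h$, which vanishes at the boundary by \eqref{eq:second_derivative_h_limit}, and $d\rho = e^{-h}\,dr \neq 0$ on $\partial\Omega$. On $U\cap\Omega$, where $h$ is $C^2$, the identity \eqref{eq:rho_eta_hessian_identity} (or \eqref{eq:rho_zero_hessian_identity} when $\eta = 0$) expresses $\eta^{-1}(-\rho)^{-\eta}i\ddbar(-(-\rho)^\eta)$ (resp.\ $i\ddbar(-\log(-\rho))$) as a Hermitian form built from $\ddbar r$, $\partial r$, $\partial h$, and $\ddbar h$, and the task is to bound this form below by a positive multiple of $\abs{\cdot}^2$ on an interior collar.

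To that end, fix $z$ near $\partial\Omega$ and $W\in T^{1,0}_z(M)$ and write $W = Z + aL_r$ with $a = \partial r(W)$, so $\partial r(Z) = 0$; since $Z$ extends to a $C^1$ section satisfying \eqref{eq:tangential_level_curves} and \eqref{eq:good_coordinate_ODE}, both the hypothesis \eqref{eq:DF_index_sufficient_condition} and Lemma \ref{lem:Levi_form_bounds} apply to it. Dividing \eqref{eq:Levi_form_lower_bound} by $-r(z)$ and substituting \eqref{eq:DF_index_sufficient_condition} cancels the terms evaluated at $\pi(z)$ and gives, at $z$,
\[
  (-r)^{-1}\ddbar r(Z,\bar Z) + \ddbar h(Z,\bar Z) \ge \abs{\alpha_r(Z)}^2 + \tfrac{\eta}{1-\eta}\abs{\partial h(Z) - \alpha_r(Z)}^2 + (C+\epsilon)\abs{Z}^2 .
\]
Subtracting $\eta\abs{\partial h(Z)}^2$ and using the identity $\abs{v}^2 + \tfrac{\eta}{1-\eta}\abs{u-v}^2 - \eta\abs{u}^2 = \tfrac{1}{1-\eta}\abs{v-\eta u}^2$ with $u = \partial h(Z)$ and $v = \alpha_r(Z)$, the part of the right-hand side of \eqref{eq:rho_eta_hessian_identity} that involves only $Z$ (the terms carrying $\partial r(Z)$ drop out) is at least $\tfrac{1}{1-\eta}\abs{\xi}^2 + (C+\epsilon)\abs{Z}^2$, where $\xi := \alpha_r(Z) - \eta\partial h(Z)$. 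In the full form the coefficient of $\abs{a}^2$ is $(1-\eta)(-r)^{-2}\bigl(1+o(1)\bigr)$ --- here \eqref{eq:second_derivative_h_limit} turns the a priori singular $\ddbar h$ contributions into $o(1)$ errors --- and the cross term has leading part $2\re\bigl(\bar a\,(-r)^{-1}\xi\bigr)$. Completing the square in $a$ then leaves a remainder which is bounded below by $\tfrac{1}{1-\eta}\abs{\xi}^2 + (C+\epsilon)\abs{Z}^2$ minus error terms that, because $\alpha_r$ and $\partial h$ are bounded near $\partial\Omega$ (so $\abs{\xi}\le M\abs{Z}$ for a fixed $M$) and because the $o(1)$ quantities are small on a thin enough collar, do not exceed $\tfrac{1}{4} C\abs{Z}^2$; thus the remainder is at least $\tfrac{1}{2} C\abs{Z}^2$. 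A dichotomy on $\abs{a}$ --- either $\abs{a}$ is at most a fixed multiple of $(-r)\abs{Z}$, so that $\abs{W}^2 \le c\abs{Z}^2$, or $\abs{a}$ is larger, in which case the square term retained from completing the square already controls $\abs{a}^2$ --- upgrades this to a bound $\ge C'\abs{W}^2$. Multiplying by $\eta(-\rho)^\eta$ gives \eqref{eq:DF_Index} on an interior neighborhood of $\partial\Omega$; the case $\eta = 0$ runs identically from \eqref{eq:rho_zero_hessian_identity} with $\xi = \alpha_r(Z)$.

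For the Stein refinement the plan is the standard gluing with a smooth strictly plurisubharmonic exhaustion $\psi$ of $\Omega$. Keep $\rho = e^{-h}r$ on a thin collar of $\partial\Omega$; on the compact part of $\Omega$ away from that collar declare $-(-\rho)^\eta$ (resp.\ $-\log(-\rho)$) to equal $\epsilon_2\psi + c$, which is automatically strictly plurisubharmonic with a uniform lower bound and determines $\rho$ there; and on the transition shell between the two take $-(-\rho)^\eta$ (resp.\ $-\log(-\rho)$) to be the regularized maximum of these two prescriptions. For $\epsilon_2$ sufficiently small, $\epsilon_2\psi + c$ crosses $-(-e^{-h}r)^\eta$ inside the shell, so the regularized maximum is $C^2$, strictly plurisubharmonic with a uniform bound on the shell, and coincides with $-(-e^{-h}r)^\eta$ near the collar side and with $\epsilon_2\psi + c$ near the core side; the associated $\rho$ is $C^2$ there because its base is bounded away from $0$. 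The outcome is a single $C^2$ defining function $\rho$ for $\Omega$ equal to $e^{-h}r$ near $\partial\Omega$, and since $(-\rho)^\eta$ is bounded above on $\Omega$, the uniform strict plurisubharmonicity away from the collar together with the estimate of the previous paragraph gives \eqref{eq:DF_Index} (resp.\ \eqref{eq:Strong_Oka_Property}) on all of $\Omega$ after shrinking the constant.

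I expect the main obstacle to be the estimate of the second paragraph. The Hermitian form in $(Z,a)$ is badly unbalanced --- its $\abs{a}^2$-coefficient grows like $(-r)^{-2}$, its cross term only like $(-r)^{-1}$, and its $\abs{Z}^2$-part stays bounded --- so completing the square must be done carefully, exploiting \emph{both} that \eqref{eq:second_derivative_h_limit} annihilates the genuinely singular Hessian-of-$h$ terms and that $\alpha_r - \eta\partial h$ is pointwise bounded by $\abs{Z}$, the latter being what lets the leftover $\abs{\xi}^2$ be absorbed into the buffer $C\abs{Z}^2$. Equally delicate is the order of quantifiers: one fixes the parameters used in the square-completion first, then chooses $\epsilon < \tilde\epsilon$ in the hypothesis, and only then shrinks the collar so that every $o(1)$ quantity coming from $-r(z)\to 0$ drops below the resulting threshold.
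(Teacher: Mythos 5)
Your proposal is correct and follows essentially the same route as the paper's proof: set $\rho = e^{-h}r$, transport the hypothesis \eqref{eq:DF_index_sufficient_condition} into the collar via Lemma \ref{lem:Levi_form_bounds}, use the algebraic identity $|v|^2 + \tfrac{\eta}{1-\eta}|u-v|^2 - \eta|u|^2 = \tfrac{1}{1-\eta}|v-\eta u|^2$ to produce $\tfrac{1}{1-\eta}|\alpha_r(Z)-\eta\partial h(Z)|^2$, complete the square in the normal component $a L_r$, and glue with a strictly plurisubharmonic exhaustion in the Stein case. The paper organizes the completion of the square by minimizing the Hermitian form $(\ddbar\kappa_\eta(-\rho)+iC(-\rho)^\eta\omega)(Z+wL_r,\overline{Z}+\bar w\overline{L_r})$ over $w$ and reducing to the single scalar inequality \eqref{eq:sufficient_condition_relation}, which packages the error-absorption and your dichotomy on $|a|$ into one step, but it is the same computation; your observation that \eqref{eq:second_derivative_h_limit} is what guarantees $\rho$ is $C^2$ up to $\partial\Omega$ is a point the paper's proof leaves implicit.
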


\begin{proof}
  Let $0<\epsilon<\tilde\epsilon$ be given, and choose $\delta>0$ so that \eqref{eq:Levi_form_lower_bound} from Lemma \ref{lem:Levi_form_bounds} and \eqref{eq:DF_index_sufficient_condition} both hold when $0>r(z)>-\delta$.  Set $\rho=e^{-h}r$ on $U$.  Let $L_r$ be the unique $C^1$ section of $T^{1,0}(U)$ satisfying \eqref{eq:metric_compatibility_complex}.  Let $Z\in T^{1,0}(U)$ satisfy \eqref{eq:tangential_level_curves} and \eqref{eq:good_coordinate_ODE}.  For $x>0$ and $0\leq\eta<1$, set $\kappa_\eta(x)=\eta^{-1}(1-x^\eta)$ when $\eta>0$ and $\kappa_0(x)=-\log x$.  Using \eqref{eq:Lr_equals_one} with either \eqref{eq:rho_eta_hessian_identity} or \eqref{eq:rho_zero_hessian_identity}, we have
  \[
    (-\rho)^{-\eta}\ddbar(\kappa_\eta(-\rho))(L_r,\bar L_r)\geq(1-\eta)(-r)^{-2}-O((-r)^{-1})
  \]
  on $\Omega\cap U$, so we may assume that $\delta$ is sufficiently small so that $\ddbar(\kappa_\eta(-\rho))(L_r,\bar L_r)>C(-\rho)^\eta|L_r|^2$ whenever $0<-r(z)\leq\delta$.  For any fixed $z\in U$ at which $0<-r(z)<\delta$, the quadratic function $f:\mathbb{C}\rightarrow\mathbb{R}$ defined by
  \[
    f(w)=\left(\ddbar(\kappa_\eta(-\rho))+iC(-\rho)^\eta\omega\right)(Z+wL_r,\bar Z+\bar w\bar L_r)
  \]
  achieves its minimum at the critical point
  \[
    w_0=-\frac{\left(\ddbar(\kappa_\eta(-\rho))+iC(-\rho)^\eta\omega\right)(Z,\bar L_r)}{\left(\ddbar(\kappa_\eta(-\rho))+iC(-\rho)^\eta\omega\right)(L_r,\bar L_r)},
  \]
  and hence
  \begin{multline*}
    f(w)\geq f(w_0)=\\\left(\ddbar(\kappa_\eta(-\rho))+iC(-\rho)^\eta\omega\right)(Z,\bar Z)
    -\frac{\abs{\left(\ddbar(\kappa_\eta(-\rho))+iC(-\rho)^\eta\omega\right)(Z,\bar L_r)}^2}{\left(\ddbar(\kappa_\eta(-\rho))+iC(-\rho)^\eta\omega\right)(L_r,\bar L_r)}.
  \end{multline*}
  To prove either \eqref{eq:DF_Index} or \eqref{eq:Strong_Oka_Property} when $0<-r(z)<\delta$, it will suffice to show that
  \begin{equation}
  \label{eq:sufficient_condition_relation}
    \left(\ddbar(\kappa_\eta(-\rho))+iC(-\rho)^\eta\omega\right)(Z,\bar Z)
    \geq\frac{\abs{\left(\ddbar(\kappa_\eta(-\rho))+iC(-\rho)^\eta\omega\right)(Z,\bar L_r)}^2}{\left(\ddbar(\kappa_\eta(-\rho))+iC(-\rho)^\eta\omega\right)(L_r,\bar L_r)}
  \end{equation}
  for all $Z\in T^{1,0}(U)$ satisfying \eqref{eq:tangential_level_curves} and \eqref{eq:good_coordinate_ODE}.  Note that either \eqref{eq:rho_eta_hessian_identity} or \eqref{eq:rho_zero_hessian_identity} with \eqref{eq:Lr_equals_one} and \eqref{eq:tangential_level_curves} imply that
  \[
    \abs{(-\rho)^{-\eta}\ddbar(\kappa_\eta(-\rho))(Z,\bar L_r)}^2\leq(-r)^{-2}\abs{\alpha_r(Z)-\eta\partial h(Z)}^2+O((-r)^{-1}),
  \]
  so we have
  \begin{multline}
  \label{eq:sufficient_condition_right_hand_side}
    \frac{\abs{\left(\ddbar(\kappa_\eta(-\rho))+iC(-\rho)^\eta\omega\right)(Z,\bar L_r)}^2}{\left(\ddbar(\kappa_\eta(-\rho))+iC(-\rho)^\eta\omega\right)(L_r,\bar L_r)}\leq\\
    \frac{1}{1-\eta}(-\rho)^{\eta}\abs{\alpha_r(Z)-\eta\partial h(Z)}^2+O((-r)^{\eta+1}).
  \end{multline}

  A further application of either \eqref{eq:rho_eta_hessian_identity} or \eqref{eq:rho_zero_hessian_identity} gives us
  \begin{multline*}
    (-\rho)^{-\eta}\ddbar(\kappa_\eta(-\rho))(Z,\bar Z)\geq\\
    (-r)^{-1}\ddbar r(Z,\bar Z)-\eta|\partial h(Z)|^2+\ddbar h(Z,\bar Z)-O(-r).
  \end{multline*}
  If we substitute \eqref{eq:Levi_form_lower_bound} and \eqref{eq:DF_index_sufficient_condition}, we see that
  \begin{multline*}
    (-\rho)^{-\eta}\ddbar(\kappa_\eta(-\rho))(Z,\bar Z)\geq\\
    |\alpha_r(Z)|^2-\eta|\partial h(Z)|^2+\frac{\eta}{1-\eta}\abs{\partial h(Z)-\alpha_r(Z)}^2+(C+\epsilon)|Z|^2-O(-r).
  \end{multline*}
  whenever $z\in U$ satisfies $0<-r(z)<\delta$.  This is equivalent to
  \[
    (-\rho)^{-\eta}\ddbar(\kappa_\eta(-\rho))(Z,\bar Z)\geq
    \frac{1}{1-\eta}\abs{\eta\partial h(Z)-\alpha_r(Z)}^2+(C+\epsilon)|Z|^2-O(-r),
  \]
  so
  \begin{multline*}
    \left(\ddbar(\kappa_\eta(-\rho))+iC(-\rho)^\eta\omega\right)(Z,\bar Z)\geq\\
    \frac{1}{1-\eta}(-\rho)^{\eta}\abs{\eta\partial h(Z)-\alpha_r(Z)}^2+\epsilon(-\rho)^{\eta}|Z|^2-O((-r)^{\eta+1}).
  \end{multline*}
  Combined with \eqref{eq:sufficient_condition_right_hand_side}, we see that we may further shrink $\delta$ so that \eqref{eq:sufficient_condition_relation}, and hence either \eqref{eq:DF_Index} or \eqref{eq:Strong_Oka_Property} holds near all $z\in U$ satisfying $0<-r(z)<\delta$.

  If $\Omega$ is Stein, it admits a strictly plurisubharmonic exhaustion function $\phi$.  We may assume that $\phi$ is unbounded, so that $\lim_{z\rightarrow\partial\Omega}\phi(z)=\infty$.  By subtracting a sufficiently large constant, we may assume that if $z\in\Omega$ satisfies $\phi(z)=0$, then $0<-r(z)<\delta$ and
  \[
    \inf_{\{w\in\Omega:\phi(w)=0\}}\rho(w)>\sup_{\{w\in U:r(w)=-\delta\}}\rho(w).
  \]
  When $\eta>0$, we may set
  \[
    A=\sup_{\{w\in\Omega:\phi(w)=0\}}(-\rho(w))^\eta
  \]
  and
  \[
    B=\left(\inf_{\{w\in U:r(w)=-\delta\}}(-\rho(w))^\eta-A\right)\left(\sup_{\{w\in U:r(w)=-\delta\}}(-\phi(w))\right)^{-1}
  \]
  and know that both constants are strictly positive.  Set
  \[
    \lambda(z)=\begin{cases}
      -(-\rho(z))^\eta&\phi(z)\geq 0,\\
      \max\{-(-\rho(z))^\eta,B\phi(z)-A\}&\phi(z)<0\text{ and }-r(z)<\delta,\\
      B\phi(z)-A&-r(z)\geq\delta\text{ or }z\notin U.
    \end{cases}
  \]
  This is continuous on $\Omega$, $C^2$ except on a compact subset of $U\cap\Omega$, and strictly plurisubharmonic on $\Omega$.  After a standard regularization argument, we may replace $\lambda$ with a $C^2$ function $\tilde\lambda$ and set $\tilde\rho=-(-\tilde\lambda)^{1/\eta}$.  When $\eta=0$, the argument is similar apart from the natural modifications.
\end{proof}

Now we are ready to prove the converse to Proposition \ref{prop:DF_Index_necessary_condition_boundary}.
\begin{prop}
\label{prop:DF_Index_sufficient_condition_boundary}
  Let $M$ be a Hermitian manifold of complex dimension $n\geq 2$ and let $\Omega\subset M$ be a relatively compact pseudoconvex domain with $C^2$ boundary.  Let $U$ be a neighborhood of $\partial\Omega$ and let $r\in C^2(U)$ be an admissible defining function for $\Omega$.  Suppose that for some $0\leq\eta<1$ there exists a real-valued function $h\in C^2(U)$ and a constant $\tilde C>0$ with the property that
  \begin{equation}
  \label{eq:DF_index_sufficient_condition_boundary}
    -i\beta_r(Z,\bar Z)+\ddbar h(Z,\bar Z)\\
    >\frac{\eta}{1-\eta}\abs{\partial h(Z)-\alpha_r(Z)}^2+\tilde C|Z|^2
  \end{equation}
  for every $P\in\partial\Omega$ and $Z\in\mathcal{N}_P(\partial\Omega)$.  Then for every $0<C<\tilde C$ there exists a $C^2$ defining function $\rho$ for $\Omega$ such that either \eqref{eq:DF_Index} (for $\eta>0$) or \eqref{eq:Strong_Oka_Property} (for $\eta=0$) holds on some interior neighborhood of $\partial\Omega$.  If $\Omega$ is Stein, then $\rho$ can be chosen so that \eqref{eq:DF_Index} (when $\eta>0$) or \eqref{eq:Strong_Oka_Property} (when $\eta=0$) holds on all of $\Omega$.
\end{prop}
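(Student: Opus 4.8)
The plan is to deduce Proposition \ref{prop:DF_Index_sufficient_condition_boundary} from Lemma \ref{lem:DF_Index_sufficient_condition} by promoting the boundary estimate \eqref{eq:DF_index_sufficient_condition_boundary}, which is only assumed on the Levi null-spaces $\mathcal{N}_P(\partial\Omega)$, to the interior estimate \eqref{eq:DF_index_sufficient_condition} for \emph{all} $C^1$ sections $Z$ of $T^{1,0}(U)$ satisfying \eqref{eq:tangential_level_curves} and \eqref{eq:good_coordinate_ODE}. Let $X_r$ be the unique $C^1$ section of $T(U)$ satisfying \eqref{eq:metric_compatibility}, and let $\pi\colon U\to\partial\Omega$ be the $C^1$ projection along the flow of $X_r$ as in Lemma \ref{lem:DF_Index_sufficient_condition}. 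Since $h\in C^2(U)$ it lies in $C^1(U)\cap C^2(U\setminus\partial\Omega)$ and satisfies \eqref{eq:second_derivative_h_limit} trivially, because $\Hess_\nabla(Y_1,Y_2)h$ is bounded on a compact neighborhood of $\partial\Omega$ while $-r(z)\to 0$. Hence, once \eqref{eq:DF_index_sufficient_condition} is established, Lemma \ref{lem:DF_Index_sufficient_condition} produces the desired $C^2$ defining function $\rho$ satisfying \eqref{eq:DF_Index} (for $\eta>0$) or \eqref{eq:Strong_Oka_Property} (for $\eta=0$) on some interior neighborhood of $\partial\Omega$, and on all of $\Omega$ when $\Omega$ is Stein.

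Now fix $0<C<\tilde C$ and set $\tilde\epsilon=\frac{1}{2}\min\{1,\tilde C-C\}$, so $0<\tilde\epsilon<1$. I claim that for every $0<\epsilon<\tilde\epsilon$ there is $\delta>0$ such that \eqref{eq:DF_index_sufficient_condition} holds whenever $0<-r(z)<\delta$. Both sides of \eqref{eq:DF_index_sufficient_condition} are homogeneous of degree two in $Z$, rescaling $Z$ by a constant preserves \eqref{eq:tangential_level_curves} and the linear equation \eqref{eq:good_coordinate_ODE}, and the computation in the proof of Lemma \ref{lem:Levi_form_bounds} shows $X_r|Z|^2\equiv 0$; thus it suffices to treat sections normalized so that $|Z|\equiv 1$ on $U$. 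Suppose the claim fails for some $\epsilon\in(0,\tilde\epsilon)$. Then there are sections $Z_j$ with $|Z_j|\equiv 1$ satisfying \eqref{eq:tangential_level_curves} and \eqref{eq:good_coordinate_ODE}, and points $z_j\in U\cap\Omega$ with $-r(z_j)\to 0$, at which \eqref{eq:DF_index_sufficient_condition} fails. Passing to a subsequence we may assume $z_j\to P\in\partial\Omega$, hence $\pi(z_j)\to P$, and $Z_j|_{\pi(z_j)}\to W$ for some $W\in T^{1,0}_P(\partial\Omega)$ with $|W|=1$, since the unit sphere bundle of $T^{1,0}(\partial\Omega)$ is compact. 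As $z_j$ is obtained from $\pi(z_j)$ by flowing along $X_r$ for time $r(z_j)\to 0$ and \eqref{eq:good_coordinate_ODE} is a linear ODE whose coefficients are bounded uniformly near $\partial\Omega$, a Gr\"onwall estimate gives $\abs{Z_j|_{z_j}-Z_j|_{\pi(z_j)}}\to 0$, so $Z_j|_{z_j}\to W$ as well.

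Discard from the failed inequality the term $(1-\epsilon)(-r(z_j))^{-1}\ddbar r(Z_j,\bar Z_j)|_{\pi(z_j)}$, which is nonnegative by pseudoconvexity; the remaining terms are uniformly bounded because $|Z_j|=1$ and all coefficients are continuous on a compact neighborhood of $\partial\Omega$, and the failed inequality therefore also forces $\ddbar r(Z_j,\bar Z_j)|_{\pi(z_j)}$ to be bounded by a constant multiple of $-r(z_j)$, hence to tend to $0$. Letting $j\to\infty$ and using continuity of $r$, $\alpha_r$, $\beta_r$ (Proposition \ref{prop:beta}) and $\ddbar h$, we get $\ddbar r(W,\bar W)|_P=0$, which together with pseudoconvexity forces $W\in\mathcal{N}_P(\partial\Omega)$, and the limiting form of the failed inequality becomes
\[
  -i\beta_r(W,\bar W)|_P+\ddbar h(W,\bar W)|_P\le\frac{\eta}{1-\eta}\abs{\partial h(W)-\alpha_r(W)}^2|_P+C+2\epsilon .
\]
On the other hand $W\in\mathcal{N}_P(\partial\Omega)$ with $|W|=1$, so \eqref{eq:DF_index_sufficient_condition_boundary} gives the reverse strict inequality with $\tilde C$ in place of $C+2\epsilon$, whence $\tilde C<C+2\epsilon<C+2\tilde\epsilon\le\tilde C$, a contradiction. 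This proves the claim, and the proposition follows from Lemma \ref{lem:DF_Index_sufficient_condition} as explained above.

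The main obstacle I anticipate is the low-regularity bookkeeping inside this compactness argument: since \eqref{eq:good_coordinate_ODE} has only continuous coefficients, one must check that the solutions $Z_j$ are controlled uniformly along the short integral-curve segments joining $\pi(z_j)$ to $z_j$ so that $Z_j|_{z_j}\to W$, and one must confirm that every term in \eqref{eq:DF_index_sufficient_condition} — in particular $\alpha_r$ and $\beta_r$, which are merely continuous — passes to the limit. All the genuinely analytic work has been isolated in Lemmas \ref{lem:Levi_form_bounds} and \ref{lem:DF_Index_sufficient_condition}, so beyond this the argument is routine.
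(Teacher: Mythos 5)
Your proof is correct and follows essentially the same contradiction-by-compactness argument as the paper: normalize $Z_j$ so its norm is $1$ (the paper phrases this as $|Z_j||_{z_j}=|Z_j||_{\pi(z_j)}=1$, since $|Z_j|$ is constant only along each integral curve of $X_r$, not globally on $U$), extract a convergent subsequence, use boundedness of the other terms to force $\ddbar r(Z_j,\bar Z_j)|_{\pi(z_j)}\to 0$ so the limit lies in $\mathcal N_P(\partial\Omega)$, and contradict \eqref{eq:DF_index_sufficient_condition_boundary}; then invoke Lemma \ref{lem:DF_Index_sufficient_condition}. Your added observations — that $h\in C^2(U)$ trivially gives \eqref{eq:second_derivative_h_limit}, and the Gr\"onwall justification for $Z_j|_{z_j}-Z_j|_{\pi(z_j)}\to 0$ — fill in steps the paper treats as immediate, and your $\tilde\epsilon=\frac{1}{2}\min\{1,\tilde C-C\}$ is an equally valid (in fact slightly cleaner) choice than the paper's $\min\{1,\frac{\tilde C-C}{2}\}$.
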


\begin{proof}
  Let $X_r$ be the unique $C^1$ section of $T(U)$ satisfying \eqref{eq:metric_compatibility}, and let $\pi:U\rightarrow\partial\Omega$ be the unique $C^1$ projection satisfying $\pi(P)=P$ for all $P\in\partial\Omega$ and $X_r\pi\equiv 0$ on $U$.

  Fix $0<C<\tilde C$, and set $\tilde\epsilon=\min\set{1,\frac{\tilde C-C}{2}}$.  Suppose there exists $0<\epsilon<\tilde\epsilon$ such that for every $j\in\mathbb{N}$ there exists a $C^1$ section $Z_j$ of $T^{1,0}(U)$ satisfying \eqref{eq:tangential_level_curves} and \eqref{eq:good_coordinate_ODE} and $z_j\in U\cap\Omega$ satisfying $0<-r(z_j)<\frac{1}{j}$ such that \eqref{eq:DF_index_sufficient_condition} fails for $Z_j$ at $z_j$, i.e.,
  \begin{multline*}
    (1-\epsilon)(-r(z_j))^{-1}\ddbar r(Z_j,\bar Z_j)|_{\pi(z_j)}-i\beta_r(Z_j,\bar Z_j)|_{z_j}+\ddbar h(Z_j,\bar Z_j)|_{z_j}\\
    <\frac{\eta}{1-\eta}\abs{\partial h(Z_j)-\alpha_r(Z_j)}^2|_{z_j}+\left(C+2\epsilon\right)|Z_j|^2|_{z_j}.
  \end{multline*}
  As in the proof of Lemma \ref{lem:Levi_form_bounds}, $|Z_j||_{z_j}=|Z_j||_{\pi(z_j)}$.  If $|Z_j||_{z_j}=|Z_j||_{\pi(z_j)}=0$, then \eqref{eq:DF_index_sufficient_condition} holds trivially, so we may normalize and assume that $|Z_j||_{z_j}=|Z_j||_{\pi(z_j)}=1$ for all $j\in\mathbb{N}$.  After restricting to a subsequence, we may assume that $z_j\rightarrow P$ for some $P\in\partial\Omega$ and $Z_j|_{z_j}\rightarrow Z$ for some $Z\in T_P^{1,0}(\partial\Omega)$.  We must have $Z_j|_{\pi(z_j)}\rightarrow Z$ as well.  Since $(1-\epsilon)(-r(z_j))^{-1}\ddbar r(Z_j,\bar Z_j)|_{\pi(z_j)}\geq 0$ for all $j\in\mathbb{N}$ and this term admits a uniform upper bound by assumption, we must have $\ddbar r(Z,\bar Z)=0$ at $P$.  Since $\partial\Omega$ is pseudoconvex, this means that $Z\in\mathcal{N}_P(\partial\Omega)$.  Furthermore, we have
  \[
    -i\beta_r(Z_j,\bar Z_j)|_{z_j}+\ddbar h(Z_j,\bar Z_j)|_{z_j}\\
    <\frac{\eta}{1-\eta}\abs{\partial h(Z_j)-\alpha_r(Z_j)}^2|_{z_j}+\left(C+2\epsilon\right)|Z_j|^2|_{z_j}
  \]
  for all $j\in\mathbb{N}$, so we have
  \[
    -i\beta_r(Z,\bar Z)|_P+\ddbar h(Z,\bar Z)|_P
    \leq\frac{\eta}{1-\eta}\abs{\partial h(Z)-\alpha_r(Z)}^2|_P+\left(C+2\epsilon\right)|Z|^2|_P.
  \]
  However, $C+2\epsilon<C+2\tilde\epsilon\leq\tilde C$, so we have contradicted \eqref{eq:DF_index_sufficient_condition_boundary}.  Hence, \eqref{eq:DF_index_sufficient_condition} must hold, and the conclusion follows from Lemma \ref{lem:DF_Index_sufficient_condition}.
\end{proof}

\section{Proofs of the Main Theorems}

\label{sec:proofs}

\begin{proof}[Proof of Theorem \ref{thm:boundary_equivalence}]
  We see that (1) implies (2) by Proposition \ref{prop:DF_Index_necessary_condition_boundary} and regularization of $h$.  It is trivial that (2) implies (3).

  Suppose that for some $0\leq\eta<1$, $C>0$, and $h\in C^2(\partial\Omega)$, \eqref{eq:DF_Index_boundary} holds for all $P\in\partial\Omega$ and $Z\in\mathcal{N}_P(\partial\Omega)$.  For any $0<\tilde C<C$, there exists $\eta<\tilde\eta<1$ such that \eqref{eq:DF_Index_boundary} also holds if we replace $C$ with $\tilde C$ and $\eta$ with $\tilde\eta$.  Hence Proposition \ref{prop:DF_Index_sufficient_condition_boundary} implies that $\tilde\eta$ is also a strong Diederich-Forn{\ae}ss exponent, and so $DF(\Omega)>\eta$.  This prove that (3) implies (1).

  From the proof of Lemma \ref{lem:DF_Index_sufficient_condition}, it can be seen that $\rho=r e^{-h}$ satisfies either \eqref{eq:DF_Index} (when $\eta>0$) or \eqref{eq:Strong_Oka_Property} (when $\eta=0$) on some interior neighborhood of $\partial\Omega$.
\end{proof}

\begin{proof}[Proof of Theorem \ref{thm:geometric_equivalence}]
  Suppose $0\leq\eta<DF(\Omega)$.  Fix a neighborhood $U$ of $\partial\Omega$ and an admissible defining function $r\in C^2(U)$ for $\Omega$ and let $h\in C^\infty(U)$ be the function given by (2) in Theorem \ref{thm:boundary_equivalence}.  Let $\left<\cdot,\cdot\right>_0$ be an arbitrary Hermitian metric on $M$, and let $L_r^0$ be the unique $C^1$ section of $T^{1,0}(U)$ satisfying \eqref{eq:metric_compatibility_complex} with respect to $\left<\cdot,\cdot\right>_0$.  Define a new Hermitian metric by $\left<Z,W\right>_1=2e^{-2h}|\partial r|_0^{2}\left<Z,W\right>_0$ for all $Z,W\in T^{1,0}(U)$.  Since $r$ is admissible, this metric has $C^2$ coefficients with respect to any local holomorphic coordinates.  Using \eqref{eq:dual_norm_identity_complex}, $|L_r^0|_1=\sqrt{2}e^{-h}$.  Since this is a conformal change of metric, $L_r^1=L_r^0$ is the unique $C^1$ section satisfying \eqref{eq:metric_compatibility_complex} with respect to $\left<\cdot,\cdot\right>_1$.  By \eqref{eq:dual_norm_identity_complex}, $|\partial r|_1=\frac{1}{\sqrt{2}}e^h$, so $|dr|_1=e^h$.  Note that we can take $\nu_{\mathbb{R}}=|X_r^1|_1^{-1}X_r^1$ and $\nu_{\mathbb{C}}=|L_r^1|_1^{-1}L_r^1$, where $X_r^1$ is the unique $C^1$ section of $T(U)$ satisfying \eqref{eq:metric_compatibility} with respect to $\left<\cdot,\cdot\right>_1$.
  For any $Z\in\mathcal{N}_P(\partial\Omega)$, \eqref{eq:alpha_r_geometric} implies
  \[
    \alpha_r(Z)=Zh-i\left<\sff_{\nabla^1}(Z,J\nu_{\mathbb{R}}),\nu_{\mathbb{R}}\right>_1,
  \]
  and \eqref{eq:beta_r_geometric} implies
  \[
    \beta_r(Z,\bar Z)|_P=-i\ddbar h(Z,\bar Z)
    +i\sum_{j=1}^{n-1}\abs{\sff_{\nabla^1}(Z,W_j)}_1^2+\frac{i}{2}\left<R_{\nabla^1}(Z,\bar Z)\nu_{\mathbb{C}},\nu_{\mathbb{C}}\right>_1.
  \]
  Substituting these in \eqref{eq:DF_Index_boundary} gives us \eqref{eq:DF_Index_boundary_geometry}.  By decreasing the value of $C$, \eqref{eq:DF_Index_boundary_geometry} is stable under $C^2$ perturbations of the metric, so we may regularize and obtain a smooth metric also satisfying \eqref{eq:DF_Index_boundary_geometry}.  Hence (1) implies (2).

  To see that (2) and (3) are equivalent, it will suffice to show that \eqref{eq:DF_Index_boundary_geometry} and \eqref{eq:DF_Index_boundary_vector_field} are equivalent.  Fix $P\in\partial\Omega$ and $Z\in\mathcal{N}_P(\partial\Omega)$.  Given an orthonormal basis $\{W_j\}_{j=1}^{n-1}$ for $T^{1,0}_P(\partial\Omega)$, then since $\nabla_{\bar Z}\nu_{\mathbb{C}}-\left<\nabla_{\bar Z}\nu_{\mathbb{C}},\nu_{\mathbb{C}}\right>\nu_{\mathbb{C}}\in T^{1,0}_P(\partial\Omega)$, we have
  \[
    \abs{\nabla_{\bar Z}\nu_{\mathbb{C}}-\left<\nabla_{\bar Z}\nu_{\mathbb{C}},\nu_{\mathbb{C}}\right>\nu_{\mathbb{C}}}^2=\sum_{j=1}^{n-1}\abs{\left<\nabla_{\bar Z}\nu_{\mathbb{C}},W_j\right>}^2.
  \]
  Since $\nu_{\mathbb{R}}=\frac{\nu_{\mathbb{C}}+\bar\nu_{\mathbb{C}}}{\sqrt{2}}$, we have
  \begin{equation}
  \label{eq:dbar_nu_tangential}
    \abs{\nabla_{\bar Z}\nu_{\mathbb{C}}-\left<\nabla_{\bar Z}\nu_{\mathbb{C}},\nu_{\mathbb{C}}\right>\nu_{\mathbb{C}}}^2=2\sum_{j=1}^{n-1}\abs{\left<\nabla_{\bar Z}\nu_{\mathbb{R}},W_j\right>}^2=2\sum_{j=1}^{n-1}\abs{\sff_\nabla(\bar Z,\bar W_j)}^2.
  \end{equation}
  We also have
  \[
    \abs{\sff_\nabla(\bar Z,J\nu_{\mathbb{R}})}^2=\abs{\left<\nabla_{\bar Z}\nu_{\mathbb{R}},J\nu_{\mathbb{R}}\right>}^2.
  \]
  Expanding in terms of $\nu_{\mathbb{C}}$ again gives us
  \[
    \abs{\sff_\nabla(\bar Z,J\nu_{\mathbb{R}})}^2=\frac{1}{4}\abs{\left<\nabla_{\bar Z}\bar \nu_{\mathbb{C}},\bar \nu_{\mathbb{C}}\right>-\left<\nabla_{\bar Z}\nu_{\mathbb{C}},\nu_{\mathbb{C}}\right>}^2.
  \]
  Since $|\nu_{\mathbb{C}}|^2$ is constant, we may use the metric compatibility of $\nabla$ to obtain
  \begin{equation}
  \label{eq:dbar_nu_normal}
    \abs{\sff_\nabla(\bar Z,J\nu_{\mathbb{R}})}^2=\abs{\left<\nabla_{\bar Z}\nu_{\mathbb{C}},\nu_{\mathbb{C}}\right>}^2.
  \end{equation}
  From \eqref{eq:dbar_nu_tangential} and \eqref{eq:dbar_nu_normal}, we see that \eqref{eq:DF_Index_boundary_geometry} and \eqref{eq:DF_Index_boundary_vector_field} are equivalent.

  Now suppose that we already have a Hermitian metric satisfying \eqref{eq:DF_Index_boundary_geometry} for some $0\leq\eta<1$, then let $U$ be a neighborhood of $\partial\Omega$ on which there exists a unique $r\in C^2(U)$ such that $|dr|\equiv 1$ on $U$, i.e., $r$ is the signed distance function, and set $h\equiv 0$ on $U$.  Then \eqref{eq:DF_Index_boundary} follows from \eqref{eq:DF_Index_boundary_geometry} with \eqref{eq:alpha_r_geometric} and \eqref{eq:beta_r_geometric}.  Hence, $\eta<DF(\Omega)$ by Theorem \ref{thm:boundary_equivalence}, so (2) implies (1).  As noted in Theorem \ref{thm:boundary_equivalence}, $\rho=r e^0=r$ satisfies either \eqref{eq:DF_Index} (when $\eta>0$) or \eqref{eq:Strong_Oka_Property} (when $\eta=0$), so (2) implies (4).

  If we consider the proof of Lemma \ref{lem:DF_Index_necessary_condition} and Proposition \ref{prop:DF_Index_necessary_condition_boundary}, we see that if $\rho=r$, then we may take $h=0$, which means that no regularization of $h$ is required.  Hence, Proposition \ref{prop:DF_Index_necessary_condition_boundary} with $\rho=r$ equal to the signed distance function and $h=0$ proves that (4) implies (2) via \eqref{eq:alpha_r_geometric} and \eqref{eq:beta_r_geometric}.
\end{proof}

\begin{proof}[Proof of Corollary \ref{cor:complex_submanifold_Kahler}]
  Suppose that $S\subset\partial\Omega$ is a compact complex submanifold of positive dimension.  Let $U$ be a neighborhood of $\partial\Omega$ and let $r\in C^2(U)$ be an admissible defining function for $\Omega$.  Let $0<\eta<DF(\Omega)$. Fix $C>0$ and $h\in C^\infty(U)$ satisfying \eqref{eq:DF_Index_boundary} for all $Z\in T^{1,0}(U)$.  By Proposition \ref{prop:alpha_closed}, $\alpha_r|_S$ is $d_S$-closed, so the Hodge decomposition implies that $\alpha_r|_S=d_S u+\tilde\alpha_r$, where $u\in C^1(S)$ and $\tilde\alpha_r$ lies in the kernel of $d_S$ and $d^*_S$.  Here, $d^*_S$ denotes the Hilbert space adjoint of $d_S$ with respect to $L^2(S)$.  Since $\tilde\alpha_r$ is harmonic, it must also be smooth.  By Proposition \ref{prop:beta}, $\beta_r|_S=-i\partial_S\dbar_S u-\frac{i}{2}(\partial_S-\dbar_S)\tilde\alpha_r$ in the sense of currents.  Since $\tilde\alpha_r$ is smooth and $\beta_r$ is continuous, this also means that $i\partial_S\dbar_S u$ can be identified with a continuous $(1,1)$-form.  By Theorem \ref{thm:boundary_equivalence}, the $(1,1)$-current on $S$ defined by $\theta(Z,\bar W)=-\frac{i}{2}(\partial_S-\dbar_S)\tilde\alpha_r(Z,\bar W)+i\partial_S\dbar_S(h-u)(Z,\bar W)$ for all $Z,W\in T^{1,0}(S)$ is necessarily positive definite on $S$.  If we regularize $h-u$ by convolution, we obtain a smooth, positive-definite, real-valued $(1,1)$-form on $S$.  Hence, $S$ is K\"ahler.

  Henceforth, we endow $S$ with a K\"ahler metric and extend this metric to a Hermitian metric on $M$.  Note that this may require a new choice of $h\in C^\infty(U)$ and $C>0$.  Let $\tilde\alpha_r$ and $u$ be as before, and set $\tilde\beta_r=-\frac{i}{2}(\partial_S-\dbar_S)\tilde\alpha_r$ on $S$, so that $\beta_r|_S=i\partial_S\dbar_S u+\tilde\beta_r$.  We decompose $\tilde\alpha_r=\gamma_r+\bar\gamma_r$, where $\gamma_r\in\Lambda^{1,0}(U)$ is a smooth $(1,0)$-form.  Since $d_S\tilde\alpha_r=0$, we have $\partial_S\gamma_r=0$ and $\dbar_S\gamma_r+\partial_S\bar\gamma_r=0$.  This means that $\tilde\beta_r=d_S\left(\frac{i}{2}(\gamma_r-\bar\gamma_r)\right)$, so $\tilde\beta_r$ is $d_S$-exact.  On the other hand, $d^*_S\tilde\alpha_r=0$ implies that $\alpha_r$ is harmonic with respect to the $d_S$-Laplacian $\Delta_{d_S}=d_S d_S^*+d_S^* d_S$.  Since we are working with a K\"ahler metric, we also have $\Box_S\alpha_r=0$, where $\Box_S=\dbar_S\dbar^*_S+\dbar^*_S\dbar_S$ denotes the $\dbar_S$-Laplacian.  Since $\Box_S$ preserves the degrees of differential forms, this means that we have $\Box_S\gamma_r=0$ and $\Box_S\bar\gamma_r=0$.  We can write $\tilde\beta_r=i\dbar_S\gamma_r$, so $\dbar^*_S\tilde\beta_r=-i\dbar_S\dbar_S^*\gamma_r$.  Since the range of $\dbar^*_S$ is orthogonal to the range of $\dbar_S$, we must have $\dbar^*_S\tilde\beta_r=0$.  However, these means that $\tilde\beta_r\in\ker\dbar^*_S$ and $\tilde\beta_r\in\range\dbar_S$, but these spaces are also orthogonal, so we must have $\tilde\beta_r=0$.  This means that $\beta_r=i\partial_S\dbar_S u$, and so Theorem \ref{thm:boundary_equivalence} implies that $h-u$ is strictly plurisubharmonic on $S$.  However, no such functions exist on a compact complex manifold of dimension one or greater (even with the relaxed regularity assumption), so $S$ must be dimension zero.
\end{proof}

\appendix

\section{Analysis on \texorpdfstring{$C^2$}{C2} Manifolds}

\label{sec:analysis_on_manifolds}

The following regularity result is a straightforward application of the method of characteristics and the basic theory of ordinary differential equations (see, for example, Section 17.6 of \cite{HSD13}).  We outline the proof here primarily to illustrate that it suffices for $X$ to be a $C^1$ section.
\begin{lem}
\label{lem:characteristic_regularity}
  Let $M$ be a $C^2$ manifold of real dimension $n\geq 2$, let $\Omega\subset M$ be a relatively compact domain with $C^2$ boundary, and let $X$ be a $C^1$ section of $T(M)$ that is transverse to $\partial\Omega$.  Then there exists a neighborhood $U$ of $\partial\Omega$ so that for every $N\in\mathbb{N}$ and $\{f_j\}_{j=1}^N\subset C(U)$ with the properties
  \begin{enumerate}
    \item $f_j|_{\partial\Omega}\in C^1(\partial\Omega)$ for all $1\leq j\leq N$,
    \item $Xf_j$ exists on $U$ for all $1\leq j\leq N$, and
    \item $Xf_j=\sum_{k=1}^N A_j^k f_k+B_j$ for all $1\leq j\leq N$, where $\{A_j^k\}_{1\leq j,k\leq N}\subset C^1(U)$ and $\{B_j\}_{1\leq j\leq N}\subset C^1(U)$,
  \end{enumerate}
  we have
  \begin{enumerate}
    \item $f_j\in C^1(U)$ for all $1\leq j\leq N$,
    \item $X(Yf_j)$ exists on $U$ for every $C^1$ section $Y$ of $T(U)$ and $1\leq j\leq N$, and
    \item \begin{equation}\label{eq:variational_equation}X(Yf_j)=[X,Y]f_j+Y(Xf_j)\end{equation}
        on $U$ for all $C^1$ sections $Y$ of $T(U)$ and $1\leq j\leq N$.
  \end{enumerate}
\end{lem}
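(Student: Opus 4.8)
The plan is to straighten $X$ by its flow near $\partial\Omega$ and then solve the resulting linear system of ordinary differential equations along the characteristics. Because $\Omega$ is relatively compact, $\partial\Omega$ is a compact $C^2$ hypersurface, and because $X$ is a $C^1$ section transverse to it, the flow $\psi$ of $X$ normalized by $\psi(\cdot,0)=\mathrm{id}$ and $\frac{\partial}{\partial t}\psi=X\circ\psi$ exists for times in some $(-\eps,\eps)$ and is $C^1$ jointly in its arguments by the basic theory of ordinary differential equations applied in $C^2$ charts of $M$; this is the first place where the hypothesis on $X$ matters, since a $C^1$ right-hand side still yields a $C^1$ flow. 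Shrinking $\eps$ so that $(P,t)\mapsto\psi(P,t)$ is a $C^1$ diffeomorphism of $\partial\Omega\times(-\eps,\eps)$ onto a neighborhood of $\partial\Omega$ produces the neighborhood $U$; in the induced coordinates $X=\frac{\partial}{\partial t}$. For each $P\in\partial\Omega$, hypotheses (2) and (3) say that $t\mapsto(f_1,\dots,f_N)(\psi(P,t))$ is a continuous curve, differentiable in $t$, solving the linear system $\frac{d}{dt}f_j=\sum_k(A_j^k\circ\psi)f_k+(B_j\circ\psi)$ with initial value $(f_j(P))_j$, whose coefficients lie in $C^1$ by hypothesis; by uniqueness this curve is the ODE solution, which depends $C^1$ on $P$ (using hypothesis (1) for the initial value) and on $t$ by differentiable dependence on data. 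Composing with the $C^1$ map $\psi^{-1}$ gives $f_j\in C^1(U)$, the first conclusion.

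For the remaining conclusions, observe first that $F_j:=Xf_j=\sum_k A_j^k f_k+B_j$ now lies in $C^1(U)$. Fix a $C^2$ coordinate patch $x=(x^1,\dots,x^n)$ on some $W\subset U$; after shrinking, the flow gives a $C^1$ map $\psi$ from a neighborhood of the zero section in $W\times\Real$ into $W$, and I set $\Phi_j(z,t)=f_j(\psi(z,t))$. Then $\Phi_j$ is $C^1$ and $\frac{\partial}{\partial t}\Phi_j=F_j\circ\psi$ is $C^1$, so $\frac{\partial}{\partial x^i}\Phi_j$ exists, $\frac{\partial}{\partial x^i}\frac{\partial}{\partial t}\Phi_j$ exists and is continuous, and the standard theorem on interchanging a continuous mixed partial with an existing one (if $\partial_i g$ and $\partial_j g$ exist in a neighborhood and $\partial_i\partial_j g$ exists and is continuous there, then $\partial_j\partial_i g$ exists and equals it) gives that $\frac{\partial}{\partial t}\frac{\partial}{\partial x^i}\Phi_j$ exists. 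By the chain rule, $\frac{\partial}{\partial x^i}\Phi_j(z,t)=\sum_k M_i^k(z,t)(\partial_{x^k}f_j)(\psi(z,t))$ with $M_i^k=\frac{\partial\psi^k}{\partial x^i}$, and for fixed $z$ the matrix $M(z,\cdot)$ is invertible and of class $C^1$ in $t$, since it solves the variational equation $\frac{d}{dt}M=(DX\circ\psi)\,M$, $M|_{t=0}=\mathrm{I}$, which has continuous coefficients; inverting this $C^1$ matrix shows that $(\partial_{x^k}f_j)(\psi(z,t))$ is differentiable in $t$, i.e.\ $X(\partial_{x^k}f_j)$ exists on $W$. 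For an arbitrary $C^1$ section $Y=\sum_i b^i\frac{\partial}{\partial x^i}$ of $T(W)$, the product rule then yields that $X(Yf_j)=\sum_i(Xb^i)(\partial_{x^i}f_j)+\sum_i b^i\,X(\partial_{x^i}f_j)$ exists, which is conclusion (2); patching over charts covers all of $U$. Finally \eqref{eq:variational_equation} follows by expanding both sides in a $C^2$ chart, using $[X,\partial_{x^i}]=-\sum_k(\partial_{x^i}a^k)\partial_{x^k}$ for $X=\sum_k a^k\partial_{x^k}$ and the identity $\frac{d}{dt}M_i^k|_{t=0}=\partial_{x^i}a^k$ extracted from the variational equation, and then extending to general $Y$ by the same product-rule expansion.

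The ODE inputs (existence and $C^1$ dependence of the flow and of solutions of linear systems) are routine. The step that needs care is the existence of $X(Yf_j)$ in conclusion (2): since $f_j$ is only $C^1$ and, worse, need not be twice differentiable in directions transverse to $X$, the second directional derivative $X(Yf_j)$ cannot be assembled from ordinary second-order partials of $f_j$. The argument above sidesteps this by working with $\Phi_j=f_j\circ\psi$ and exploiting that the spatial derivative $M=D_z\psi$ of the flow, although merely continuous in $z$, is still $C^1$ along the $X$-direction by the variational equation --- which is precisely what makes the interchange-of-partials theorem applicable and where the $C^1$ regularity of $X$ is used a second and final time.
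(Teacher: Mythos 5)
Your proof is correct and follows the same basic route as the paper's: straighten $X$ by its flow off $\partial\Omega$, solve the linear system of ODEs along characteristics to get $f_j\in C^1$, and then use the variational equation for the flow together with an interchange-of-mixed-partials argument to obtain existence of $X(Yf_j)$ and the commutation identity \eqref{eq:variational_equation}. The only notable difference is that where the paper writes ``expanding and simplifying with \eqref{eq:varphi_variational_equation} \dots we omit the details,'' you make the omitted step explicit by working with $\Phi_j=f_j\circ\psi$, invoking the Schwarz-type theorem (one mixed partial exists and is continuous $\Rightarrow$ the other exists and agrees), and inverting the $C^1$-in-$t$ Jacobian matrix $M=D_z\psi$ to recover $X(\partial_{x^k}f_j)$, which is a clean and correct way to fill that gap.
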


\begin{proof}
  For $P\in\partial\Omega$, let $U_P$ be a neighborhood of $P$ on which there exist local $C^2$ coordinates $\{x_j\}_{j=1}^n$ such that $\frac{\partial}{\partial x_j}\in T_P(\partial\Omega)$ for all $1\leq j\leq n-1$.  If we write $X=\sum_{j=1}^n a^j(x)\frac{\partial}{\partial x_j}$ for functions $a^j\subset C^1(U_P)$, then the basic theory of ordinary differential equations guarantees that for every point $Q\in\partial\Omega\cap U_P$ there exists some $\delta_Q>0$ and a function $\psi(Q,\cdot):(-\delta_Q,\delta_Q)\rightarrow U$ such that $\psi(Q,0)=Q$ and $\frac{\partial}{\partial t}\psi^j(Q,t)=a^j(\psi(Q,t))$ whenever $|t|<\delta_Q$ and $1\leq j\leq n$.  Furthermore, for all $1\leq j\leq n$ and $Y\in T_Q(\partial\Omega)$, $Y\psi^j(Q,t)$ satisfies the variational equation
  \begin{equation}
  \label{eq:varphi_variational_initial_condition}
    Y\psi^j(Q,0)=dx_j(Y)|_Q
  \end{equation}
  and
  \begin{equation}
  \label{eq:varphi_variational_equation}
    \frac{\partial}{\partial t}\left(Y\psi^j(Q,t)\right)=\sum_{\ell=1}^n \frac{\partial a^j}{\partial x_k}(\psi(Q,t))Y\psi^k(Q,t).
  \end{equation}

  We may shrink $U_P$ so that $\delta_Q\leq\delta$ for some fixed $\delta>0$ and any $Q\in\partial\Omega\cap U_P$.  We further assume that $U_P$ is sufficiently small so that we may let $\{X_j\}_{j=1}^{n-1}$ be linearly independent $C^1$ sections of $T(\partial\Omega\cap U_P)$ such that $X_j|_P=\frac{\partial}{\partial x_j}$.  For each $Q\in\partial\Omega\cap U_P$ and $t\in(-\delta,\delta)$, define a matrix $M(Q,t)=\left(M^j_k(Q,t)\right)_{1\leq j,k\leq n}$ by $M^j_k(Q,t)=X_k\varphi^j(Q,t)$ for $1\leq j\leq n$ and $1\leq k\leq n-1$ and $M^j_n(Q,t)=a^j(\varphi(Q,t))$ for $1\leq j\leq n$.  By \eqref{eq:varphi_variational_initial_condition}, $M^j_k(P,0)=I_{jk}$ for all $1\leq j\leq n$ and $1\leq k\leq n-1$.  Since $X$ is transverse to $\partial\Omega$, $M^n_n(P,0)=a^n(P)\neq 0$, and hence $M(P,0)$ is a non-singular matrix.  The Implicit Function Theorem guarantees that we may further shrink $U_P$ so that there exists a $C^1$ map $\pi:U_P\rightarrow\partial\Omega$ and a $C^1$ function $r:U_P\rightarrow\mathbb{R}$ such that $\psi(\pi(x),r(x))=x$ on $U_P$.  For $Q\in\partial\Omega\cap U_P$, $\psi(Q,0)=Q$, so the uniqueness of $\pi$ and $r$ guarantees that $\pi(Q)=Q$ and $r(Q)=0$ for all $Q\in\partial\Omega\cap U_P$.  For $Q\in\partial\Omega\cap U_P$ and $t\in\mathbb{R}$ sufficiently small, we must have $\pi(\psi(Q,t))=Q$ and $r(\psi(Q,t))=t$.  Differentiating each identity by $t$, we find that $X\pi\equiv 0$ and $Xr\equiv 1$ on $U_P$.

  Since $\frac{\partial}{\partial t}f_m(\psi(Q,t))=Xf_m(\psi(Q,t))$ for each $1\leq m\leq N$, the basic theory of ordinary differential equations implies that $f_m(\psi(Q,t))$ is also $C^1$ in both $Q$ and $t$.  Since $f_m(x)$ is the composition of the $C^1$ function $f_m(\psi(Q,t))$ with the $C^1$ maps $Q=\pi(x)$ and $t=r(x)$, $f_m$ must also lie in $C^1(U_P)$.  Furthermore, derivatives of $f_m(\psi(Q,t))$ with respect to $X_k$ for $1\leq k\leq n-1$ must satisfy the variational equation
  \[
    \frac{\partial}{\partial t}(X_k(f_m(\psi(Q,t))))=X_k(Xf_m(\psi(Q,t))).
  \]
  Expanding and simplifying with \eqref{eq:varphi_variational_equation}, we can obtain \eqref{eq:variational_equation}.  We omit the details.

  If we write $U=\bigcup_{p\in P}U_P$, we are done.
\end{proof}

The following is a simple consequence of the Whitney Extension Theorem.
\begin{lem}
\label{lem:Whitney}
  Let $M$ be a $C^2$ manifold of real dimension $n\geq 2$.  Let $\Omega\subset M$ be a relatively compact domain with $C^2$ boundary.  Let $X$ be a $C^1$ section of $T(M)$ that is transverse to $\partial\Omega$.  Given $f_0\in C^2(\partial\Omega)$, $f_1\in C^1(\partial\Omega)$, and $f_2\in C(\partial\Omega)$, then there exists $h\in C^2(M)$ such that $h|_{\partial\Omega}=f_0$, $Xh|_{\partial\Omega}=f_1$, and $X^2h|_{\partial\Omega}=f_2$.
\end{lem}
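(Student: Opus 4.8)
The plan is to reduce the statement to the Whitney Extension Theorem: I will construct an appropriate $2$-jet along $\partial\Omega$ that realizes the prescribed data, extend it locally via Whitney's theorem, and glue the local extensions with a partition of unity. Since $\partial\Omega$ is compact, it suffices to work in a finite cover by $C^2$ coordinate charts $U_i$ of $M$ (these exist because $M$ and $\partial\Omega$ are $C^2$) in which $\partial\Omega\cap U_i=\{x_n=0\}$, together with $U_0=M\setminus\partial\Omega$, shrinking the charts if necessary so that the relevant slices are relatively closed. On each $U_i$ write $X=\sum_{j=1}^n a^j\frac{\partial}{\partial x_j}$ with $a^j\in C^1(U_i)$; transversality of $X$ to $\partial\Omega$ forces $a^n\neq 0$ on $\partial\Omega\cap U_i$, and we may shrink $U_i$ so that $a^n$ is nowhere zero there.

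On $\partial\Omega\cap U_i$ I prescribe the $2$-jet $(P_\alpha)_{|\alpha|\le 2}$ of the desired extension as follows. The tangential components are forced by $f_0\in C^2$: set $P_0=f_0$, and for $1\le j,k\le n-1$ set $P_{e_j}=\frac{\partial f_0}{\partial x_j}$ and $P_{e_j+e_k}=\frac{\partial^2 f_0}{\partial x_j\partial x_k}$. The first normal component is determined by the requirement $Xh=f_1$ on $\partial\Omega$: solving $\sum_{j\le n-1}a^j\frac{\partial f_0}{\partial x_j}+a^n P_{e_n}=f_1$ gives
\[
  P_{e_n}=\frac{1}{a^n}\Bigl(f_1-\sum_{j=1}^{n-1}a^j\frac{\partial f_0}{\partial x_j}\Bigr),
\]
which lies in $C^1(\partial\Omega\cap U_i)$ because $a^j,f_1\in C^1$, $\frac{\partial f_0}{\partial x_j}\in C^1$, and $a^n\in C^1$ is nonvanishing; accordingly put $P_{e_j+e_n}=\frac{\partial}{\partial x_j}P_{e_n}$ for $1\le j\le n-1$, which is continuous. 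Finally, expanding $X^2h=\sum_{j,k}a^k\frac{\partial a^j}{\partial x_k}\frac{\partial h}{\partial x_j}+\sum_{j,k}a^j a^k\frac{\partial^2 h}{\partial x_j\partial x_k}$ on $\partial\Omega$ and isolating the $j=k=n$ term, the requirement $X^2h=f_2$ determines $P_{2e_n}$ as the unique solution of $(a^n)^2P_{2e_n}=f_2-(\text{terms already fixed})$; this is continuous, since the remaining terms involve only $\frac{\partial a^j}{\partial x_k}\in C^0$, the $P_{e_j}\in C^1$, and the $P_{e_j+e_k}$ with $(j,k)\neq(n,n)$, all continuous, divided by $(a^n)^2\neq 0$.

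The crucial point — and the main obstacle — is that $(P_\alpha)_{|\alpha|\le 2}$ is a Whitney field of class $C^2$ on the closed set $\{x_n=0\}$, despite $P_{e_n}$ being only $C^1$. The Whitney compatibility conditions compare jets at two points of $\{x_n=0\}$, i.e., along displacements with vanishing $n$-th coordinate, so they involve only the tangential derivatives of $P_0=f_0$ (a $C^2$ function), of $P_{e_j}$ and $P_{e_n}$ (which are $C^1$), and continuity of the $P_{e_j+e_k}$; each such condition then follows from Taylor's theorem for $C^2$, resp. $C^1$, functions, and the "bad" tangential second derivative of $P_{e_n}$ never enters. Applying the Whitney Extension Theorem in each chart yields $h_i\in C^2(U_i)$ whose $2$-jet along $\partial\Omega\cap U_i$ equals $(P_\alpha)$; set $h_0\equiv 0$, choose a $C^2$ partition of unity $\{\chi_i\}_{i=0}^N$ subordinate to $\{U_i\}$, and put $h=\sum_i\chi_i h_i\in C^2(M)$. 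Since $\chi_0$ vanishes on $\partial\Omega$ and the $h_i$ with $i\ge 1$ all share the same $2$-jet along $\partial\Omega$, a short computation using $\sum_i\chi_i\equiv 1$ (hence $\sum_i\partial^\gamma\chi_i\equiv 0$ for $|\gamma|\ge 1$) shows that $h$ has that same $2$-jet along $\partial\Omega$; by construction of the jet this gives $h|_{\partial\Omega}=f_0$, $Xh|_{\partial\Omega}=f_1$, and $X^2h|_{\partial\Omega}=f_2$. Everything beyond the Whitney-field verification is bookkeeping in local coordinates and a routine gluing argument.
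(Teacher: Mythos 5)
Your proof is correct and fills in exactly the details the paper omits: the explicit $2$-jet construction along $\partial\Omega$, the Whitney compatibility check (correctly observing that on $\{x_n=0\}$ the remainders involve only tangential displacements and hence only tangential Taylor expansions, so the merely $C^1$ normal datum $P_{e_n}$ causes no trouble), and the gluing by a partition of unity. The one point left implicit — that the locally-defined $2$-jets agree on chart overlaps, which is what makes the gluing work — follows because the constraints $h|_{\partial\Omega}=f_0$, $Xh|_{\partial\Omega}=f_1$, $X^2h|_{\partial\Omega}=f_2$ (together with their tangential derivatives) determine the $2$-jet at each boundary point in a coordinate-independent way, and this is exactly the observation the paper's one-line proof rests on.
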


\begin{proof}
It suffices to note that the hypotheses provide enough data to uniquely determine the $2$-jet of $h$ at any point $P\in\partial\Omega$, and it is not difficult to check that these $2$-jets must satisfy the hypotheses of the Whitney Extension Theorem.  We omit the details.

\end{proof}

Finally, we analyze regularization by convolution.
\begin{lem}
\label{lem:regularization}
  Let $M$ be a compact $C^2$ manifold of real dimension $n\geq 2$ equipped with a Riemannian metric.  For every $\epsilon>0$, there exists an integral kernel $K_\epsilon\in C^2(M\times M)$ taking non-negative values such that
  \begin{enumerate}
    \item $K_\epsilon(x,y)=0$ whenever $\dist(x,y)\geq\epsilon$,
    \item $\int_M K_\epsilon(x,y)dV_y=1$ for all $x\in M$,
    \item $\lim_{\epsilon\rightarrow 0^+}\int_M f(y)K_\epsilon(x,y)dV_y=f(x)$ for all $f\in C(M)$ and $x\in M$,
    \item $\lim_{\epsilon\rightarrow 0^+}X\int_M f(y)K_\epsilon(x,y)dV_y=Xf(x)$ for all $f\in C^1(M)$, $x\in M$, and continuous vector fields $X\in T(M)$, and
    \item if $\nabla$ is any affine connection on $T(M)$, then
  \begin{multline}
  \label{eq:second_derivative_convergence}
    \Hess_\nabla(X,Y)\int_M f(y)K_\epsilon(x,y)dV_y\\-\int_{\mathbb{R}^n}(Xf)(y)Y^*_y\left(K_\epsilon(x,y)\right)dV_y+\int_{\mathbb{R}^n}((\nabla_Y X)f)(y)K_\epsilon(x,y)dV_y\rightarrow 0
  \end{multline}
  as $\epsilon\rightarrow 0^+$ for all $f\in C^1(M)$, $x\in M$, and $C^1$ sections $X$ and $Y$ of $T(M)$.
  \end{enumerate}
\end{lem}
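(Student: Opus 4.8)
The plan is to build $K_\epsilon$ by gluing Euclidean mollifiers carried in coordinate charts and then renormalizing so that (2) holds identically; properties (1), (3), (4) will reduce to the classical fact that $K_\epsilon$ is an approximate identity, and (5) is the substantive assertion.

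For the construction I would fix a finite atlas $\{(U_a,\phi_a)\}_{a=1}^N$ of $C^2$ charts, functions $\chi_a\in C^2_0(U_a)$ with $\chi_a\geq 0$ and $\sum_a\chi_a\equiv 1$, and a standard mollifier $\rho\in C^\infty_0(\mathbb{R}^n)$ with $\rho\geq 0$, $\supp\rho\subset B(0,1)$, $\int\rho=1$. For small $\delta>0$ and $x,y\in U_a$ set $\tilde K^a_\delta(x,y)=\delta^{-n}\rho\bigl((\phi_a(x)-\phi_a(y))/\delta\bigr)$; this is $C^2$ on $U_a\times U_a$ and, since $\phi_a$ is bi-Lipschitz on compacta, supported in $\{\dist(x,y)<C_a\delta\}$. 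Because $\tilde K^a_\delta$ is $C^2$ in $x$ with compact support, $c^a_\delta(x):=\int_{U_a}\tilde K^a_\delta(x,y)\,dV_y$ is $C^2$ (differentiate under the integral, even though the density of $dV_y$ is only continuous) and is bounded below for $\delta$ small; hence $K^a_\delta:=\tilde K^a_\delta/c^a_\delta$ is non-negative, lies in $C^2(U_a\times U_a)$, and satisfies $\int_{U_a}K^a_\delta(x,\cdot)\,dV=1$. Choosing $\delta_a=\delta_a(\epsilon)$ small enough that $C_a\delta_a<\epsilon$ and putting $K_\epsilon(x,y)=\sum_a\chi_a(x)K^a_{\delta_a}(x,y)$ gives a non-negative kernel; each summand is $C^2$ and compactly supported in $U_a\times U_a$, hence extends by zero to $C^2(M\times M)$, so $K_\epsilon\in C^2(M\times M)$. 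Property (1) is immediate, and $\int K_\epsilon(x,\cdot)\,dV=\sum_a\chi_a(x)=1$ is (2).

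For (3), $\int f(y)K^a_\delta(x,y)\,dV_y\to f(x)$ uniformly in $x$ by uniform continuity of $f$ on compact $M$ together with $\int K^a_\delta(x,\cdot)\,dV=1$; summing against $\sum\chi_a=1$ gives the claim. For (4), set $u_\epsilon(x)=\int f(y)K_\epsilon(x,y)\,dV_y\in C^2(M)$; differentiating under the integral and subtracting $f(x)X_x\!\int K_\epsilon(x,y)\,dV_y=0$ reduces (4) to $\int\bigl(f(y)-f(x)\bigr)X_xK_\epsilon(x,y)\,dV_y\to(Xf)(x)$. In a chart, $X_xK_\epsilon$ differs from $-X_yK_\epsilon$ (the field frozen at $x$ and differentiated in $y$) by terms smaller by a factor $\dist(x,y)$ on the support — coming from the non-affineness of $\phi_a$, the $x$-dependence of $\chi_a/c^a_{\delta_a}$, and $X(x)-X(y)$ — whose pairing with $f(y)-f(x)=O(\dist(x,y))$ is $o(1)$, while the remaining term yields $(Xf)(x)$ after one integration by parts in $y$ (valid for $f\in C^1$). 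Since (3) and (4) depend linearly on $f$ and on $X$ over $C^1$ functions, the cases of arbitrary $C^1$ sections follow from that of coordinate fields.

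Property (5) is the heart of the matter and the only place where $f\in C^1$ rather than $C^2$ is genuinely delicate: $\Hess_\nabla(X,Y)u_\epsilon$ carries two $x$-derivatives of $K_\epsilon$, and the right-hand side of (5) is arranged so that, although its separate terms need not converge, the combination converges — and to $0$. I would prove this by an exact computation in a chart: write $\Hess_\nabla(X,Y)u_\epsilon(x)=\int f(y)\bigl(\Hess_\nabla(X,Y)\bigr)_xK_\epsilon(x,y)\,dV_y$, use $\partial_{x_i}K_\epsilon=-\partial_{y_i}K_\epsilon+O(\dist(x,y)\,\delta^{-n})$ and $\partial_{x_i}\partial_{x_j}K_\epsilon=\partial_{y_i}\partial_{y_j}K_\epsilon+O(\dist(x,y)\,\delta^{-n-1})$ on the support to trade the second-order $x$-operator for a second-order $y$-operator plus first-order corrections, then integrate by parts \emph{once} in $y$ — placing exactly one derivative on $f$, which is why the adjoint $Y^\ast$, not $Y$, appears. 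The principal surviving term is a commutator of two derivatives acting on the smooth $u_\epsilon$ (equivalently a mixed partial of $K_\epsilon$ in $y$) and cancels; the first-order remainders — from the Christoffel symbols of $\nabla$, the density of $dV_y$, the normalizing factor $\chi_a/c^a_{\delta_a}$, and $X(x)-X(y)$ — must assemble, after that single integration by parts and passage to the limit, precisely into $\int(Xf)(y)Y^\ast_yK_\epsilon(x,y)\,dV_y-\int((\nabla_YX)f)(y)K_\epsilon(x,y)\,dV_y$, with nothing left over. The main obstacle is exactly this bookkeeping: showing that the collection of first-order error terms matches the two correction integrals identically, i.e., that the elementary flat-space, trivial-connection identity — where the left side of (5) is literally $0$ — survives, up to $o(1)$, the passage to a curved metric and a general affine connection. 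A useful intermediate check is the case $X=hX_0$ with $X_0$ a coordinate field: the apparent failure of $C^1(M)$-linearity of the left side of (5) in $X$ is compensated exactly by the $(Yh)X_0$ term inside $\nabla_Y(hX_0)$ together with the factor $h(x)-h(y)$ produced inside $\int(hX_0f)Y^\ast_yK_\epsilon\,dV_y$, and verifying this cancellation is a faithful miniature of the general argument.
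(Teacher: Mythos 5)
Your construction is essentially the paper's: a translation-invariant Euclidean mollifier carried in a chart, normalized by the $dV_y$-integral, and glued with a partition of unity in the $x$-variable. Properties (1)--(3) are handled correctly. However, there is a genuine gap in the error estimates starting at (4) that becomes fatal at (5), and you explicitly acknowledge not having closed (5) ("the main obstacle is exactly this bookkeeping").

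The flawed step is the claim that $X_xK_\epsilon$ differs from $-X_yK_\epsilon$ (coefficients frozen at $x$) "by terms smaller by a factor $\dist(x,y)$ on the support," citing in particular "the $x$-dependence of $\chi_a/c^a_{\delta_a}$." That is not true. Writing $K_\epsilon=\tilde K_\epsilon/c_\epsilon(x)$ with $\tilde K_\epsilon$ exactly translation-invariant in the chart coordinates, one finds $\partial_{x_i}K_\epsilon+\partial_{y_i}K_\epsilon=-K_\epsilon\,\partial_{x_i}c_\epsilon(x)/c_\epsilon(x)$, and since $\partial_{x_i}c_\epsilon/c_\epsilon\to\partial_ih/h$ (the logarithmic derivative of the $C^1$ volume density), this correction is a genuine $O(\delta^{-n})$ term \emph{without} the claimed extra factor of $\dist(x,y)$. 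The same is true of the $\partial_{x_i}\chi_a$ terms. What actually makes the argument work is not a pointwise smallness of this correction, but an exact algebraic cancellation: $\partial_{x_j}K_\epsilon=\bigl(\partial_{y_j}\bigr)^{\!*}K_\epsilon+E_j^hK_\epsilon$, where $\bigl(\partial_{y_j}\bigr)^{\!*}=-\partial_{y_j}-h^{-1}\partial_jh$ is the $L^2(dV)$-adjoint, and $E_j^h(x,y)=h(y)^{-1}\partial_jh(y)-c_\epsilon(x)^{-1}\partial_jc_\epsilon(x)$ is the \emph{difference} between the divergence of the adjoint and the normalizer's logarithmic derivative. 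This difference is $o(1)$ (not $O(\dist(x,y))$), and its derivatives are $o(\epsilon^{-1})$, but crucially one also has the exact Stokes identity $\int E_j^h(x,y)K_\epsilon(x,y)\,dV_y=0$, which is what allows one to replace $f(y)$ by $f(y)-f(x)$ in the residual integrals arising at second order. Your (4) conclusion happens to be safe because the normalizer error factors out of the $y$-integral and $\int(f(y)-f(x))K_\epsilon\,dV_y\to0$ by (3), but your stated reason (pointwise smallness of the error) is wrong.

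For (5) this is not a cosmetic issue. After trading one $x$-derivative for a $y$-adjoint and integrating by parts once, the second $x$-derivative lands on the $E_j^h K_\epsilon$ correction; the resulting terms are of size $\partial_{x_k}E_j^h\cdot K_\epsilon=o(\epsilon^{-1})\cdot O(\epsilon^{-n})$ and $E_j^h\cdot\partial_{x_k}K_\epsilon=o(1)\cdot O(\epsilon^{-n-1})$, and they are only controllable because (i) the Stokes identity lets you pair them against $f(y)-f(x)=O(\epsilon)$, and (ii) the difference structure of $E_j^h$ (adjoint divergence minus normalizer derivative) makes $E_j^h$ and $\partial_xE_j^h$ decay. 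Neither (i) nor (ii) is visible in your framework, where you treat the normalizer contribution as a term that is already smaller by a factor $\dist(x,y)$. Also, the "non-affineness of $\phi_a$" error you mention does not arise if the computation is done in chart coordinates, since the kernel is exactly translation-invariant there; the real obstructions are the normalizer, the volume density, and the partition-of-unity derivatives, and they all require the cancellation mechanism just described rather than a naive size estimate.
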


\begin{proof}
  Note that (1) and (2) always imply (3).  With this in mind, it is straightforward to check that we can construct $K_\epsilon(x,y)$ locally and use a partition of unity (with respect to the variable $x$) to construct a global kernel.

  Henceforth, we assume that $f$ is supported in some local coordinate chart.  On this coordinate chart, there must exist some constant $C>0$ such that $|x-y|\leq C\dist(x,y)$.  Define $\chi\in C^2(\mathbb{R})$ by $\chi(t)=(1-t^2)^3$ when $|t|<1$ and $\chi(t)=0$ when $|t|\geq 1$.  Set $\tilde K_\epsilon(x,y)=\chi(\epsilon^{-1}C^{-1}|x-y|)/\int_{\mathbb{R}^n}\chi(\epsilon^{-1}C^{-1}|z|)dz_1\ldots dz_n$.  Then $\tilde K_\epsilon(x,y)$ satisfies (1) and
  \begin{equation}
  \label{eq:tilde_K_integral}
    \int_{\mathbb{R}^n}\tilde K_\epsilon(x,y)dy_1\ldots dy_n=1\text{ for all }x\in\mathbb{R}^n.
  \end{equation}
  Furthermore, we have
  \begin{equation}
  \label{eq:tilde_K_derivative_relation}
    \frac{\partial}{\partial x_j}\tilde K_\epsilon(x,y)+\frac{\partial}{\partial y_j}\tilde K_\epsilon(x,y)\equiv 0
  \end{equation}
  for all $1\leq j\leq n$.  Differentiating \eqref{eq:tilde_K_integral} gives us
  \begin{equation}
  \label{eq:tilde_K_derivative_identity}
    \int_{\mathbb{R}^n}\frac{\partial}{\partial x_j}\tilde K_\epsilon(x,y)dy_1\ldots dy_n=0
  \end{equation}
  for all $1\leq j\leq n$.  We may directly estimate
  \begin{equation}
  \label{eq:tilde_K_derivative_estimate}
    \int_{\mathbb{R}^n}\abs{\frac{\partial}{\partial x_j}\tilde K_\epsilon(x,y)dy_1\ldots dy_n}\leq O\left(\frac{1}{\epsilon}\right)
  \end{equation}
  for $\epsilon>0$ sufficiently small.

  Note that we may write $dV_y=h(y)dy_1\ldots dy_n$ for some positive-valued $C^1$ function $h$.  Set $h_\epsilon(x)=\int_{\mathbb{R}^n}\tilde K_\epsilon(x,y)dV_y$.  Since $K_\epsilon(x,y)$ is $C^2$ with respect to $x$, $h_\epsilon$ is also $C^2$.  Moreover, $\lim_{\epsilon\rightarrow 0^+}h_\epsilon(x)=h(x)$.  If we make the usual change of coordinates for convolutions, we have
  \[
    h_\epsilon(x)=\epsilon^n\int_{\mathbb{R}^n}\tilde K_\epsilon(x,x+\epsilon z)h(x+\epsilon z)dz_1\ldots dz_n.
  \]
  Since $\tilde K_\epsilon(x,x+\epsilon z)$ is actually independent of $x$, we have
  \[
    \frac{\partial h_\epsilon}{\partial x_j}(x)=\epsilon^n\int_{\mathbb{R}^n}\tilde K_\epsilon(x,x+\epsilon z)\frac{\partial h}{\partial x_j}(x+\epsilon z)dz_1\ldots dz_n
  \]
  for all $1\leq j\leq n$, or
  \[
    \frac{\partial h_\epsilon}{\partial x_j}(x)=\int_{\mathbb{R}^n}\tilde K_\epsilon(x,y)\frac{\partial h}{\partial y_j}(y)dy_1\ldots dy_n,
  \]
  so
  \begin{equation}
  \label{eq:h_derivative_convergence}
    \lim_{\epsilon\rightarrow 0^+}\frac{\partial h_\epsilon}{\partial x_j}(x)=\frac{\partial h}{\partial x_j}(x)\text{ for all }1\leq j\leq n.
  \end{equation}
  For $1\leq j,k\leq n$, we have
  \[
    \frac{\partial^2 h_\epsilon}{\partial x_j\partial x_k}(x)=\int_{\mathbb{R}^n}\frac{\partial}{\partial x_k}\tilde K_\epsilon(x,y)\frac{\partial h}{\partial y_j}(y)dy_1\ldots dy_n,
  \]
  but then \eqref{eq:tilde_K_derivative_identity} gives us
  \[
    \frac{\partial^2 h_\epsilon}{\partial x_j\partial x_k}(x)=\int_{\mathbb{R}^n}\frac{\partial}{\partial x_k}\tilde K_\epsilon(x,y)\left(\frac{\partial h}{\partial y_j}(y)-\frac{\partial h}{\partial y_j}(x)\right)dy_1\ldots dy_n,
  \]
  so continuity of $\frac{\partial h}{\partial y_j}$ and \eqref{eq:tilde_K_derivative_estimate} give us
  \begin{equation}
  \label{eq:h_hessian_convergence}
    \abs{\frac{\partial^2 h_\epsilon}{\partial x_j\partial x_k}(x)}\leq o\left(\frac{1}{\epsilon}\right)\text{ as }\epsilon\rightarrow 0^+.
  \end{equation}

  We define $K_\epsilon(x,y)=\tilde K_\epsilon(x,y)/h_\epsilon(x)$.  Clearly we have (1) and (2).  For $1\leq j\leq n$, \eqref{eq:tilde_K_derivative_relation} implies
  \[
    \frac{\partial}{\partial x_j}K_\epsilon(x,y)=-\frac{\partial}{\partial y_j}K_\epsilon(x,y)-K_\epsilon(x,y)\frac{1}{h_\epsilon(x)}\frac{\partial h_\epsilon}{\partial x_j}(x)
  \]
  for all $1\leq j\leq n$.  Since $\left(\frac{\partial}{\partial y_j}\right)^*=-\frac{\partial}{\partial y_j}-\frac{1}{h(y)}\frac{\partial h}{\partial y_j}(y)$, we have
  \begin{equation}
  \label{eq:X_K}
    \frac{\partial}{\partial x_j}K_\epsilon(x,y)=\left(\frac{\partial}{\partial y_j}\right)^*K_\epsilon(x,y)
    +E_j^h(x,y)K_\epsilon(x,y),
  \end{equation}
  where $E_j^h(x,y)=\frac{1}{h(y)}\frac{\partial h}{\partial y_j}(y)-\frac{1}{h_\epsilon(x)}\frac{\partial h_\epsilon}{\partial x_j}(x)$.  Note that \eqref{eq:h_derivative_convergence} gives us
  \begin{equation}
  \label{eq:E_h_estimate}
    \lim_{\epsilon\rightarrow 0^+}\sup_{\{x,y\in\mathbb{R}^n:\dist(x,y)\leq\epsilon\}}|E_j^h(x,y)|=0\text{ for all }1\leq j\leq n
  \end{equation}
  and \eqref{eq:h_hessian_convergence} gives us
  \begin{equation}
  \label{eq:E_h_derivative_estimate}
    \lim_{\epsilon\rightarrow 0^+}\sup_{\{x,y\in\mathbb{R}^n:\dist(x,y)\leq\epsilon\}}\epsilon\abs{\frac{\partial}{\partial x_k}E_j^h(x,y)}=0\text{ for all }1\leq j,k\leq n
  \end{equation}
  Since $\int_{\mathbb{R}^n}K_\epsilon(x,y)dV_y$ is constant, we have
  \[
    \int_{\mathbb{R}^n}\frac{\partial}{\partial x_j}K_\epsilon(x,y)dV_y=\frac{\partial}{\partial x_j}\left(\int_{\mathbb{R}^n}K_\epsilon(x,y)dV_y\right)=0.
  \]
  Stokes' Theorem immediately gives us
  \begin{equation}
  \label{eq:Stokes_consequence}
    \int_{\mathbb{R}^n}\left(\frac{\partial}{\partial y_j}\right)^*K_\epsilon(x,y)dV_y=0,
  \end{equation}
  so \eqref{eq:X_K} implies
  \begin{equation}
  \label{eq:vanishing_integral}
    \int_{\mathbb{R}^n}E_j^h(x,y)K_\epsilon(x,y)dV_y=0\text{ for all }1\leq j\leq n.
  \end{equation}

  Let $f\in C^1(M)$ be supported in our local coordinate chart, and set $f_\epsilon(x)=\int_{\mathbb{R}^n}f(y)K_\epsilon(x,y)dV_y$.  Using \eqref{eq:X_K}, we have
  \[
    \frac{\partial f_\epsilon}{\partial x_j}(x)=\int_{\mathbb{R}^n}f(y)\left(\frac{\partial}{\partial y_j}\right)^*K_\epsilon(x,y)dV_y
    +\int_{\mathbb{R}^n}f(y)E_j^h(x,y)K_\epsilon(x,y)dV_y.
  \]
  Integrating by parts in the first integral and using \eqref{eq:vanishing_integral} to rewrite the second integral, we have
  \[
    \frac{\partial f_\epsilon}{\partial x_j}(x)=\int_{\mathbb{R}^n}\frac{\partial f}{\partial y_j}(y)K_\epsilon(x,y)dV_y
    +\int_{\mathbb{R}^n}(f(y)-f(x))E_j^h(x,y)K_\epsilon(x,y)dV_y.
  \]
  From \eqref{eq:E_h_estimate} we obtain $\frac{\partial f_\epsilon}{\partial x_j}(x)\rightarrow \frac{\partial f}{\partial x_j}(x)$ as $\epsilon\rightarrow 0^+$ for all $1\leq j\leq n$.

  For $1\leq j,k\leq n$, we may differentiate again and use \eqref{eq:X_K} and \eqref{eq:vanishing_integral} to simplify and obtain
  \begin{multline*}
    \frac{\partial^2 f_\epsilon}{\partial x_j\partial x_k}(x)=\int_{\mathbb{R}^n}\frac{\partial f}{\partial y_j}(y)\left(\frac{\partial}{\partial y_k}\right)^*K_\epsilon(x,y)dV_y
    +\int_{\mathbb{R}^n}\frac{\partial f}{\partial y_j}(y)E_j^h(x,y)K_\epsilon(x,y)dV_y\\
    +\int_{\mathbb{R}^n}(f(y)-f(x))\left(\frac{\partial}{\partial x_k}E_j^h(x,y)K_\epsilon(x,y)+E_j^h(x,y)\frac{\partial}{\partial x_k}K_\epsilon(x,y)\right)dV_y.
  \end{multline*}
  Since $|f(y)-f(x)|\leq O(\epsilon)$ when $\dist(x,y)\leq\epsilon$, we may use \eqref{eq:tilde_K_derivative_estimate}, \eqref{eq:E_h_estimate}, and \eqref{eq:E_h_derivative_estimate} to show
  \[
    \abs{\frac{\partial^2 f_\epsilon}{\partial x_j\partial x_k}(x)-\int_{\mathbb{R}^n}\frac{\partial f}{\partial y_j}(y)\left(\frac{\partial}{\partial y_k}\right)^*K_\epsilon(x,y)dV_y}\rightarrow 0
  \]
  as $\epsilon\rightarrow 0^+$ for all $1\leq j,k\leq n$.

  Passing from the Hessian in local coordinates to the invariant Hessian with respect to arbitrary $C^1$ sections is a straightforward computation, and we omit the details.

\end{proof}

\bibliographystyle{amsplain}
\bibliography{harrington}
\end{document}